\documentclass[11pt,reqno]{amsart}

\usepackage{a4wide}
\usepackage[english]{babel}
\usepackage[utf8]{inputenc}
\usepackage{amsmath,amssymb}
\usepackage[hidelinks]{hyperref}
\usepackage{tikz}
\usepackage{dsfont}
\usepackage{xcolor}

\theoremstyle{definition}
\newtheorem{thm}{Theorem}[section]
\newtheorem{lem}[thm]{Lemma}
\newtheorem{prop}[thm]{Proposition}
\newtheorem{cor}[thm]{Corollary}
\newtheorem{defi}[thm]{Definition}
\newtheorem{rem}[thm]{Remark}
\numberwithin{equation}{section}

\newcommand{\dom}{\operatorname{dom}}
\newcommand{\ran}{\operatorname{ran}}
\newcommand{\sgn}{\operatorname{sgn}}
\newcommand{\Sc}{\operatorname{Sc}}
\renewcommand{\Im}{\operatorname{Im}}
\newcommand{\Arg}{\operatorname{Arg}}
\newcommand{\poly}{{\text{poly}}}
\newcommand{\rest}[1]{\hspace{-0.1cm}\upharpoonright_{#1}}

\title[]{Spectral projectors of bisectorial Clifford operators and applications to the generalized gradient}

\author[Francesco Mantovani]{Francesco Mantovani}
\address{(FM) Politecnico di Milano, Dipartimento di Matematica, Via E. Bonardi 9, 20133 Milano, Italy}
\email{francesco.mantovani@polimi.it}

\author[Peter Schlosser]{Peter Schlosser}
\address{(PS) Graz University of Technology, Institute of Applied Mathematics, Steyrergasse 30, 8010 Graz, Austria}
\email{pschlosser@math.tugraz.at}

\begin{document}

\begin{abstract}
We consider right-linear operators $T$ on a right Hilbert module $V$ over the Clifford algebra $\mathbb{R}_n$, whose $S$-spectrum lies in an acute double sector. For these bisectorial operators, we introduce spectral projectors $P_\pm$ associated with the two cones of the double sector. They decompose the Hilbert module into two submodules $V=V_+\overset{\bullet}{+}V_-$, and the bisectorial operator $T$ into two sectorial operators $T\rest{V_\pm}$. A crucial but non-trivial, cornerstone in this theory is the boundedness of the projectors $P_\pm$, which is, in turn connected to a bounded $H^\infty$-functional calculus of the operator $T$. We provide two practical criteria: either the squared operator admits a bounded $H^\infty$-functional calculus, or the operator is $m$-accretive. Finally, we apply these results to the gradient operator $\nabla_a$ with nonconstant coefficients. For the particular gradient with constant coefficients, we are even able to derive explicit representations of the submodules $V_\pm$ and the projectors $P_\pm$ in Fourier space. Moreover, we identify the sign of the gradient operator with the Clifford-Hilbert transform. This sign plays a central role in the fractional powers of vector operators, which are used, for instance, in the non-local Fourier law of heat propagation.
\end{abstract}

\maketitle

AMS Classification: 47A10, 47A60. \medskip

Keywords: $S$-spectrum, Intrinsic $S$-functional calculus, $H^\infty$-functional calculus, Fractional powers of vector operators, Spectral projectors.

\section{Introduction}

Spectral theory on the $S$-spectrum is by now a well-established framework for quaternionic and Clifford linear operators; see the monographs \cite{FJBOOK,CGK,ColomboSabadiniStruppa2011} and the references therein. Historically, its development was motivated by the quaternionic formulation of quantum mechanics \cite{Adler1995,Birkhoff1936,Emch1963,Finkelstein1962,Horwitz1984}, which requires a spectral theorem for normal operators on quaternionic Hilbert modules \cite{ACK,ColKim,Farenick2003,Sharma1987,Viswanath1971}, and the theory of slice hyperholomorphic functions \cite{ADVCGKS}. One of the most important facts about this theory is that it contains the spectral theory of vector operators, such as the gradient operator \cite{6Global,Gradient,CMS26}, as well as Dirac-type operators \cite{DIRACHYPSPHE,DSS,DiracHarm}. The particular motivation for our general theory, as well as the precise application in Section~\ref{sec_Gradient}, is the gradient with nonconstant coefficients, written as a Clifford scalar operator
\begin{equation*}
\nabla_a=\sum_{i=1}^ne_ia_i(x)\frac{\partial}{\partial x_i}.
\end{equation*}
This gradient turns out to be a bisectorial operator in the Clifford module $L^2(\mathbb{R}^n,\mathbb{R}_n)$, and the holomorphic functional calculus based on the $S$-spectrum allows us to define its fractional powers, see \cite[Section~4]{CMS26}. \medskip

The physical motivation behind fractional powers of the gradient operator is to obtain a non-local Fourier law of heat propagation. Denoting by $v:\mathbb{R}^n\times[0,\infty)\to\mathbb{R}$ the temperature, and by $q:\mathbb{R}^n\times[0,\infty)\rightarrow\mathbb{R}^n$ the heat flow, the classical heat equation $\frac{\partial}{\partial t}v-\Delta v=0$ is deduced from the two laws
\begin{align*}
&q(x,t)=-\nabla v(x,t),\hspace{1.78cm}\text{(Fourier's law)}, \\
&\tfrac{\partial}{\partial t}v(x,t)+\operatorname{div}q(x,t)=0,\qquad\text{(Conservation of energy)}.
\end{align*}
The usual fractional heat equation $\frac{\partial}{\partial t}v+(-\Delta)^\alpha v=0$, $\alpha\in(0,1)$, takes non-local interactions into account by replacing the negative Laplacian with its fractional power. The extensive literature on fractional operators builds upon fundamental works such as \cite{B60,G76,G78a,G78b} and \cite{K60,K66,K67,K69,W61,Y60}. \medskip

Our approach however, is different: we replace the gradient in Fourier's law by its fractional power $p_\alpha(\nabla)$, see \cite{CMS26}, and insert the resulting non-local Fourier law into the unchanged conservation of energy. This keeps the evolution equation in divergence form, so that weak solutions are immediately defined, and it applies to the whole class of gradients $\nabla_a$ with nonconstant coefficients, which model non-homogeneous materials, on bounded as well as on unbounded domains. \medskip

This article is a natural continuation of the recent paper \cite{CMS26} on the fractional powers of bisectorial operators. Here, we investigate the spectral projectors associated with the two cones of the $S$-spectrum of bisectorial Clifford operators. The reason why these particular projectors are important is that in \cite[Theorem~3.28]{CMS26} we found that the connection between the two approaches -- taking fractional powers of the gradient (non-local Fourier's law) and taking fractional powers of the Laplacian (non-local full heat equation) -- is exactly the operator $\sgn(\nabla_a)$. The sign function is constant on each of the two cones of the $S$-spectrum of $\nabla_a$, and so it is naturally connected to the spectral projectors, which are associated with the characteristic functions on the respective cones. \medskip

Since the study of these projectors requires the vector spaces to be one-sided, we employ the intrinsic functional calculus in the spirit of J. Gantner from the AMS memoir \cite{G20}. Using the right multiplication operator $\mathcal{I}^Rs$ from \eqref{Eq_IR}, we found an efficient way to write down the functional calculus of \cite[Definition~3.11]{G20} in the form of our Definition~\ref{defi_omega_Hinfty}~i). Moreover, we extended this $\omega$-functional calculus to an $H^\infty$-functional calculus for polynomially bounded functions in Definition~\ref{defi_omega_Hinfty}~ii). \medskip

The spectral projectors in our theory will be defined via the $H^\infty$-functional calculus \cite{MS24} of the characteristic function of the operator $T$. It is a crucial point of that theory, also in the complex case, that the operator $f(T)$ is not necessarily bounded, even for bounded functions $f$, see \cite{MY90}, \cite[Section~8]{M86}, or \cite[Theorem~10.4.21]{HYTONBOOK2}. However, the projectors need to be bounded, which means that the underlying operator $T$ has to have a bounded $H^\infty$-functional calculus. Although, in \cite{CMS25}, we already gave a characterization of bounded $H^\infty$-functional calculi in terms of quadratic estimates, in this article we will give more accessible sufficient conditions on the operator $T$ for it to have this property. In particular we will show that every $m$-accretive operator has a bounded $H^\infty$-functional calculus, and that $T$ has a bounded $H^\infty$-functional calculus if and only if $T^2$ has one. \medskip

\textit{Main results of this paper.} \medskip

(I) In Section~\ref{sec_Accretive_operators}, we derive a criterion for the boundedness of the $H^\infty$-functional calculus. It is known from \cite[Proposition~3.26]{CMS26} that the square of a bisectorial operator of angle $\omega$ is sectorial of angle $2\omega$. In Lemma~\ref{lem_FC_T2}, we extend this comparison to the level of the functional calculi. More precisely, every function $f$ on the sector $S_{2\theta}$ induces the function $\widehat{f}(s):=f(s^2)$ on the double sector $D_\theta$, and the two functional calculi are connected by
\begin{equation*}
f(T^2)=\widehat{f}(T).
\end{equation*}
As a consequence, Theorem~\ref{thm_Bounded_FC_T2} states that $T$ has a bounded $H^\infty$-functional calculus if and only if $T^2$ has one. This reduces a question about bisectorial operators to a question about sectorial ones, where positivity arguments become available. \medskip

Another class of operators admitting a bounded $H^\infty$-functional calculus are the $m$-accretive operators from Definition~\ref{defi_Accretive}. It is proved in Theorem~\ref{thm_Accretive_boundedFC}, that these operators are always $\frac{\pi}{2}$-sectorial and satisfy the contractivity estimate
\begin{equation*}
\Vert f(T)\Vert\leq\Vert f\Vert_{\infty,S_\theta},\qquad\theta\in(\tfrac{\pi}{2},\pi).
\end{equation*}
In the complex setting, this result is classical. The proof given here is intrinsic to the Clifford module and is based on the $S$-functional calculus of the Clifford adjoint operator \cite{ADJOINT}. Combining the two statements, we obtain a criterion used throughout the rest the paper: if $T$ is injective, bisectorial and $T^2$ is $m$-accretive, then $T$ has a bounded $H^\infty$-functional calculus. \medskip

(II) In Section~\ref{sec_Projectors}, we define the spectral projectors of bisectorial operators. A double sector $D_\theta$ splits into two disjoint sectors $S_\theta$ and -$S_\theta$, and correspondingly, every intrinsic function splits into $f_\pm$, see \eqref{Eq_fpm}. On the operator level, this splitting is realized by the spectral projectors
\begin{equation*}
P_\pm:=\mathds{1}_{\pm S_\theta}(T),
\end{equation*}
defined as the $H^\infty$-functional calculus of the characteristic functions $\mathds{1}_{\pm S_\theta}$ of the two sectors. If the $H^\infty$-functional calculus of $T$ is bounded, which we always assume in this section, these spectral projectors have exactly the properties of their complex counterparts, see Lemma~\ref{lem_P_properties}. Moreover, their ranges $V_\pm:=\ran P_\pm$ induce the direct sum decomposition $V=V_+\overset{\bullet}{+} V_-$ into invariant subspaces, see Lemma~\ref{lem_V_decomposition}. For the restrictions $T_\pm:=T\rest{V_\pm}$, we also prove in Theorem~\ref{thm_Spectrum_restriction} the spectral splitting
\begin{equation*}
\sigma_S(T_\pm)\setminus\{0\}=\sigma_S(T)\cap\mathbb{R}^{n+1}_\pm.
\end{equation*}
In other words, a bisectorial operator with bounded $H^\infty$-functional calculus decomposes into two sectorial operators, one with $S$-spectrum in $\overline{S_\omega}$ and one with $S$-spectrum in $-\overline{S_\omega}$. \medskip

(III) In Section~\ref{sec_Gradient}, we apply our results to the generalized gradient operator $\nabla_a$ in \eqref{Eq_Gradient}. In Theorem~\ref{thm_Bounded_FC_gradient}, we prove that $\nabla_a$ is injective, bisectorial and admits a \textit{bounded} $H^\infty$-functional calculus. In particular, the projectors $P_\pm$ of the gradient are well defined. Moreover, in \eqref{Eq_palpha_gradient} we obtain a factorization of the fractional powers of the gradient into the fractional Laplacian and the bounded sign operator
\begin{equation}\label{Eq_palpha_decomposition}
p_\alpha(\nabla_a)=(\nabla_a^2)^{\frac{\alpha}{2}}\sgn(\nabla_a),\qquad\alpha\in\mathbb{R}.
\end{equation}
This is the precise link between the fractional powers of the vector operator $\nabla_a$ introduced in \cite{CMS26}, and the classical fractional powers of the second-order operator $\nabla_a^2$. \medskip

For the special case of the gradient with constant coefficients, we even calculate in Theorem~\ref{thm_Fourier_P} an explicit representation of the projectors in Fourier space, namely
\begin{equation*}
F[P_\pm u](\xi)=\frac{1}{2}\Big(1\pm\frac{\mathbf{i}\,\xi_a}{|\xi_a|}\Big)F[u](\xi),\qquad u\in L^2(\mathbb{R}^n,\mathbb{R}_n),\,\xi\in\mathbb{R}^n.
\end{equation*}
Also the sign operator in \eqref{Eq_palpha_decomposition} admits the following explicit representation in Fourier space
\begin{equation*}
F[\sgn(\nabla_a)u](\xi)=\frac{\mathbf{i}\,\xi_a}{|\xi_a|}F[u](\xi),\qquad u\in L^2(\mathbb{R}^n,\mathbb{R}_n),\,\xi\in\mathbb{R}^n.
\end{equation*}
This sign operator can also be interpreted as the Clifford Hilbert transform \cite{Bernstein,QianYang2009Hilbert}. Since $\sgn(\nabla_a)$ is bounded and boundedly invertible, the identity \eqref{Eq_palpha_decomposition} also determines the domains of the fractional powers, which, for the gradient, turn out to be the usual Sobolev spaces $H^\alpha(\mathbb{R}^n)$ for $\alpha\geq 0$.

\section{Preliminaries on Clifford Algebras and Clifford modules}

In this section we will fix the algebraic and functional analytic setting of this paper. The underlying algebra will be the real \textit{Clifford algebra} $\mathbb{R}_n$, over $n$ \textit{imaginary units} $e_1,\dots,e_n$, which satisfy the relations
\begin{equation*}
e_i^2=-1\qquad\text{and}\qquad e_ie_j=-e_je_i,\qquad i\neq j\in\{1,\dots,n\}.
\end{equation*}
A special subclass of $\mathbb{R}_n$ will be the so called \textit{paravectors}
\begin{equation*}
\mathbb{R}^{n+1}:=\big\{x_0+x_1e_1+\dots+x_ne_n\;\big|\;x_0,x_1,\dots,x_n\in\mathbb{R}\big\}.
\end{equation*}
Moreover, we will define the sphere of \textit{imaginary units} as
\begin{equation*}
\mathbb{S}:=\big\{x_1e_1+\dots+x_ne_n\;\big|\;x_1^2+\dots+x_n^2=1\big\},
\end{equation*}
and for every imaginary unit $J\in\mathbb{S}$, the corresponding \textit{complex hyperplane}
\begin{equation*}
\mathbb{C}_J:=\big\{x+Jy\;\big|\;x,y\in\mathbb{R}\}.
\end{equation*}

\begin{defi}[Right Hilbert module]
Let $V$ be a real vector space, which possesses an additional \textit{right-multiplication} $\,\cdot:V\times\mathbb{R}_n\rightarrow V$, which extends the scalar multiplication by real numbers to Clifford numbers, and satisfy the following properties:
\begin{enumerate}
\item[$\circ$] $(v+w)s=vs+ws$,
\item[$\circ$] $v(s+p)=vs+vp$,\hspace{1cm} $v,w\in V$, $s,p\in\mathbb{R}_n$.
\item[$\circ$] $v(sp)=(vs)p$,
\end{enumerate}
Moreover, we equip this vector space with a \textit{Clifford valued inner product} $\langle\cdot,\cdot\rangle:V\times V\rightarrow\mathbb{R}_n$, that is a $\mathbb{R}$-bilinear Clifford valued form which is compatible with the right multiplication,
\begin{enumerate}
\item[$\circ$] $\langle vs,w\rangle=\overline{s}\langle v,w\rangle$,
\item[$\circ$] $\langle v,ws\rangle=\langle v,w\rangle s$,\hspace{1cm} $v,w\in V$, $s\in\mathbb{R}_n$.
\item[$\circ$] $\overline{\langle v,w\rangle}=\langle w,v\rangle$,
\end{enumerate}
In this setting, if $V$, equipped with the scalar part $\Sc\langle\cdot,\cdot\rangle:V\times V\rightarrow\mathbb{R}$, is a real Hilbert space, then we call $V$ a \textit{right Hilbert module}.
\end{defi}

\begin{rem}
Since the Hilbert space structure of a Hilbert module is understood with respect to the scalar part $\Sc\langle\cdot,\cdot\rangle$ of the inner product, also the \textit{induced norm} is understood as the one from the underlying real Hilbert space
\begin{equation*}
\Vert v\Vert^2:=\Sc\langle v,v\rangle,\qquad v\in V.
\end{equation*}
It is not difficult to show that this norm is compatible with the Clifford right multiplication as follows:
\begin{enumerate}
\item[$\circ$] $\Vert vs\Vert\leq 2^{\frac{n}{2}}|s|\Vert v\Vert$,\qquad $s\in\mathbb{R}_n$,
\item[$\circ$] $\Vert vs\Vert=|s|\Vert v\Vert$,\hspace{1.1cm} $s\in\mathbb{R}^{n+1}$.
\end{enumerate}
Note also, that a generalized version of Cauchy-Schwarz inequality holds, namely
\begin{equation*}
|\langle u,v\rangle|\leq 2^{\frac{n}{2}}\Vert u\Vert\Vert v\Vert,\qquad u,v\in V.
\end{equation*}
\end{rem}

In right Hilbert modules we can now define right linear operators.

\begin{defi}[Right linear operators]
Let $V$ be a right Hilbert module. A mapping $T:V\supseteq\dom T\rightarrow V$ is called a \textit{right linear operator}, if its domain $\dom T$ is a right linear subspace of $V$, and if
\begin{enumerate}
\item[$\circ$] $T(v+w)=Tv+Tw$,\qquad $v,w\in\dom T$,
\item[$\circ$] $T(vs)=(Tv)s$,\hspace{1.78cm} $v\in\dom T$, $s\in\mathbb{R}_n$.
\end{enumerate}
Moreover, we call $T$ \textit{closed} (resp. \textit{bounded}), if $T$ is closed (resp. bounded) considered as a real-linear operator in $V$. We will denote the set of bounded operators and the set of closed operators as follows:
\begin{align*}
\mathcal{B}(V):=&\big\{T:V\rightarrow V\;\big|\;T\text{ is right linear, everywhere defined, and bounded}\big\}, \\
\mathcal{K}(V):=&\big\{T:V\supseteq\dom T\rightarrow V\;\big|\;T\text{ is right linear, and closed}\big\}.
\end{align*}
\end{defi}

Different from complex Hilbert spaces, in Cliffordian Hilbert modules, the spectrum of an operator $T$ is connected to the bounded invertibility of the operator
\begin{equation*}
Q_s[T]:=T^2-2s_0T+|s|^2,\qquad\text{with }\dom Q_s[T]:=\dom T^2.
\end{equation*}
This suggests the following definition of $S$-spectrum.

\begin{defi}[$S$-Spectrum]
For every $T\in\mathcal{K}(V)$, its \textit{$S$-resolvent set} and \textit{$S$-spectrum} are defined as
\begin{equation*}
\rho_S(T):=\big\{s\in\mathbb{R}^{n+1}\;\big|\;Q_s[T]^{-1}\in\mathcal{B}(V)\big\}\qquad\text{and}\qquad\sigma_S(T):=\mathbb{R}^{n+1}\setminus\rho_S(T).
\end{equation*}
\end{defi}

It is proven in \cite[Theorem~2.4]{ADJOINT} that the $S$-resolvent set can equivalently be written as
\begin{equation*}
\rho_S(T)=\big\{s\in\mathbb{R}^{n+1}\;\big|\;\mathcal{I}^Rs-T\text{ is bijective}\big\},
\end{equation*}
using for $s\in\mathbb{R}^{n+1}$ the \textit{right multiplication operator} $\mathcal{I}^Rs:V\rightarrow V$, defined as
\begin{equation}\label{Eq_IR}
\mathcal{I}^Rsv:=vs,\qquad v\in V.
\end{equation}
In this article we will consider the important special classes of sectorial and bisectorial operators. In order to introduce them, we define for every $\omega\in(0,\pi)$ the \textit{sector}
\begin{equation}\label{Eq_Somega}
S_\omega:=\big\{s\in\mathbb{R}^{n+1}\setminus\{0\}\;\big|\;|\Arg(s)|<\omega\big\},
\end{equation}
and for every $\omega\in(0,\frac{\pi}{2})$ the \textit{double sector}
\begin{equation}\label{Eq_Domega}
D_\omega:=S_\omega\cup(-S_\omega).
\end{equation}

\begin{defi}[Sectorial and bisectorial operators]
An operator $T\in\mathcal{K}(V)$ is called \textit{sectorial} of angle $\omega\in(0,\pi)$ (resp. \textit{bisectorial} of angle $\omega\in(0,\frac{\pi}{2})$), if its $S$-spectrum is contained in
\begin{equation*}
\sigma_S(T)\subseteq\overline{S_\omega}\qquad\Big(\text{resp}.\,\sigma_S(T)\subseteq\overline{D_\omega}\Big),
\end{equation*}
and for every $\varphi\in(\omega,\pi)$ (resp. $\varphi\in(\omega,\frac{\pi}{2})$), there exists $C_\varphi\geq 0$, such that
\begin{equation}\label{Eq_Resolvent_estimate}
\Vert(\mathcal{I}^Rs-T)^{-1}\Vert\leq\frac{C_\varphi}{|s|},\qquad s\in\mathbb{R}^{n+1}\setminus(S_\varphi\cup\{0\})\;\Big(\text{resp}.\;s\in\mathbb{R}^{n+1}\setminus(D_\varphi\cup\{0\})\Big).
\end{equation}
\end{defi}

For these classes of operators, we will now introduce a holomorphic functional calculus. The notion of holomorphicity in the Clifford algebra $\mathbb{R}_n$ will be the one of so called \textit{intrinsic functions}. While a precise definition is given in \cite[Definition~2.1]{MS24}, intuitively, a function $f:U\rightarrow\mathbb{R}^{n+1}$ is intrinsic, if it is defined on an axially symmetric set $U$, it is complex holomorphic on every hyperplane $\mathbb{C}_J$, and it satisfies $\overline{f(s)}=f(\overline{s})$. \medskip

Furthermore, for the functional calculus to be well defined, we need to restrict intrinsic functions to those with a certain decay/growth at 0 and at infinity. I.e., we define the sets of \textit{decaying}, of \textit{bounded} and of \textit{polynomially growing} functions
\begin{subequations}
\begin{align}
\mathcal{N}^0(U):=&\Big\{f:U\rightarrow\mathbb{R}^{n+1}\text{ intrinsic}\;\Big|\;\exists\alpha>0,\,C_\alpha\geq 0: |f(s)|\leq\frac{C_\alpha|s|^\alpha}{1+|s|^{2\alpha}}\Big\}, \label{Eq_N0} \\
\mathcal{N}^\infty(U):=&\big\{f:U\rightarrow\mathbb{R}^{n+1}\text{ intrinsic}\;\big|\;\exists C\geq 0: |f(s)|\leq C\big\}, \label{Eq_Ninfty} \\
\mathcal{N}^\poly(U):=&\Big\{f:U\rightarrow\mathbb{R}^{n+1}\text{ intrinsic}\;\Big|\;\exists k\geq 0,\,C_k\geq 0: |f(s)|\leq C_k\Big(|s|^k+\frac{1}{|s|^k}\Big)\Big\}. \label{Eq_Npoly}
\end{align}
\end{subequations}
For these classes $\mathcal{N}^0$ and $\mathcal{N}^\poly$ we will now define two functional calculi. For functions in $\mathcal{N}^0$, the calculus will lead to bounded operators, and for $\mathcal{N}^\infty$, it will return an unbounded operator in general. Note that the class $\mathcal{N}^\infty$ is in some sense border line, because on the one hand the calculus of $\mathcal{N}^0$ does no longer apply, but (under the conditions of Section~\ref{sec_Accretive_operators}) the functional calculus of $\mathcal{N}^\infty$ still gives a bounded operator. We will use this space only later in Definition~\ref{defi_Bounded_FC}.

\begin{defi}[$\omega$- and $H^\infty$-functional calculus]\label{defi_omega_Hinfty}
Let $T\in\mathcal{K}(V)$ be injective. Depending on whether the operator $T$ is sectorial or bisectorial, we define the \textit{$\omega$-functional calculus} and the \textit{$H^\infty$-functional calculus} as follows: \medskip

\makebox[0.49\textwidth][l]{
\begin{minipage}[t]{0.47\textwidth}
Let $T$ be sectorial of angle $\omega\in(0,\pi)$. \medskip

i)\;For $f\in\mathcal{N}^0(S_\theta)$, $\theta\in(\omega,\pi)$, we define
\begin{equation*}
f(T)v:=\frac{1}{2\pi}\hspace{-0.2cm}\int\limits_{\partial S_\varphi\cap\mathbb{C}_J}(\mathcal{I}^Rs-T)^{-1}vf(s)ds_J,
\end{equation*}
where $\varphi\in(\omega,\theta)$ and $J\in\mathbb{S}$ are arbitrary.

\begin{center}
\begin{tikzpicture}[scale=0.8]
\fill[black!15] (0,0)--(-1.56,-1.56) arc (-135:135:2.2);
\fill[black!30] (0,0)--(-0.57,-2.125) arc (-105:105:2.2);
\draw[thick] (-1.1,1.905)--(0,0)--(-1.1,-1.905);
\draw[thick,->] (-1.1,1.905)--(-0.7,1.212);
\draw[thick,->] (0,0)--(-0.7,-1.212);
\draw (-1.56,-1.56)--(0,0)--(-1.56,1.56);
\draw (-0.57,-2.125)--(0,0)--(-0.57,2.125);
\draw (0.9,0) arc (0:105:0.9) (0.3,0.35) node[anchor=south] {\tiny{$\omega$}};
\draw (1.3,0) arc (0:120:1.3) (0.5,0.7) node[anchor=south] {\tiny{$\varphi$}};
\draw (1.7,0) arc (0:135:1.7) (0.7,1.05) node[anchor=south] {\tiny{$\theta$}};
\draw[->] (-2.4,0)--(2.6,0);
\draw[->] (0,-2.3)--(0,2.4);
\draw (0,2) node[anchor=west] {\large{$\mathbb{C}_J$}};
\end{tikzpicture}
\end{center}
\medskip ii)\;For $f\in\mathcal{N}^\poly(S_\theta)$, $\theta\in(\omega,\pi)$, we define
\begin{equation*}
f(T):=e(T)^{-1}(ef)(T),
\end{equation*}
where $e\in\mathcal{N}^0(S_\theta)$ is such that $ef\in\mathcal{N}^0(S_\theta)$ and $e(T)$ is injective. One possible choice is $e(s)=\frac{s^{k+1}}{(1+s)^{2k+2}}$, with $k\in\mathbb{N}_0$ from \eqref{Eq_Npoly}.
\end{minipage}}
\makebox[0.49\textwidth][r]{
\begin{minipage}[t]{0.47\textwidth}
Let $T$ be bisectorial of angle $\omega\in(0,\frac{\pi}{2})$. \medskip

i')\;For $f\in\mathcal{N}^0(D_\theta)$, $\theta\in(\omega,\frac{\pi}{2})$, we define
\begin{equation*}
f(T)v:=\frac{1}{2\pi}\hspace{-0.2cm}\int\limits_{\partial D_\varphi\cap\mathbb{C}_J}(\mathcal{I}^Rs-T)^{-1}vf(s)ds_J,
\end{equation*}
where $\varphi\in(\omega,\theta)$ and $J\in\mathbb{S}$ are arbitrary.
\begin{center}
\begin{tikzpicture}[scale=0.8]
\draw[white,->] (0,-2.3)--(0,2.4);
\fill[black!15] (0,0)--(1.56,1.56) arc (45:-45:2.2)--(0,0)--(-1.56,-1.56) arc (225:135:2.2);
\fill[black!30] (0,0)--(2,0.93) arc (25:-25:2.2)--(0,0)--(-2,-0.93) arc (205:155:2.2);
\draw (2,0.93)--(-2,-0.93);
\draw (2,-0.93)--(-2,0.93);
\draw (1.56,1.56)--(-1.56,-1.56);
\draw (1.56,-1.56)--(-1.56,1.56);
\draw (0.9,0) arc (0:25:0.9) (0.67,-0.1) node[anchor=south] {\tiny{$\omega$}};
\draw (1.3,0) arc (0:35:1.3) (1.05,-0.05) node[anchor=south] {\tiny{$\varphi$}};
\draw (1.7,0) arc (0:45:1.7) (1.45,0.05) node[anchor=south] {\tiny{$\theta$}};
\draw[thick] (1.8,1.26)--(-1.8,-1.26);
\draw[thick] (1.8,-1.26)--(-1.8,1.26);
\draw[thick,->] (1.8,1.26)--(1.47,1.03);
\draw[thick,->] (0,0)--(1.47,-1.03);
\draw[thick,->] (-1.8,-1.26)--(-1.47,-1.03);
\draw[thick,->] (0,0)--(-1.47,1.03);
\draw[->] (-2.4,0)--(2.6,0);
\draw[->] (0,-1.5)--(0,1.5) node[anchor=north west] {\large{$\mathbb{C}_J$}};
\end{tikzpicture}
\end{center}

\medskip ii')\;For $f\in\mathcal{N}^\poly(D_\theta)$, $\theta\in(\omega,\frac{\pi}{2})$, we define
\begin{equation*}
f(T):=e(T)^{-1}(ef)(T),
\end{equation*}
where $e\in\mathcal{N}^0(D_\theta)$ is such that $ef\in\mathcal{N}^0(D_\theta)$ and $e(T)$ is injective. One possible choice is $e(s)=\frac{s^{k+1}}{(1+s^2)^{k+1}}$, with $k\in\mathbb{N}_0$ from \eqref{Eq_Npoly}.
\end{minipage}}
\end{defi}

Although it is not obvious from the definition itself, it turns out the operators $f(T)$ from Definition~\ref{defi_omega_Hinfty} are right-linear with respect to arbitrary Clifford numbers, see Remark~\ref{rem_Gantner_equivalence}. \medskip

To appreciate the differences between the intrinsic $S$-functional calculus and the classic $S$-functional calculus of left slice-hyperholomorphic functions, we provide the following remark, explaining the significance of their distinction.

\begin{rem}\label{rem_Gantner_equivalence}
We point out that the operator $(\mathcal{I}^Rs-T)^{-1}$ is not right linear (only $\mathbb{R}$-linear), and also that the map $s\mapsto(\mathcal{I}^Rs-T)^{-1}$ is not slice hyperholomorphic on $\mathbb{R}^{n+1}$. Still, the definition of the $\omega$-functional calculus makes sense and leads to right linear operators, because it only relies on complex holomorphicity on each complex plane $\mathbb{C}_J$. However, this complex holomorphicity hardly depends on the function $f$ being intrinsic. If one wants to extend the $\omega$-functional calculus also to left slice hyperholomorphic functions, one basically has to replace the resolvent $(\mathcal{I}^Rs-T)^{-1}$ by the left $S$-resolvent operator
\begin{equation}\label{Eq_SL}
S_L^{-1}(s,T)v=Q_s[T]^{-1}\overline{s}v-TQ_s[T]^{-1}v,\qquad v\in V.
\end{equation}
See for example the books \cite[Definition~7.1.5]{FJBOOK} and \cite[Definition~6.2.6]{CGK}, as well as the article \cite[Definition~3.5]{MS24}. Note that the definition \eqref{Eq_SL} itself makes no sense in right Hilbert modules, because the left multiplication $\overline{s}v$ is not defined there. However, it is shown in \cite[Theorem~2.4]{ADJOINT} that the only difference between $(\mathcal{I}^Rs-T)^{-1}$ and $S_L^{-1}(s,T)$ lies precisely in this problematic left product. I.e., if we invert this product, we end up with
\begin{equation*}
(\mathcal{I}^Rs-T)^{-1}=Q_s[T]^{-1}v\overline{s}-TQ_s[T]^{-1}v,\qquad v\in V.
\end{equation*}
\end{rem}

While the $H^\infty$-functional calculus from Definition~\ref{defi_omega_Hinfty}~ii) \& ii') in general defines unbounded operators, it is a central question whether the operator $f(T)$ is bounded for bounded functions $f$. As examples show \cite[Theorem~3]{MY90}, this is not the case in general. However, since this will be a crucial property in our considerations of projectors in this article, we define the following class of operators.

\begin{defi}[Operators with bounded $H^\infty$-functional calclulus]\label{defi_Bounded_FC}
Let $T\in\mathcal{K}(V)$ be injective and sectorial of angle $\omega\in(0,\pi)$ (resp. bisectorial of angle $\omega\in(0,\frac{\pi}{2})$). Then $T$ is said to have a \textit{bounded $H^\infty$-functional calculus}, if for every angle $\theta\in(\omega,\pi)$ (resp. $\theta\in(\omega,\frac{\pi}{2})$), there exists a constant $C_\theta\geq 0$, such that $f(T)\in\mathcal{B}(V)$, and
\begin{equation*}
\Vert f(T)\Vert\leq C_\theta\Vert f\Vert_\infty,\qquad f\in\mathcal{N}^\infty(S_\theta)\;\big(\text{resp. }f\in\mathcal{N}^\infty(D_\theta)\big).
\end{equation*}
\end{defi}

The notion of bounded $H^\infty$-functional calculus is closely related to so called \textit{quadratic estimate}. See \cite[Theorem~4.7]{CMS25}, but also \cite[Section~8]{M86} and \cite[Theorem~10.4.21]{HYTONBOOK2}.

\begin{thm}[Quadratic estimates]\label{thm_Quadratic_estimates}
Let $T\in\mathcal{K}(V)$ be injective and sectorial of angle $\omega\in(0,\pi)$ (resp. bisectorial of angle $\omega\in(0,\frac{\pi}{2})$). Then $T$ has a bounded $H^\infty$-functional calculus if and only if there exists a function $g\in\mathcal{N}^0(S_\theta)$, for some $\theta\in(\omega,\pi)$ (resp. $\theta\in(\omega,\frac{\pi}{2})$), and constants $0<c\leq d$, such that for every $v\in V$ there holds
\begin{equation*}
c\Vert v\Vert^2\leq\int_0^\infty\Vert g(tT)v\Vert^2\frac{dt}{t}\leq d\Vert v\Vert^2\qquad\bigg(\text{resp. }c\Vert v\Vert^2\leq\int_{-\infty}^\infty\Vert g(tT)v\Vert^2\frac{dt}{|t|}\leq d\Vert v\Vert^2\bigg).
\end{equation*}
\end{thm}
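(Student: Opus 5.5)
The plan is to follow McIntosh's classical argument for Hilbert spaces and transfer it to the Clifford module through the intrinsic calculus, doing the sectorial case in full; the bisectorial case is identical after replacing $(0,\infty)$ by $\mathbb{R}\setminus\{0\}$ and $dt/t$ by $dt/|t|$. Throughout, the basic tool is the product rule $(fg)(T)=f(T)g(T)$ for $f,g\in\mathcal{N}^0$, together with its extension to $\mathcal{N}^\poly$.

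\emph{Necessity.} Assume that $T$ has a bounded $H^\infty$-functional calculus. I will take $g(s)=\frac{s^{1/2}}{1+s}$, or $g(s)=\frac{s}{1+s^2}$ in the bisectorial case. By rotating the $\mathbb{C}_J$-integral, the function $G(s):=\int_0^\infty g(ts)^2\frac{dt}{t}$ is a positive constant $c_g$ on $S_\theta$. For the upper bound I use a Rademacher/averaging trick. For $\varepsilon_k\in\{\pm1\}$ and dyadic $t_k=2^k\tau$, the function $f_\varepsilon(s)=\sum_k\varepsilon_k g(t_ks)$ lies in $\mathcal{N}^\infty$ with uniformly bounded sup-norm, so $\|f_\varepsilon(T)v\|\le C\|v\|$. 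Averaging $\|f_\varepsilon(T)v\|^2$ over the signs kills the cross terms, because $\Sc\langle\cdot,\cdot\rangle$ is a real inner product. Integrating over $\tau\in[1,2]$ then yields $\int_0^\infty\|g(tT)v\|^2\frac{dt}{t}\le d\|v\|^2$.

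The lower bound follows by duality. I will show that $\int_0^\infty g(tT)^2v\frac{dt}{t}=c_gv$, first on $\ran(e(T))$ via Fubini in the Cauchy integrals and then by density, where density holds because $T$ is injective on a Hilbert module. Then I write $c_g\|v\|^2=\int\Sc\langle g(tT)^*v,g(tT)v\rangle\frac{dt}{t}$ and apply Cauchy--Schwarz. This requires the upper estimate for the adjoint $T^*$. Here I use that $T^*$ is again (bi)sectorial with $g(tT)^*=g(tT^*)$ by intrinsicness, and that $T^*$ inherits the bounded calculus because $f(T)^*=\overline{f}(T^*)$ \cite{ADJOINT}.

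\emph{Sufficiency.} Assume the two-sided estimate for some $g$. For $f\in\mathcal{N}^\infty\cap\mathcal{N}^0$ I first compute
\[
\int_0^\infty\|g(tT)f(T)v\|^2\tfrac{dt}{t}.
\]
I choose $\psi\in\mathcal{N}^0$ with $\int_0^\infty\psi(tT)g(tT)\frac{dt}{t}=\mathrm{Id}$, a Calderón reproducing formula obtained as in the necessity part. Writing $g(tT)f(T)=\int_0^\infty g(tT)f(T)\psi(rT)\,\cdot\,\frac{dr}{r}$ and applying Schur's test, I need the off-diagonal bound $\|(g_tf\psi_r)(T)\|\le C\|f\|_\infty\min(t/r,r/t)^\delta$. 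This bound comes directly from the contour integral together with the resolvent estimate \eqref{Eq_Resolvent_estimate}. Combining the upper quadratic estimate for $g$, or for $\psi$ obtained the same way, with the lower one gives $\|f(T)v\|\le C\|f\|_\infty\|v\|$. Finally, for general $f\in\mathcal{N}^\infty$ I approximate by $f_k=f\,e_k$ with $e_k(s)=\big(\frac{k^2s}{(1+ks)(k+s)}\big)$-type regularizers converging to $1$, and use closedness of $f(T)=e(T)^{-1}(ef)(T)$ together with a convergence lemma to pass to the limit.

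\emph{Main obstacle.} The hardest step is the duality argument: identifying $g(tT)^*$ with $g(tT^*)$ and proving the reproducing formula in the non-commutative setting, where $(\mathcal{I}^Rs-T)^{-1}$ is only $\mathbb{R}$-linear. I would try to handle it by working slice-wise in a fixed $\mathbb{C}_J$ and using the adjoint $S$-calculus of \cite{ADJOINT}.
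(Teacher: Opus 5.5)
The paper gives no proof of this statement. It is quoted from \cite[Theorem~4.7]{CMS25}, with \cite[Section~8]{M86} and \cite[Theorem~10.4.21]{HYTONBOOK2} cited as the complex Hilbert-space analogues. Your outline is the classical McIntosh argument moved into the intrinsic calculus, and it is sound.

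Your sufficiency argument needs only the two-sided estimate for $T$ itself, which is exactly what the statement provides. Take $\psi=g/c_g$ with $c_g=\int_0^\infty g(t)^2\frac{dt}{t}$. This constant is positive because an intrinsic $g\not\equiv 0$ is real on $(0,\infty)$. Moreover, $\int_0^\infty g(ts)^2\frac{dt}{t}$ is constant on the sector by dilation invariance and holomorphy on each slice. Schur's test, with the off-diagonal bound coming from $\Vert h(T)\Vert\le C\int_{\partial S_\varphi\cap\mathbb{C}_J}|h(s)|\frac{|ds|}{|s|}$, then controls $\int_0^\infty\Vert g(tT)f(T)v\Vert^2\frac{dt}{t}$ by $\Vert f\Vert_\infty^2\int_0^\infty\Vert g(rT)v\Vert^2\frac{dr}{r}$. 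Applying the lower estimate to $f(T)v$ closes the argument. It works for every target angle $\theta'$, since $g_tf\psi_r\in\mathcal{N}^0(S_{\min\{\theta,\theta'\}})$.

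The step you flag as the main obstacle can be avoided entirely. The lower estimate in the necessity part follows from the same random-sign device, with no adjoint:
\begin{itemize}
\item On the dense set $\ran e(T)$ one has $v=\int_1^2\sum_k\psi(2^k\tau T)g(2^k\tau T)v\,\frac{d\tau}{\tau}$, after truncating to $|k|\le N$ and letting $N\to\infty$.
\item Since $\mathbb{E}[\varepsilon_j\varepsilon_k]=\delta_{jk}$, the inner sum equals $\mathbb{E}[\Psi_\varepsilon(T)G_\varepsilon(T)v]$, where $\Psi_\varepsilon=\sum_j\varepsilon_j\psi(2^j\tau\,\cdot)$ and $G_\varepsilon=\sum_k\varepsilon_kg(2^k\tau\,\cdot)$.
\item The bounded calculus gives $\Vert\Psi_\varepsilon(T)\Vert\le C$ uniformly. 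Hence $\Vert v\Vert\le C\int_1^2(\mathbb{E}\Vert G_\varepsilon(T)v\Vert^2)^{1/2}\frac{d\tau}{\tau}\le C'(\int_0^\infty\Vert g(tT)v\Vert^2\frac{dt}{t})^{1/2}$.
\item By density and the already-proved upper estimate, this extends to all $v\in V$.
\end{itemize}

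Your duality route also works. The identities $((\mathcal{I}^Rs-T)^{-1})^*=(\mathcal{I}^R\overline{s}-T^*)^{-1}$ and $g(\overline{s})=\overline{g(s)}\in\mathbb{C}_J$ give $h(T)^*=h(T^*)$ for $h\in\mathcal{N}^0$. That suffices for the upper estimate of $T^*$, because only finite sums of such $h$ enter. You do not need the adjoint identity on all of $\mathcal{N}^\infty$, but you do need that $T^*$ is injective and (bi)sectorial, which is extra work.

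Two minor points. First, carry out the sign averaging with finite sums, so you never need convergence of $\sum_k\varepsilon_kg(t_kT)v$. Second, in the bisectorial case the function $g$ in the statement should lie in $\mathcal{N}^0(D_\theta)$, not $\mathcal{N}^0(S_\theta)$.
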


\section{Bounded $H^\infty$-functional calculus of $m$-accretive operators}\label{sec_Accretive_operators}

In this section we will specify a class of operators which admit a bounded $H^\infty$-functional calculus, the so called maximal accretive operators from Definition~\ref{defi_Accretive}, also called $m$-accretive in short. We will also study the connection between bisectorial operators $T$ and their sectorial square $T^2$. The main result in this context is Corollary~\ref{cor_T2_accretive}, which is about the bounded $H^\infty$-functional calculus of bisectorial operators with maximal accretive square. This result is then the basis for Theorem~\ref{thm_Bounded_FC_gradient}, which proves that the gradient operator has a bounded $H^\infty$-functional calculus. \medskip

It is already shown in \cite[Lemma~3.25]{CMS26}, that the $S$-spectrum of the squared operator is
\begin{equation}\label{Eq_Spectrum_T2}
\sigma_S(T^2)=\sigma_S(T)^2,
\end{equation}
and in \cite[Proposition~3.26]{CMS26} that the square of every bisectorial operator of angle $\omega\in(0,\frac{\pi}{2})$ is sectorial of angle $2\omega\in(0,\pi)$. In Lemma~\ref{lem_FC_T2}, we follow up with this comparison and also show that the functional calculi of $T$ and $T^2$ are for every function $f:S_{2\theta}\rightarrow\mathbb{R}_n$ connected via the function
\begin{equation*}
\widehat{f}(s):=f(s^2),\qquad s\in D_\theta.
\end{equation*}
In Theorem~\ref{thm_Bounded_FC_T2}, we furthermore show that $T$ has a bounded $H^\infty$-functional calculus, if and only if its square $T^2$ has a bounded $H^\infty$-functional calculus.

\begin{lem}\label{lem_FC_T2}
Let $T\in\mathcal{K}(V)$ be injective and bisectorial of angle $\omega\in(0,\frac{\pi}{2})$. Then for every $f\in\mathcal{N}^\poly(S_{2\theta})$, $\theta\in(\omega,\frac{\pi}{2})$, there is $\widehat{f}\in\mathcal{N}^\poly(D_\theta)$, and there holds
\begin{equation}\label{Eq_FC_T2}
f(T^2)=\widehat{f}(T).
\end{equation}
Here, the left hand side is understood as the sectorial $H^\infty$-functional calculus of the operator $T^2$, and the right hand side as the bisectorial $H^\infty$-functional calculus of the operator $T$.
\end{lem}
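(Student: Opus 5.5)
We first check that $\widehat f(s):=f(s^2)$ lies in $\mathcal{N}^\poly(D_\theta)$. The map $s\mapsto s^2$ sends $D_\theta$ into $S_{2\theta}$: on each $\mathbb{C}_J$ it doubles the argument and identifies $\pm s$. It also preserves axial symmetry and the intrinsic property, because $\overline{s^2}=\overline{s}^2$ and $s^2\in\mathbb{C}_J$ whenever $s\in\mathbb{C}_J$. Holomorphy on every slice is clear. The growth bound $|f(s^2)|\leq C_k(|s|^{2k}+|s|^{-2k})$ shows $\widehat f\in\mathcal{N}^\poly(D_\theta)$. The same computation shows that $f\in\mathcal{N}^0(S_{2\theta})$ implies $\widehat f\in\mathcal{N}^0(D_\theta)$, with exponent $2\alpha$. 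Recall also that $T^2$ is injective and sectorial of angle $2\omega$ by \cite[Proposition~3.26]{CMS26}.

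\textbf{Step 1 (the $\mathcal{N}^0$ case).} We first prove \eqref{Eq_FC_T2} for $f\in\mathcal{N}^0(S_{2\theta})$ by comparing Cauchy integrals, and this is the main obstacle. We will use a resolvent identity that links the two resolvents. For $s\in\mathbb{C}_J\setminus\overline{D_\varphi}$ and $p=s^2$, factor
\begin{equation*}
\mathcal{I}^Rp-T^2=(\mathcal{I}^Rs-T)(\mathcal{I}^Rs+T).
\end{equation*}
This holds because $\mathcal{I}^Rs$ commutes with the right-linear $T$ and $\mathcal{I}^Rs\,\mathcal{I}^Rs=\mathcal{I}^R(s^2)$. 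Inverting, and using partial fractions (valid since everything commutes), gives
\begin{equation*}
(\mathcal{I}^R(s^2)-T^2)^{-1}\,\mathcal{I}^R(2s)=(\mathcal{I}^Rs-T)^{-1}+(\mathcal{I}^Rs+T)^{-1}=(\mathcal{I}^Rs-T)^{-1}-(\mathcal{I}^R(-s)-T)^{-1}.
\end{equation*}
Since $\mathcal{I}^R$ reverses the order of products, we must keep track of where $s$ sits. Because $s$, $f(s^2)$ and $ds_J$ all lie in $\mathbb{C}_J$, they commute, so this bookkeeping is harmless.

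Next, substitute $p=s^2$ in the sectorial integral for $f(T^2)$ over $\partial S_{2\varphi}\cap\mathbb{C}_J$. The contour $\partial S_{2\varphi}$ is the image under squaring of the half of $\partial D_\varphi$ lying in the right half-plane, and $dp_J=2s\,ds_J$. This yields
\begin{equation*}
f(T^2)v=\frac{1}{2\pi}\int_{\partial S_\varphi\cap\mathbb{C}_J}(\mathcal{I}^R(s^2)-T^2)^{-1}v\,2s\,f(s^2)\,ds_J.
\end{equation*}
Inserting the partial-fraction identity splits this integral into the contribution of $(\mathcal{I}^Rs-T)^{-1}$ over $\partial S_\varphi$ and the contribution of $(\mathcal{I}^R(-s)-T)^{-1}$. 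In the second term we substitute $s\mapsto -s$, which maps $\partial S_\varphi$ onto $\partial(-S_\varphi)$. Orientations must be checked carefully against the picture in Definition~\ref{defi_omega_Hinfty}, and with that check the second term becomes the $-S_\varphi$ part of $\partial D_\varphi$. Together the two terms give exactly $\widehat f(T)v$. The delicate points are matching orientations and confirming that the factor $2s$ combines correctly with $f(s^2)$. Absolute convergence follows from the decay of $f$.

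\textbf{Step 2 (the $\mathcal{N}^\poly$ case).} For general $f$, choose a regulariser $e\in\mathcal{N}^0(S_{2\theta})$ with $ef\in\mathcal{N}^0$ and $e(T^2)$ injective, for example $e(p)=\frac{p^{k+1}}{(1+p)^{2k+2}}$. Then $\widehat e(s)=\frac{s^{2k+2}}{(1+s^2)^{2k+2}}\in\mathcal{N}^0(D_\theta)$ and $\widehat{ef}=\widehat e\,\widehat f$. By Step 1, $\widehat e(T)=e(T^2)$, which is injective, and $(\widehat e\widehat f)(T)=(ef)(T^2)$. Since the definition in ii') is independent of the chosen regulariser (standard), we may use $\widehat e$ there, and obtain
\begin{equation*}
\widehat f(T)=\widehat e(T)^{-1}(\widehat e\widehat f)(T)=e(T^2)^{-1}(ef)(T^2)=f(T^2).
\end{equation*}
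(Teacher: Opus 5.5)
Your proof is correct and follows the paper's argument. In the $\mathcal{N}^0$ case you substitute $q=s^2$ in the sectorial Cauchy integral, apply the resolvent identity $2(\mathcal{I}^Rs^2-T^2)^{-1}\mathcal{I}^Rs=(\mathcal{I}^Rs-T)^{-1}+(\mathcal{I}^Rs+T)^{-1}$, and reflect the second term onto $\partial(-S_\varphi)$; you then reach $\mathcal{N}^\poly$ through the regulariser $\widehat e$. Your extra checks (commutation of $\mathcal{I}^Rs$ with $T$, orientation under $s\mapsto -s$, injectivity of $\widehat e(T)=e(T^2)$) simply make explicit details the paper leaves implicit.
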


\begin{proof}
In the \textit{first step}, let us consider $f\in\mathcal{N}^0(S_{2\theta})$, for some $\theta\in(\omega,\frac{\pi}{2})$. Then it is clear that $\widehat{f}\in\mathcal{N}^0(D_\theta)$, and the sectorial $\omega$-functional calculus of $T^2$ can be written as
\begin{align}
f(T^2)v&=\frac{1}{2\pi}\int_{\partial S_{2\varphi}\cap\mathbb{C}_J}(\mathcal{I}^Rq-T^2)^{-1}vf(q)dq_J \notag \\
&=\frac{1}{2\pi}\int_{\partial S_\varphi\cap\mathbb{C}_J}(\mathcal{I}^Rs^2-T^2)^{-1}v\widehat{f}(s)2sds_J, \label{Eq_FC_T2_1}
\end{align}
where in the second equation we substituted $q=s^2$. By the equivalence of spectra \eqref{Eq_Spectrum_T2}, there is $s\in\rho_S(T)$ if and only if $s^2\in\rho_S(T^2)$, and one can easily verify the resolvent identity
\begin{equation*}
2(\mathcal{I}^Rs^2-T^2)^{-1}\mathcal{I}^Rs=(\mathcal{I}^Rs-T)^{-1}+(\mathcal{I}^Rs+T)^{-1}.
\end{equation*}
With this, we can write \eqref{Eq_FC_T2_1} as
\begin{align*}
f(T^2)v&=\frac{1}{2\pi}\int_{\partial S_\varphi\cap\mathbb{C}_J}\big((\mathcal{I}^Rs-T)^{-1}+(\mathcal{I}^Rs+T)^{-1}\big)v\widehat{f}(s)ds_J \\
&=\frac{1}{2\pi}\int_{\partial S_\varphi\cap\mathbb{C}_J}(\mathcal{I}^Rs-T)^{-1}v\widehat{f}(s)ds_J+\frac{1}{2\pi}\int_{\partial(-S_\varphi)\cap\mathbb{C}_J}(\mathcal{I}^Rs-T)^{-1}v\widehat{f}(-s)ds_J \\
&=\frac{1}{2\pi}\int_{\partial D_\varphi\cap\mathbb{C}_J}(\mathcal{I}^Rs-T)^{-1}v\widehat{f}(s)ds_J=\widehat{f}(T)v,
\end{align*}
where in the last line we used $\widehat{f}(-s)=\widehat{f}(s)$, and that the boundaries of the two sectors combine to the boundary of the double sector $\partial S_\varphi\cup\partial(-S_\varphi)=\partial D_\varphi$. \medskip

In the \textit{second step} we consider a polynomially growing function $f\in\mathcal{N}^\poly(S_{2\theta})$, $\theta\in(\omega,\frac{\pi}{2})$. Then it is clear that $\widehat{f}\in\mathcal{N}^\poly(D_\theta)$ and if $e$ is a regularizer of $f$, then $\widehat{e}$ is a regularizer of $\widehat{f}$. So, the $H^\infty$-functional calculi $\widehat{f}(T)$ and $f(T^2)$ coincide, because
\begin{equation*}
f(T^2)=e(T^2)^{-1}(ef)(T^2)=\widehat{e}(T)^{-1}(\widehat{ef})(T)=\widehat{f}(T). \qedhere
\end{equation*}
\end{proof}

Now that we know that the functional calculi of $T$ and $T^2$ are connected, it is straight forward to conclude that $T$ has a bounded $H^\infty$-functional calculus if and only if $T^2$ has a bounded $H^\infty$-functional calculus, according to Definition~\ref{defi_Bounded_FC}.

\begin{thm}\label{thm_Bounded_FC_T2}
Let $T\in\mathcal{K}(V)$ be injective and bisectorial of angle $\omega\in(0,\frac{\pi}{2})$. Then $T$ has a bounded $H^\infty$-functional calculus if and only if $T^2$ has a bounded $H^\infty$-functional calculus.
\end{thm}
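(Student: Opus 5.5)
The plan is to prove the two implications separately. The direction from $T$ to $T^2$ follows directly from Lemma~\ref{lem_FC_T2}. For the converse I would go through the quadratic estimates of Theorem~\ref{thm_Quadratic_estimates}, because a general bounded function on $D_\theta$ need not be even, and so is not of the form $\widehat{f}$.

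\emph{Preliminaries.} First, $T^2$ is injective, since $T$ is. By \cite[Proposition~3.26]{CMS26}, $T^2$ is sectorial of angle $2\omega$. An angle $\theta'\in(2\omega,\pi)$ corresponds exactly to $\theta=\theta'/2\in(\omega,\frac{\pi}{2})$. Hence both notions of bounded $H^\infty$-functional calculus in Definition~\ref{defi_Bounded_FC} are meaningful, and the admissible angles match.

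\emph{($T$ bounded $\Rightarrow$ $T^2$ bounded).} Let $\theta'\in(2\omega,\pi)$, $\theta=\theta'/2$, and $f\in\mathcal{N}^\infty(S_{\theta'})$. Then $\widehat{f}(s)=f(s^2)$ is intrinsic and bounded on $D_\theta$, with $\Vert\widehat{f}\Vert_{\infty,D_\theta}=\Vert f\Vert_{\infty,S_{\theta'}}$, because $s\mapsto s^2$ maps $D_\theta$ onto $S_{\theta'}$. Lemma~\ref{lem_FC_T2} then gives
\begin{equation*}
\Vert f(T^2)\Vert=\Vert\widehat{f}(T)\Vert\leq C_\theta\Vert\widehat{f}\Vert_\infty=C_\theta\Vert f\Vert_\infty .
\end{equation*}

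\emph{($T^2$ bounded $\Rightarrow$ $T$ bounded).} By Theorem~\ref{thm_Quadratic_estimates} applied to the sectorial operator $T^2$, there exist $g\in\mathcal{N}^0(S_{\theta'})$ and constants $0<c\leq d$ with
\begin{equation*}
c\Vert v\Vert^2\leq\int_0^\infty\Vert g(uT^2)v\Vert^2\frac{du}{u}\leq d\Vert v\Vert^2,\qquad v\in V.
\end{equation*}
Set $h:=\widehat{g}\in\mathcal{N}^0(D_\theta)$. For $t\neq0$, the operator $tT$ is injective and bisectorial of the same angle, and $(tT)^2=t^2T^2$. Lemma~\ref{lem_FC_T2} applied to $tT$ therefore gives $h(tT)=g(t^2T^2)$. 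Since $h$ is even, the integral over $(-\infty,0)$ equals the one over $(0,\infty)$. Substituting $u=t^2$, so that $\frac{du}{u}=2\frac{dt}{t}$, yields
\begin{equation*}
\int_{-\infty}^\infty\Vert h(tT)v\Vert^2\frac{dt}{|t|}=2\int_0^\infty\Vert g(t^2T^2)v\Vert^2\frac{dt}{t}=\int_0^\infty\Vert g(uT^2)v\Vert^2\frac{du}{u}.
\end{equation*}
Thus the bisectorial quadratic estimate holds for $h$ with the same constants $c,d$. The reverse direction of Theorem~\ref{thm_Quadratic_estimates} then shows that $T$ has a bounded $H^\infty$-functional calculus.

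\emph{Expected obstacle.} The main point needing care is the identity $h(tT)=g(t^2T^2)$. This means checking that scaling by a real $t$, including negative $t$, preserves bisectoriality and injectivity, so that Lemma~\ref{lem_FC_T2} applies to $tT$. It also means checking that $g(uT^2)$, defined as the calculus of the scaled operator $uT^2$, is the object that appears in the quadratic estimate. Both should follow from the resolvent identity $(\mathcal{I}^Rs-tT)^{-1}=t^{-1}(\mathcal{I}^R(s/t)-T)^{-1}$ together with a change of variables in the Cauchy integral.
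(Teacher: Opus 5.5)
Your proof is correct. For the implication from $T^2$ to $T$ you follow the paper's route: take the function $g$ from the sectorial quadratic estimate of $T^2$, pass to the even function $\widehat{g}$, use $\widehat{g}(tT)=g(t^2T^2)$ from Lemma~\ref{lem_FC_T2}, and substitute $u=t^2$.

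The difference lies in the other implication. The paper proves the single square-function identity \eqref{Eq_Bounded_FC_1} and invokes Theorem~\ref{thm_Quadratic_estimates} for both directions at once. You instead obtain $T\Rightarrow T^2$ directly from Lemma~\ref{lem_FC_T2} together with $\Vert\widehat{f}\Vert_{\infty,D_\theta}=\Vert f\Vert_{\infty,S_{2\theta}}$.

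Your route has three advantages:
\begin{itemize}
\item it is more elementary;
\item it gives an explicit constant: the bound for $T^2$ on $S_{2\theta}$ is the bound for $T$ on $D_\theta$;
\item it avoids a point the paper leaves implicit. The identity \eqref{Eq_Bounded_FC_1} only transfers a quadratic estimate from $T$ to $T^2$ if the test function for $T$ is even, i.e.\ of the form $\widehat{f}$. Theorem~\ref{thm_Quadratic_estimates} as stated only provides \emph{some} $g\in\mathcal{N}^0(D_\theta)$, so the paper's symmetric argument tacitly needs the stronger ``for every nondegenerate $g$'' version of the quadratic-estimate characterization.
\end{itemize}
In return, the paper's approach is more compact and treats both directions uniformly.

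Your diagnosis is also right: a general $f$ on $D_\theta$ is not even, so the converse cannot be obtained from Lemma~\ref{lem_FC_T2} alone and genuinely requires quadratic estimates.
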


\begin{proof}
Due to the equivalence of bounded $H^\infty$-functional calculus and quadratic estimates in Theorem~\ref{thm_Quadratic_estimates}, it is sufficient to prove for every $f\in\mathcal{N}^0(D_\theta)$, $\theta\in(0,\frac{\pi}{2})$, the integral equality
\begin{equation}\label{Eq_Bounded_FC_1}
\int_0^\infty\Vert f(tT^2)v\Vert^2\frac{dt}{t}=\int_{-\infty}^\infty\Vert\widehat{f}(tT)v\Vert^2\frac{dt}{|t|},\qquad v\in V.
\end{equation}
By \eqref{Eq_FC_T2}, there is $f(tT^2)=\widehat{f}(\sqrt{t}\,T)$, for every $t>0$. With the substitution $\tau=\sqrt{t}$, we can then transform the integral into
\begin{equation*}
\int_0^\infty\Vert f(tT^2)v\Vert^2\frac{dt}{t}=\int_0^\infty\Vert\widehat{f}(\sqrt{t}\,T)v\Vert^2\frac{dt}{t}=2\int_0^\infty\Vert\widehat{f}(\tau T)v\Vert^2\frac{d\tau}{\tau}.
\end{equation*}
Since there is also $\widehat{f}(-\sqrt{t}\,T)=f(tT^2)$ by \eqref{Eq_FC_T2}, we can substitute $\tau=-\sqrt{t}$ in \eqref{Eq_Bounded_FC_1}, and get
\begin{equation*}
\int_0^\infty\Vert f(tT^2)v\Vert^2\frac{dt}{t}=\int_0^\infty\Vert\widehat{f}(-\sqrt{t}\,T)v\Vert^2\frac{dt}{t}=2\int_{-\infty}^0\Vert\widehat{f}(\tau T)v\Vert^2\frac{d\tau}{|\tau|}.
\end{equation*}
Adding these two equations then gives the needed identity \eqref{Eq_Bounded_FC_1}.
\end{proof}

Let us now focus on operators which have a numerical range contained in the right half space. In order to have a large enough domain, we have to add a range condition. In this way we will end up with the following definition of $m$-accretive operators. In Theorem~\ref{thm_Accretive_boundedFC} we will show that $m$-accretive operators are $\frac{\pi}{2}$-sectorial and have a bounded $H^\infty$-functional calculus.

\begin{defi}[$m$-accretive operators]\label{defi_Accretive}
An operator $T\in\mathcal{K}(V)$ is called \textit{$m$-accretive} if it satisfies the following two conditions \medskip
\begin{enumerate}
\item[a)] $\Sc\langle Tv,v\rangle\geq 0$,\qquad $v\in\dom T$, \medskip
\item[b)] $\overline{\ran}(1+T)=V$.
\end{enumerate}
\end{defi}

Let us start with the even more restricted class of bounded, strictly coercive operators.

\begin{lem}\label{lem_Coercive}
Let $T\in\mathcal{B}(V)$ and assume that there exists some $c>0$ such that
\begin{equation}\label{Eq_Coercive}
\Sc\langle Tv,v\rangle\geq c\Vert v\Vert^2,\qquad v\in V.
\end{equation}
Then $T$ is bijective and sectorial of some angle $\omega\in(0,\frac{\pi}{2})$. Moreover, for every $f\in\mathcal{N}^\infty(S_\theta)$, $\theta\in(\frac{\pi}{2},\pi)$, there is $f(T)\in\mathcal{B}(V)$ and admits the representation
\begin{equation}\label{Eq_FC_coercive}
f(T)v=\frac{1}{2\pi}\int_{\partial S_{\frac{\pi}{2}}\cap\mathbb{C}_J}\big((\mathcal{I}^Rs-T)^{-1}-(\mathcal{I}^Rs+T^*)^{-1}\big)vf(s)ds_J,\qquad v\in V.
\end{equation}
\end{lem}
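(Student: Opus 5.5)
Bijectivity comes first, using the coercivity estimate \eqref{Eq_Coercive} directly. By the Clifford Cauchy--Schwarz inequality, $c\Vert v\Vert^2\leq\Sc\langle Tv,v\rangle\leq\Vert Tv\Vert\Vert v\Vert$, so $\Vert Tv\Vert\geq c\Vert v\Vert$. Hence $T$ is injective with closed range. The adjoint $T^*$ satisfies the same estimate, since $\Sc\langle T^*v,v\rangle=\Sc\overline{\langle v,T^*v\rangle}=\Sc\langle Tv,v\rangle$. Therefore $T^*$ is injective, $\ran T$ is dense, and $T$ is bijective.

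For the spectrum, I fix $s=s_0+Js_1\in\mathbb{R}^{n+1}$ and estimate $\Sc\langle(T-\mathcal{I}^Rs)v,v\rangle$. The key computation is
\begin{equation*}
\Sc\langle vs,v\rangle=\Sc\big(\overline{s}\langle v,v\rangle\big)=s_0\Vert v\Vert^2-\Sc\big(Js_1\langle v,v\rangle\big).
\end{equation*}
Because $\overline{\langle v,v\rangle}=\langle v,v\rangle$, the product $J\langle v,v\rangle$ has vanishing scalar part, so only $s_0\Vert v\Vert^2$ survives. This gives $\Sc\langle(T-\mathcal{I}^Rs)v,v\rangle\geq(c-s_0)\Vert v\Vert^2$. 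For $s_0<c$ the operator $T-\mathcal{I}^Rs$ is therefore bounded below. The same holds for its adjoint $T^*-\mathcal{I}^R\overline{s}$, so it is bijective with inverse norm at most $(c-s_0)^{-1}$. Boundedness of $T$ also gives invertibility for $|s|>\Vert T\Vert$ via a Neumann series in $\mathcal{I}^Rs$. Together these place $\sigma_S(T)$ in the compact set $\{s_0\geq c,\ |s|\leq\Vert T\Vert\}$, which lies inside $\overline{S_\omega}$ with $\tan\omega$ roughly $\Vert T\Vert/c$ and $\omega<\frac{\pi}{2}$. The resolvent estimate \eqref{Eq_Resolvent_estimate} outside $S_\varphi$ then follows from continuity and compactness on the bounded part and the Neumann series bound $\sim1/|s|$ at infinity.

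For the representation \eqref{Eq_FC_coercive}, take $f\in\mathcal{N}^\infty(S_\theta)$ with $\theta>\frac{\pi}{2}$. The spectrum is compact, bounded away from $0$, and contained in a sector of angle $<\frac{\pi}{2}$. So I would first show that $f(T)$ from Definition~\ref{defi_omega_Hinfty}~ii) equals the Cauchy integral over a bounded contour surrounding $\sigma_S(T)$, hence is bounded. To do this, I would take the regularizer $e(s)=s/(1+s)^2$ and use the product rule, which gives $e(T)^{-1}(ef)(T)$. Then I would deform the contour from $\partial S_\varphi$ to the imaginary axis $\partial S_{\frac{\pi}{2}}\cap\mathbb{C}_J$, which is allowed since $f$ is holomorphic in $S_\theta\supset\overline{S_{\pi/2}}\setminus\{0\}$. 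On the imaginary axis the integral of $(\mathcal{I}^Rs-T)^{-1}f(s)$ alone is not absolutely convergent. The added term $-(\mathcal{I}^Rs+T^*)^{-1}$ supplies the cancellation. Since $-T^*$ has its spectrum in the left half plane, the contour integral of $(\mathcal{I}^Rs+T^*)^{-1}vf(s)$ over the imaginary axis closed to the right vanishes by Cauchy's theorem, at least for decaying $f$. So adding this term does not change the value for $f\in\mathcal{N}^0$. Moreover, on $s=Jt$ the difference $(\mathcal{I}^Rs-T)^{-1}-(\mathcal{I}^Rs+T^*)^{-1}$ should decay like $|t|^{-2}$ as $|t|\to\infty$, and it stays bounded near $0$ because both $T$ and $T^*$ are boundedly invertible. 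Hence the right-hand side of \eqref{Eq_FC_coercive} converges absolutely for bounded $f$ and is bounded by $C\Vert f\Vert_\infty$. A limiting argument with $f_\varepsilon=f\,e_\varepsilon\to f$, combined with closedness of the $H^\infty$-calculus, would then extend the formula from $\mathcal{N}^0$ to $\mathcal{N}^\infty$.

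I expect the main obstacle to be this cancellation step. It requires making the vanishing of the $T^*$-term rigorous in the Clifford setting, where $(\mathcal{I}^Rs+T^*)^{-1}$ is only $\mathbb{R}$-linear. It also requires verifying the $|t|^{-2}$ decay of the difference, or at least enough decay for absolute convergence. I would try the identity
\begin{equation*}
(\mathcal{I}^Rs-T)^{-1}-(\mathcal{I}^Rs+T^*)^{-1}=(\mathcal{I}^Rs-T)^{-1}(T+T^*)(\mathcal{I}^Rs+T^*)^{-1},
\end{equation*}
in which the two factors $(\mathcal{I}^Rs-T)^{-1}$ and $(\mathcal{I}^Rs+T^*)^{-1}$ each contribute $O(1/|s|)$ and the middle factor $T+T^*$ is bounded. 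I would verify this identity first, paying attention to how $\mathcal{I}^Rs$ fails to commute with $T$ in general.
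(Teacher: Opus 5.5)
Your proposal is correct and follows essentially the same route as the paper. Coercivity gives bijectivity of $\mathcal{I}^Rs-T$ for $s_0<c$ with bound $(c-s_0)^{-1}$; you simply unpack the Clifford Lax--Milgram lemma the paper cites into the elementary fact that both $T-\mathcal{I}^Rs$ and its adjoint $T^*-\mathcal{I}^R\overline{s}$ are bounded below. This is combined with the Neumann-series bound to get sectoriality, and the representation comes from subtracting the vanishing $T^*$-integral, using $A^{-1}-B^{-1}=A^{-1}(B-A)B^{-1}$ for the $|t|^{-2}$ decay, and passing from $\mathcal{N}^0$ to $\mathcal{N}^\infty$ by a regularizing limit.

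The obstacle you anticipate is harmless. That identity needs no commutation at all, and right-linear operators commute with $\mathcal{I}^Rs$ anyway. The Cauchy-theorem step becomes the classical one once $V$ is viewed, via right multiplication by $\mathbb{C}_J$, as a complex Banach space: there $T^*$ is linear and $(\mathcal{I}^Rs+T^*)^{-1}$ is an ordinary holomorphic resolvent.
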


\begin{proof}
In the \textit{first part} we will show that $T$ is bijective and sectorial. To do so, let us consider for every fixed $s\in\mathbb{R}^{n+1}$ with $s_0<c$, the form
\begin{equation*}
q_s(u,v):=\langle(T-\mathcal{I}^Rs)u,v\rangle,\qquad u,v\in V.
\end{equation*}
It is straightforward to verify that $q_s$ is $\mathbb{R}$-linear in the first, and right linear in the second argument. Moreover, the form is bounded, since
\begin{equation*}
|q_s(u,v)|\leq 2^{\frac{n}{2}}\Vert(T-\mathcal{I}^Rs)u\Vert\Vert v\Vert\leq 2^{\frac{n}{2}}(\Vert T\Vert+|s|)\Vert u\Vert\Vert v\Vert,\qquad u,v\in V.
\end{equation*}
Finally, thee scalar part of $q_s$ is elliptic, because for every $v\in V$ we have
\begin{equation*}
\Sc q_s(v,v)=\Sc\langle Tv,v\rangle-\Sc\langle vs,v\rangle=\Sc\langle Tv,v\rangle-s_0\Vert v\Vert^2\geq(c-s_0)\Vert v\Vert^2.
\end{equation*}
In the second equality we used
\begin{equation}\label{Eq_Coercive_2}
\Sc\langle vs,v\rangle=s_0\Vert v\Vert^2,
\end{equation}
which is a consequence of the property $\Sc(\overline{p}\,\overline{q})=\Sc(pq)$ of Clifford numbers, and the resulting
\begin{equation*}
\Sc(s\langle v,v\rangle)=\Sc(\overline{s}\,\overline{\langle v,v\rangle})=\Sc(\overline{s}\langle v,v\rangle).
\end{equation*}
Now we can apply the Lax-Milgram lemma from \cite[Lemma~2.2]{Gradient}. Note that $q_s$ is not right antilinear in the first argument, but it turns out $\mathbb{R}$-linearity is sufficient in the proof of \cite[Lemma~2.2]{Gradient}. Hence, we deduce that $\mathcal{I}^Rs-T$ is bijective for every $s\in\mathbb{R}^{n+1}$ with $s_0<c$, and the inverse operator is bounded by
\begin{equation}\label{Eq_Coercive_1}
\Vert(\mathcal{I}^Rs-T)^{-1}\Vert\leq\frac{1}{c-s_0},\qquad s\in\mathbb{R}^{n+1},\,s_0<c.
\end{equation}
This shows that $s\in\rho_S(T)$ and we have proven that \medskip

\begin{minipage}{0.35\textwidth}
\begin{center}
\begin{tikzpicture}[scale=0.8]
\fill[black!20] (0,0) circle (1.5cm);
\fill[black!20] (0.5,1.9)--(2,1.9)--(2,-1.9)--(0.5,-1.9);
\fill[black!40] (0.5,1.41) arc (70.53:-70.53:1.5);
\draw (0,0) circle (1.5cm);
\draw[->] (0,0)--(-1.3,0.75);
\draw(-1.3,0.8) node[anchor=west] {\scriptsize{$\Vert T\Vert$}};
\draw (0.5,1.9)--(0.5,-1.9) node[anchor=north] {$c$};
\draw (1.02,0) node[anchor=north] {\scriptsize{$\sigma_S(T)$}};
\draw[thick] (0.707,-2)--(0,0)--(0.707,2);
\draw (0.3,-0.05) node[anchor=south] {\small{$\omega$}};
\draw (0.55,0) arc (0:70.52:0.55);
\draw[->] (-2,0)--(2.5,0) node[anchor=south] {\scriptsize{$s_0$}};
\draw[->] (0,-2)--(0,2);
\draw (0,1.8) node[anchor=east] {\scriptsize{$\Im(s)$}};
\end{tikzpicture}
\end{center}
\end{minipage}
\begin{minipage}{0.64\textwidth}
\begin{equation}\label{Eq_Coercive_12}
\sigma_S(T)\subseteq\big\{s\in\mathbb{R}^{n+1}\;\big|\;s_0\geq c\big\}.
\end{equation}
Since $T$ is a bounded operator, we also know that $\sigma_S(T)$ is contained in the ball of radius $\Vert T\Vert$. Together with \eqref{Eq_Coercive_12} this means that the spectrum is included in the sector
\begin{equation*}
\sigma_S(T)\subseteq\overline{S_\omega},
\end{equation*}
for the choice $\omega=\arccos(\frac{c}{\Vert T\Vert})\in(0,\frac{\pi}{2})$.
\end{minipage}

\medskip In order to verify the resolvent estimate \eqref{Eq_Resolvent_estimate}, we get $\Vert(\mathcal{I}^Rs-T)v\Vert\geq(|s|-\Vert T\Vert)\Vert v\Vert$ from the triangle inequality, which then leads to
\begin{equation*}
\Vert(\mathcal{I}^Rs-T)^{-1}\Vert\leq\frac{1}{|s|-\Vert T\Vert},\qquad s\in\mathbb{R}^{n+1},\,|s|>\Vert T\Vert.
\end{equation*}
Combining this inequality with the resolvent estimate \eqref{Eq_Coercive_1}, gives for every $\varphi\in(\omega,\pi)$ some constant $C_\varphi\geq 0$, with
\begin{equation}\label{Eq_Coercive_3}
\Vert(\mathcal{I}^Rs-T)^{-1}\Vert\leq\frac{C_\varphi}{1+|s|},\qquad s\in\mathbb{R}^{n+1}\setminus S_\varphi.
\end{equation}
This is an even better estimate than the required \eqref{Eq_Resolvent_estimate} for the sectoriality. \medskip

In the \textit{second part} of the proof we will verify the integral representation \eqref{Eq_FC_coercive}. Let us start with $f\in\mathcal{N}^0(S_\theta)$, for which the left hand side of \eqref{Eq_FC_coercive} is the $\omega$-functional calculus \medskip

\begin{minipage}{0.35\textwidth}
\begin{center}
\begin{tikzpicture}[scale=0.8]
\fill[black!20] (0,0)--(-1.15,1.992) arc (120:-120:2.3);
\draw (-1.15,-1.992)--(0,0)--(-1.15,1.992);
\draw (0.95,-1.8) node[anchor=center] {\Large{$S_\theta$}};
\draw[fill=black!40] (0.5,1.41) arc (70.53:-70.53:1.5)--(0.5,1.41);
\draw (1.02,0) node[anchor=south] {\scriptsize{$\sigma_S(T)$}};
\draw[fill=black!40] (-0.5,1.41) arc (109.47:250.53:1.5)--(-0.5,1.41);
\draw (-1.2,0) node[anchor=south] {\scriptsize{$\sigma_S(-T^*)$}};
\draw[->] (-2.3,0)--(2.5,0) node[anchor=south] {\scriptsize{$s_0$}};
\draw[->] (0,-2.7)--(0,2.7);
\draw (0,2.5) node[anchor=west] {\scriptsize{$\Im(s)$}};
\draw[dashed] (0.5,2.3)--(0.5,-2.4) node[anchor=north] {$c$};
\draw[dashed] (-0.5,2.3)--(-0.5,-2.4) node[anchor=north] {-$c$};
\draw[ultra thick] (0,-2.3)--(0,2.3);
\draw[ultra thick,->] (0,2.3)--(0,1.4);
\draw[ultra thick,->] (0,0)--(0,-1.8);
\draw (-0.05,1.8) node[anchor=west] {$\partial S_{\frac{\pi}{2}}$};
\end{tikzpicture}
\end{center}
\end{minipage}
\begin{minipage}{0.64\textwidth}
\begin{equation}\label{Eq_Coercive_4}
f(T)v=\frac{1}{2\pi}\int_{\partial S_{\frac{\pi}{2}}\cap\mathbb{C}_J}(\mathcal{I}^Rs-T)^{-1}vf(s)ds_J.
\end{equation}
Next, since we know by \cite[Theorem~3.5]{ADJOINT}, that
\begin{equation*}
\sigma_S(-T^*)=-\sigma_S(T)\subseteq\big\{s\in\mathbb{R}^{n+1}\;\big|\;s_0\leq-c,\,|s|\leq\Vert T\Vert\big\},
\end{equation*}
there is no spectrum of $-T^*$ in the right half plane, where $f$ is holomorphic. Hence there vanishes the integral
\begin{equation}\label{Eq_Coercive_5}
\frac{1}{2\pi}\int_{\partial S_{\frac{\pi}{2}}\cap\mathbb{C}_J}(\mathcal{I}^Rs+T^*)^{-1}vf(s)ds_J=0.
\end{equation}
\end{minipage}

\medskip Substracting now \eqref{Eq_Coercive_5} from \eqref{Eq_Coercive_4}, gives the stated integral \eqref{Eq_FC_coercive} for functions $f\in\mathcal{N}^0(S_\theta)$. \medskip

Parametrising the boundary $\partial S_{\frac{\pi}{2}}\cap\mathbb{C}_J$ by $\gamma(t)=-tJ$, $t\in\mathbb{R}$, this integral can be written as
\begin{equation}\label{Eq_Coercive_6}
f(T)v=\frac{1}{2\pi}\int_\mathbb{R}\underbrace{\big((t\mathcal{I}^RJ+T^*)^{-1}-(t\mathcal{I}^RJ-T)^{-1}\big)}_{=:A(t)}vf(tJ)dt.
\end{equation}
In order to extend this integral identity to all $f\in\mathcal{N}^\infty(S_\theta)$, we need a closer analysis of the operator $A(t)$. Since there is $(\mathcal{I}^RJ)^*=\mathcal{I}^R\overline{J}=-\mathcal{I}^RJ$, we know that $A(t)=A(t)^*$ is self-adjoint, for every $t\in\mathbb{R}$. Moreover, we can rewrite $A(t)$ as
\begin{equation}\label{Eq_Coercive_7}
A(t)=-(t\mathcal{I}^RJ-T)^{-1}(T+T^*)(t\mathcal{I}^RJ+T^*)^{-1}.
\end{equation}
This allows us to show the non-negativity
\begin{align}
\Sc\langle A(t)v,v\rangle&=\Sc\big\langle(T+T^*)(t\mathcal{I}^RJ+T^*)^{-1}v,(t\mathcal{I}^RJ+T^*)^{-1}v\big\rangle \notag \\
&=2\Sc\big\langle T(T^*+t\mathcal{I}^RJ)^{-1}v,(T^*+t\mathcal{I}^RJ)^{-1}v\big\rangle \notag \\
&\geq 2c\Vert(T^*+t\mathcal{I}^RJ)^{-1}v\Vert^2\geq 0, \label{Eq_Coercive_8}
\end{align}
where in the last line we used the coercivity \eqref{Eq_Coercive}. The representation \eqref{Eq_Coercive_7}, in combination with the resolvent estimate \eqref{Eq_Coercive_3}, also allows us to estimate $A(t)$ by
\begin{equation}\label{Eq_Coercive_9}
\Vert A(t)\Vert\leq\frac{2C_{\frac{\pi}{2}}^2\Vert T\Vert^2}{(1+|t|)^2},\qquad t\in\mathbb{R}.
\end{equation}
Let now $f\in\mathcal{N}^\infty(S_\theta)$, $\theta\in(\frac{\pi}{2},\pi)$. First note, since $T$ is bounded and bijective, $f(T)$ is a bounded operator because the $H^\infty$-functional calculus coincides with the bounded $S$-functional calculus from \cite[Section 7.4]{FJBOOK}. Then, we consider for every $n\in\mathbb{N}$ the regularizer
\begin{equation}\label{Eq_Coercive_10}
r_n(s):=\frac{ns}{(s+n)(s+\frac{1}{n})},\qquad s\in S_\theta.
\end{equation}
Then there is $r_nf\in\mathcal{N}^0(S_\theta)$, and by \eqref{Eq_Coercive_6} there holds
\begin{equation*}
(r_nf)(T)v=\frac{1}{2\pi}\int_\mathbb{R}A(t)vr_n(tJ)f(Jt)dt,\qquad n\in\mathbb{N}.
\end{equation*}
In \eqref{Eq_Coercive_9} we have already calculated an integrable upper bound of $\Vert A(t)\Vert$, and the functions $r_n$ are uniformly bounded by
\begin{equation*}
|r_n(tJ)|=\frac{n|t|}{|tJ+n||tJ+\frac{1}{n}|}\leq 1,\qquad t\in\mathbb{R},\,n\in\mathbb{N}.
\end{equation*}
Hence, we can use the dominated convergence theorem to carry the limit inside the integral
\begin{equation*}
\lim\limits_{n\rightarrow\infty}(r_nf)(T)v=\frac{1}{2\pi}\lim\limits_{n\rightarrow\infty}\int_\mathbb{R}A(t)vr_n(tJ)f(tJ)dt=\frac{1}{2\pi}\int_\mathbb{R}A(t)vf(tJ)dt.
\end{equation*}
Moreover, there also converges
\begin{equation*}
r_n(T)v=\underbrace{n(T+n)^{-1}}_{\overset{n\rightarrow\infty}{\longrightarrow}1}\underbrace{T\Big(T+\frac{1}{n}\Big)^{-1}}_{\overset{n\rightarrow\infty}{\longrightarrow}1} v \overset{n\rightarrow\infty}{\longrightarrow}v.
\end{equation*}
Combining these two limits with the product rule $(r_nf)(T)=f(T)r_n(T)$ of the $H^\infty$-functional calculus \cite[Theorem~5.7]{MS24}, it follows from the boundedness of the operator $f(T)$ that for every $v\in V$, there is
\begin{equation*}
f(T)v=\frac{1}{2\pi}\int_\mathbb{R}A(t)vf(tJ)dt. \qedhere
\end{equation*}
\end{proof}

\begin{lem}
Let $T\in\mathcal{K}(V)$ be $m$-accretive. Then $T$ is sectorial of angle $\frac{\pi}{2}$.
\end{lem}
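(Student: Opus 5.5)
We need to show: $\sigma_S(T)\subseteq\overline{S_{\pi/2}}$, i.e. every $s$ with $s_0<0$ lies in $\rho_S(T)$, and the resolvent estimate $\Vert(\mathcal{I}^Rs-T)^{-1}\Vert\leq C_\varphi/|s|$ holds for $s\notin S_\varphi$, $\varphi\in(\frac{\pi}{2},\pi)$. The plan is to reduce everything to the fact that $T-\mathcal{I}^Rs$ is bounded below with a constant independent of $\Im(s)$ when $s_0<0$, and then prove surjectivity from the range condition b).

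\emph{Lower bound.} For $v\in\dom T$ and $s\in\mathbb{R}^{n+1}$ with $s_0<0$, the identity \eqref{Eq_Coercive_2} gives $\Sc\langle vs,v\rangle=s_0\Vert v\Vert^2$. Combined with a), this yields
\begin{equation*}
\Sc\langle(T-\mathcal{I}^Rs)v,v\rangle\geq -s_0\Vert v\Vert^2=|s_0|\Vert v\Vert^2 .
\end{equation*}
The left-hand side is at most $\Vert(T-\mathcal{I}^Rs)v\Vert\Vert v\Vert$ by the real Cauchy--Schwarz inequality for $\Sc\langle\cdot,\cdot\rangle$. Hence $\Vert(\mathcal{I}^Rs-T)v\Vert\geq|s_0|\Vert v\Vert$. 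So $\mathcal{I}^Rs-T$ is injective, and since $T$ is closed and $\mathcal{I}^Rs$ is bounded, it has closed range.

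\emph{Surjectivity (main obstacle).} Condition b) only gives a dense range for $s=-1$. Closedness of the range then upgrades this to $\ran(1+T)=V$, so $-1\in\rho_S(T)$ by the characterization $\rho_S(T)=\{s\mid\mathcal{I}^Rs-T\text{ bijective}\}$ from \cite[Theorem~2.4]{ADJOINT}. To reach all $s$ with $s_0<0$, I would use openness of $\rho_S(T)$ combined with the uniform bound $\Vert(\mathcal{I}^Rs-T)^{-1}\Vert\leq 1/|s_0|$ in a continuity argument on the connected open half-space $H=\{s_0<0\}$. Let $A=\{s\in H\mid\mathcal{I}^Rs-T\text{ is bijective}\}$.

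Suppose $s\in A$ and $|p-s|<|s_0|$. Write $\mathcal{I}^Rp-T=(\mathcal{I}^Rs-T)\big(1+(\mathcal{I}^Rs-T)^{-1}\mathcal{I}^R(p-s)\big)$. Since $p-s\in\mathbb{R}^{n+1}$, we have $\Vert\mathcal{I}^R(p-s)\Vert=|p-s|$, so a Neumann series (in the real-linear sense, which suffices for bijectivity) shows $p\in A$. The radius $|s_0|$ is locally uniform, so $A$ is open, and the same estimate shows $A$ is relatively closed in $H$. Connectedness of $H$ then gives $A=H$. Note that $(\mathcal{I}^Rs-T)^{-1}$ is only $\mathbb{R}$-linear, but that is harmless here.

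\emph{Resolvent estimate.} For $\varphi\in(\frac{\pi}{2},\pi)$ and $s\notin S_\varphi\cup\{0\}$ we have $|\Arg s|\geq\varphi$, so $|s_0|\geq|s||\cos\varphi|$. Therefore
\begin{equation*}
\Vert(\mathcal{I}^Rs-T)^{-1}\Vert\leq\frac{1}{|\cos\varphi|\,|s|},
\end{equation*}
which is \eqref{Eq_Resolvent_estimate} with $C_\varphi=1/|\cos\varphi|$. Finally, every $s$ with $s_0<0$ lies in $\rho_S(T)$, and $\sigma_S(T)$ is closed, so $\sigma_S(T)\subseteq\{s_0\geq0\}=\overline{S_{\pi/2}}$. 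This completes the sectoriality of angle $\frac{\pi}{2}$.
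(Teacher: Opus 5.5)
Your proof is correct, and its overall structure matches the paper's: the accretivity bound $\Vert(\mathcal{I}^Rs-T)v\Vert\geq|s_0|\Vert v\Vert$ on the open left half-space, closedness of the range, the seed point $-1$ obtained from condition b), a connectedness argument on $\{s_0<0\}$, and the resolvent estimate from $|s_0|\geq|s||\cos\varphi|$.

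The difference is in how you show that the set of surjectivity points is open and relatively closed. The paper works with orthogonal complements in the real Hilbert space. For openness it takes $w\perp\ran(\mathcal{I}^Rp-T)$, writes $w=(\mathcal{I}^Rs-T)v$, and uses a Pythagorean identity to force $w=0$. It then proves that the complement is open by a second, analogous computation with $w\in\ran(\mathcal{I}^Rp-T)\cap\ran(\mathcal{I}^Rs-T)^\perp$.

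You instead factor $\mathcal{I}^Rp-T=(\mathcal{I}^Rs-T)(1+K)$ with $K=(\mathcal{I}^Rs-T)^{-1}\mathcal{I}^R(p-s)$ and invert $1+K$ by a Neumann series. Strictly, the factorization should be read on $\dom T$: since $\ran K\subseteq\dom T$, the map $1+K$ restricts to a bijection of $\dom T$ onto itself, which is immediate. You then note that the perturbation radius $|s_0|$ depends only on the real part, so it stays bounded away from zero near any point of the half-space. This gives relative closedness from the same estimate.

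Your route is the standard perturbation argument. Its open/closed step uses only the a priori bound $1/|s_0|$ and no orthogonality, and one estimate replaces the paper's two. The paper's route never constructs an inverse at the perturbed point and treats the non-surjective points directly. The price is that it relies on the Hilbert-space geometry.
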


\begin{proof}
For an easier notation let us use the abbreviation
\begin{equation}\label{Eq_Accretive_5}
\mathbb{R}^{n+1}_\pm:=\big\{s\in\mathbb{R}^{n+1}\;\big|\;\pm s_0>0\big\}.
\end{equation}
First, for every $s\in\mathbb{R}^{n+1}_-$, it follows from Definition~\ref{defi_Accretive}~a) and from \eqref{Eq_Coercive_2}, that
\begin{equation*}
\Sc\langle(\mathcal{I}^Rs-T)v,v\rangle=\Sc\langle vs,v\rangle-\Sc\langle Tv,v\rangle\leq s_0\Vert v\Vert^2,\qquad v\in\dom T.
\end{equation*}
Since $s_0<0$, this inequality shows that $\mathcal{I}^Rs-T$ is injective and the inverse is bounded by
\begin{equation}\label{Eq_Accretive_3}
\Vert(\mathcal{I}^Rs-T)^{-1}w\Vert\leq\frac{\Vert w\Vert}{-s_0},\qquad w\in\ran(\mathcal{I}^Rs-T).
\end{equation}
However, we do not yet know if $\mathcal{I}^Rs-T$ is surjective. At least, from $(\mathcal{I}^Rs-T)^{-1}$ being closed and bounded, we know that
\begin{equation}\label{Eq_Accretive_4}
\ran(\mathcal{I}^Rs-T)\text{ is closed}.
\end{equation}
Let us now consider the set
\begin{equation}\label{Eq_Accretive_2}
M:=\big\{s\in\mathbb{R}^{n+1}_-\;\big|\;\ran(\mathcal{I}^Rs-T)=V\big\},
\end{equation}
and show that $M$ as well as its complement $\mathbb{R}^{n+1}_-\setminus M$ are open. \medskip

$\circ$\;\;Let $s\in M$ and consider some arbitrary $p\in\mathbb{R}^{n+1}_-$ with $|p-s|\leq\frac{1}{2\Vert(\mathcal{I}^Rs-T)^{-1}\Vert}$. In order to show that $p\in M$, let us take $w\in\ran(\mathcal{I}^Rp-T)^\perp$, where the orthogonal complement is understood with respect to the real inner product $\Sc\langle\cdot,\cdot\rangle$. Since $\ran(\mathcal{I}^Rs-T)=V$, we can write $w=(\mathcal{I}^Rs-T)v$, for some $v\in\dom T$. From the orthogonality $\Sc\langle w,(\mathcal{I}^Rp-T)v\rangle=0$, there follows from the Pythagorean theorem the identity
\begin{equation}\label{Eq_Accretive_1}
\Vert w\Vert^2=|p-s|^2\Vert v\Vert^2-\Vert(\mathcal{I}^Rp-T)v\Vert^2.
\end{equation}
The second term on the right hand side now admits the lower bound
\begin{equation*}
\Vert(\mathcal{I}^Rp-T)v\Vert\geq\Vert(\mathcal{I}^Rs-T)v\Vert-|p-s|\Vert v\Vert\geq\frac{\Vert v\Vert}{\Vert(\mathcal{I}^Rs-T)^{-1}\Vert}-|p-s|\Vert v\Vert.
\end{equation*}
Plugging this into \eqref{Eq_Accretive_1} gives, after some simplification,
\begin{equation*}
\Vert w\Vert^2\leq\bigg(2|p-s|-\frac{1}{\Vert(\mathcal{I}^Rs-T)^{-1}\Vert}\bigg)\frac{\Vert v\Vert^2}{\Vert(\mathcal{I}^Rs-T)^{-1}\Vert}.
\end{equation*}
Since we chose $|p-s|\leq\frac{1}{2\Vert(\mathcal{I}^Rs-T)^{-1}\Vert}$, the right hand side of this inequality is non-positive. Consequently $w=0$, and we have proven that $\ran(\mathcal{I}^Rp-T)^\perp=\{0\}$, i.e., $\ran(\mathcal{I}^Rp-T)=V$, for every $p$ in a neighbourhood of $s$. This shows that $M$ is open. \medskip

$\circ$\;\;Let $s\in\mathbb{R}^{n+1}_-\setminus M$ and consider some arbitrary $p\in\mathbb{R}^{n+1}_-$ with $|p-s|\leq\frac{1}{\Vert(\mathcal{I}^Rs-T)^{-1}\Vert}$. In order to show that $p\in\mathbb{R}^{n+1}\setminus M$, let us take $w\in\ran(\mathcal{I}^Rp-T)\cap\ran(\mathcal{I}^Rs-T)^\perp$. In particular, we can write $w=(\mathcal{I}^Rp-T)v$, for some $v\in\dom T$. Furthermore, from the orthogonality $\Sc\langle w,(\mathcal{I}^Rs-T)v\rangle=0$ we get a similar identity as in \eqref{Eq_Accretive_1}, and there follows the estimate
\begin{equation*}
\Vert w\Vert^2=|p-s|^2\Vert v\Vert^2-\Vert(\mathcal{I}^Rs-T)v\Vert^2\leq\bigg(|p-s|^2-\frac{1}{\Vert(\mathcal{I}^Rs-T)^{-1}\Vert^2}\bigg)\Vert v\Vert^2.
\end{equation*}
Since we chose $|p-s|\leq\frac{1}{\Vert(\mathcal{I}^Rs-T)^{-1}\Vert}$, the right hand side of this inequality is non-positive. Consequently $w=0$, and we have proven that $\ran(\mathcal{I}^Rp-T)\cap\ran(\mathcal{I}^Rs-T)^\perp=\{0\}$, for every $p$ in a neighbourhood of $s$. However, since $\ran(\mathcal{I}^Rs-T)\neq V$, this is only possible if also $\ran(\mathcal{I}^Rp-T)\neq V$. This shows that $\mathbb{R}^{n+1}_-\setminus M$ is open. \medskip

Altogether, the set $M$ in \eqref{Eq_Accretive_2} is open and has an open complement $\mathbb{R}^{n+1}_-\setminus M$. Since $\mathbb{R}^{n+1}_-$ is connected, this is only possible of either $M=\emptyset$ or $\mathbb{R}^{n+1}_-\setminus M=\emptyset$. However, by \eqref{Eq_Accretive_4} and Definition~\ref{defi_Accretive}~b), there is $-1\in M$, and the only remaining possibility is $\mathbb{R}^{n+1}_-\setminus M=\emptyset$. This proves that $\ran(\mathcal{I}^Rs-T)=V$ for every $s\in\mathbb{R}^{n+1}_-$, and consequently $\mathbb{R}^{n+1}_-\subseteq\rho_S(T)$. \medskip

The resolvent estimate \eqref{Eq_Resolvent_estimate} follows immediately  from \eqref{Eq_Accretive_3}. Indeed, for every $\varphi\in(\frac{\pi}{2},\pi)$ and every $s\in\mathbb{R}^{n+1}\setminus(S_\varphi\cup\{0\})$, there is
\begin{equation*}
\Vert(\mathcal{I}^Rs-T)^{-1}\Vert\leq\frac{1}{-s_0}=\frac{1}{-|s|\cos(\Arg(s))}\leq\frac{1}{-|s|\cos(\varphi)}. \qedhere
\end{equation*}
\end{proof}

\begin{lem}\label{lem_Teps}
Let $T\in\mathcal{K}(V)$ be sectorial of angle $\omega\in(0,\pi)$. Then, for every $\varepsilon\in(0,1]$ the bounded operator
\begin{equation}\label{Eq_Teps}
T_\varepsilon:=(\varepsilon+T)(1+\varepsilon T)^{-1}
\end{equation}
is sectorial of the same angle $\omega$. Moreover, for every $\varphi\in(\omega,\pi)$, there exists a constant $C_\varphi\geq 0$, independent of $\varepsilon\in(0,1]$, such that
\begin{equation}\label{Eq_Teps_resolvent}
\Vert(\mathcal{I}^Rs-T_\varepsilon)^{-1}\Vert\leq\frac{C_\varphi}{|s|},\qquad s\in\mathbb{R}^{n+1}\setminus(S_\varphi\cup\{0\}).
\end{equation}
\end{lem}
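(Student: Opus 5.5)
We reduce the resolvent of $T_\varepsilon$ to resolvents of $T$ through a Möbius-type identity. Boundedness of $T_\varepsilon$ comes first. Since $-1/\varepsilon\in\mathbb{R}^{n+1}\setminus\overline{S_\omega}$ is real, the operator $(1+\varepsilon T)^{-1}=\frac{1}{\varepsilon}(\frac{1}{\varepsilon}+T)^{-1}$ is bounded, right linear, and maps into $\dom T$. It commutes with $T$ on $\dom T$, which gives
\begin{equation*}
T_\varepsilon=\tfrac{1}{\varepsilon}\big(1-(1-\varepsilon^2)(1+\varepsilon T)^{-1}\big)\in\mathcal{B}(V).
\end{equation*}
Real scalars commute with $\mathcal{I}^Rs$ and with $T$, but $\mathcal{I}^Rs$ does not commute with $T$. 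The algebra must therefore be done carefully: work only with $\mathcal{I}^Rs$ and real-coefficient expressions in $T$, and note that $\mathcal{I}^Rs$ commutes with every right linear operator. Indeed, $T(vs)=(Tv)s$ says exactly $T\mathcal{I}^Rs=\mathcal{I}^RsT$ on $\dom T$.

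This commutation makes the computation essentially complex-scalar. For $v\in V$ and $u:=(1+\varepsilon T)^{-1}v$ we get
\begin{equation*}
(\mathcal{I}^Rs-T_\varepsilon)v=\big(\mathcal{I}^Rs(1+\varepsilon T)-(\varepsilon+T)\big)u=\big((\mathcal{I}^Rs\varepsilon-1)T+\mathcal{I}^Rs-\varepsilon\big)u .
\end{equation*}
For $s\neq1/\varepsilon$, the operator $\mathcal{I}^R(s\varepsilon-1)$ is invertible with inverse $\mathcal{I}^R(s\varepsilon-1)^{-1}$, since paravectors are invertible and right multiplication by $p^{-1}$ inverts right multiplication by $p$. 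Hence
\begin{equation*}
\mathcal{I}^Rs-T_\varepsilon=-\mathcal{I}^R(s\varepsilon-1)\,(\mathcal{I}^Rq-T)\,(1+\varepsilon T)^{-1},\qquad q:=(s-\varepsilon)(1-\varepsilon s)^{-1},
\end{equation*}
using that $\mathcal{I}^Rp\,\mathcal{I}^Rr=\mathcal{I}^R(rp)$ and that $s$, $s-\varepsilon$ and $(1-\varepsilon s)^{-1}$ lie in the same plane $\mathbb{C}_J$, so they commute. Note that $q$ is again a paravector in $\mathbb{C}_J$. Whenever $q\in\rho_S(T)$, we can invert and obtain
\begin{equation*}
(\mathcal{I}^Rs-T_\varepsilon)^{-1}=-(1+\varepsilon T)(\mathcal{I}^Rq-T)^{-1}\mathcal{I}^R(s\varepsilon-1)^{-1}.
\end{equation*}
To make this bounded, rewrite $(1+\varepsilon T)(\mathcal{I}^Rq-T)^{-1}=-\varepsilon+(1+\varepsilon\mathcal{I}^Rq)(\mathcal{I}^Rq-T)^{-1}$.

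Next comes the geometry. In $\mathbb{C}_J$, the map $s\mapsto q=\frac{s-\varepsilon}{1-\varepsilon s}$ is a real Möbius map with inverse $q\mapsto\frac{q+\varepsilon}{1+\varepsilon q}$. It sends $[0,\infty]$ to itself and preserves the sectors $S_\varphi$; this is clear in complex form because $\frac{q+\varepsilon}{1+\varepsilon q}$ is a positive combination along the segment from $q$ to $1/\varepsilon$ in the Riemann sphere sense. Consequently, $s\notin S_\varphi$ implies $q\notin S_\varphi$ (with $q=\infty$ exactly when $s=1/\varepsilon$, which lies in $S_\varphi$ anyway). This yields $\sigma_S(T_\varepsilon)\subseteq\overline{S_\omega}$; the point $s=1/\varepsilon$ is handled by the fact that $\overline{S_\omega}$ contains it. A possible issue at $q=0$, i.e.\ $s=\varepsilon\in S_\varphi$, is excluded because $q=0$ never arises from $s\notin S_\varphi$.

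The main obstacle is the uniform estimate. Using the sectorial bound $\Vert(\mathcal{I}^Rq-T)^{-1}\Vert\le C_\varphi/|q|$ and $\Vert\mathcal{I}^Rp\Vert=|p|$, we get for $s\notin S_\varphi$
\begin{equation*}
\Vert(\mathcal{I}^Rs-T_\varepsilon)^{-1}\Vert\le\frac{\varepsilon+|1+\varepsilon q|\,C_\varphi/|q|}{|1-\varepsilon s|}=\frac{\varepsilon}{|1-\varepsilon s|}+C_\varphi\frac{1-\varepsilon^2}{|s-\varepsilon|\,|1-\varepsilon s|}\cdot\frac{|1-\varepsilon s|}{1}\cdot\frac{1}{|1-\varepsilon s|},
\end{equation*}
where we used $1+\varepsilon q=\frac{1-\varepsilon^2}{1-\varepsilon s}$ and $|q|=\frac{|s-\varepsilon|}{|1-\varepsilon s|}$. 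Simplifying, the bound becomes $\frac{\varepsilon}{|1-\varepsilon s|}+\frac{C_\varphi(1-\varepsilon^2)}{|s-\varepsilon|}$. It then remains to use that for $s\notin S_\varphi$ with $\varphi<\pi$ one has $|s-\varepsilon|\ge c_\varphi\max(|s|,\varepsilon)$ and $|1-\varepsilon s|\ge c_\varphi\max(1,\varepsilon|s|)$, where $c_\varphi=\min(1,\sin\varphi)$ comes from the distance of a positive real point to the complement of the sector. This gives $\frac{\varepsilon}{|1-\varepsilon s|}\le\frac{1}{c_\varphi|s|}$ and $\frac{1}{|s-\varepsilon|}\le\frac{1}{c_\varphi|s|}$, so \eqref{Eq_Teps_resolvent} holds with a constant independent of $\varepsilon$. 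Checking these elementary planar distance inequalities, and the commutation bookkeeping with $\mathcal{I}^R$, is where I expect the care to be needed.
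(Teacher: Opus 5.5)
Your proposal is correct and is essentially the paper's proof. It uses the same factorization $\mathcal{I}^Rs-T_\varepsilon=(1-\varepsilon\mathcal{I}^Rs)(\mathcal{I}^Rq-T)(1+\varepsilon T)^{-1}$ with $q=\frac{s-\varepsilon}{1-\varepsilon s}$, the same one-sided fact that $s\notin S_\varphi$ forces $q\notin S_\varphi$, and the same $\varepsilon$-uniform lower bounds on $|1-\varepsilon s|$ and $|s-\varepsilon|$.

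For the sector step, the paper writes $q=as-b$ with $a\geq 0$, $b>0$, which is cleaner than your M\"obius/``segment'' heuristic. Note also that $s\mapsto q$ does not map $S_\varphi$ into itself, since points near $1/\varepsilon$ go to all arguments; only the one-sided implication holds.

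There are two small slips. First, your ``Simplifying'' step drops a factor $|1-\varepsilon s|^{-1}$: the correct term is $C_\varphi(1-\varepsilon^2)/(|s-\varepsilon|\,|1-\varepsilon s|)$. This is harmless because $|1-\varepsilon s|\geq c_\varphi$. Second, your remark that $\mathcal{I}^Rs$ does not commute with $T$ is false and is contradicted by your very next sentence, although your computation never needs this commutation.
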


\begin{proof}
For every $s\in\mathbb{R}^{n+1}\setminus\overline{S_\omega}$, we can write
\begin{equation}\label{Eq_Teps_1}
\mathcal{I}^Rs-T_\varepsilon=(1-\varepsilon\mathcal{I}^Rs)\Big(\mathcal{I}^R\frac{s-\varepsilon}{1-\varepsilon s}-T\Big)(1+\varepsilon T)^{-1}.
\end{equation}
We can moreover rewrite
\begin{equation}\label{Eq_Teps_2}
\frac{s-\varepsilon}{1-\varepsilon s}=\frac{(1-\varepsilon^2)s}{|1-\varepsilon s|^2}-\varepsilon\Big(1+\frac{(1-\varepsilon^2)|s|^2}{|1-\varepsilon s|^2}\Big).
\end{equation}
This representation shows that $\frac{s-\varepsilon}{1-\varepsilon s}$ is of the form $as-b$, with $a\geq 0$, $b>0$. Since $s\in\mathbb{R}^{n+1}\setminus\overline{S_\omega}$, this representation shows that also $\frac{s-\varepsilon}{1-\varepsilon s}\in\mathbb{R}^{n+1}\setminus\overline{S_\omega}$. Since $\mathbb{R}^{n+1}\setminus\overline{S_\omega}\subseteq\rho_S(T)$, the operator $\mathcal{I}^Rs-T_\varepsilon$ in \eqref{Eq_Teps_1} is bijective for every $s\in\mathbb{R}^{n+1}\setminus\overline{S_\omega}$, with inverse given by
\begin{align*}
(\mathcal{I}^Rs-T_\varepsilon)^{-1}&=(1+\varepsilon T)\Big(\mathcal{I}^R\frac{s-\varepsilon}{1-\varepsilon s}-T\Big)^{-1}(1-\varepsilon\mathcal{I}^Rs)^{-1} \\
&=\mathcal{I}^R\frac{1-\varepsilon^2}{(1-\varepsilon s)^2}\Big(\mathcal{I}^R\frac{s-\varepsilon}{1-\varepsilon s}-T\Big)^{-1}-\mathcal{I}^R\frac{\varepsilon}{1-\varepsilon s},\qquad s\in\mathbb{R}^{n+1}\setminus\overline{S_\omega}.
\end{align*}
Let us now fix $\varphi\in(\omega,\pi)$ and verify the uniform bound \eqref{Eq_Teps_resolvent}. For every $s\in\mathbb{R}^{n+1}\setminus(S_\varphi\cup\{0\})$ there is also $\frac{s-\varepsilon}{1-\varepsilon s}\in\mathbb{R}^{n+1}\setminus(S_\varphi\cup\{0\})$ by \eqref{Eq_Teps_2}, and we can use the sectorial estimate \eqref{Eq_Resolvent_estimate} of the operator $T$, to get
\begin{equation*}
\Vert(\mathcal{I}^Rs-T_\varepsilon)^{-1}\Vert\leq\frac{1-\varepsilon^2}{|1-\varepsilon s|^2}\frac{C_\varphi}{|\frac{s-\varepsilon}{1-\varepsilon s}|}+\frac{\varepsilon}{|1-\varepsilon s|}\leq\frac{c_\varphi^2C_\varphi}{|s|}+\frac{c_\varphi}{|s|},\qquad s\in\mathbb{R}^{n+1}\setminus(S_\varphi\cup\{0\}),
\end{equation*}
using the constant
\begin{equation}\label{Eq_Teps_3}
c_\varphi:=\sup\limits_{p\in\mathbb{R}^{n+1}\setminus(S_\varphi\cup\{0\})}\frac{1}{|1-p|}=\begin{cases} \frac{1}{\sin(\varphi)}, & 0<\varphi\leq\frac{\pi}{2}, \\ 1, & \frac{\pi}{2}\leq\varphi<\pi. \end{cases} \qedhere
\end{equation}
\end{proof}

With these three lemmas we are now in the position to prove the main result of this section, the fact that every $m$-accretive operator admits a bounded $H^\infty$-functional calculus.

\begin{thm}\label{thm_Accretive_boundedFC}
Let $T\in\mathcal{K}(V)$ be injective and $m$-accretive. Then $T$ is sectorial of angle $\frac{\pi}{2}$, and for every $f\in\mathcal{N}^\infty(S_\theta)$, $\theta\in(\frac{\pi}{2},\pi)$, there is $f(T)\in\mathcal{B}(V)$, with
\begin{equation}\label{Eq_Accretive_boundedFC}
\Vert f(T)\Vert\leq\Vert f\Vert_{\infty,S_{\frac{\pi}{2}}}.
\end{equation}
With $\Vert\cdot\Vert_{\infty,S_{\frac{\pi}{2}}}$ we denote the supremum norm over the sector $S_{\frac{\pi}{2}}$.
\end{thm}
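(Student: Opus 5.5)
The plan is to approximate $T$ by the bounded, strictly coercive operators $T_\varepsilon$ from Lemma~\ref{lem_Teps}, prove the contractive bound for each of them, and then pass to the limit $\varepsilon\to0$.

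Sectoriality of angle $\frac{\pi}{2}$ is the preceding lemma. For $\varepsilon\in(0,1]$ we use $T_\varepsilon=(\varepsilon+T)(1+\varepsilon T)^{-1}$ from \eqref{Eq_Teps}. By Lemma~\ref{lem_Teps} it is bounded and sectorial of angle $\frac{\pi}{2}$, with resolvent bounds uniform in $\varepsilon$.

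\textbf{Step 1: coercivity of $T_\varepsilon$.} Write $w=(1+\varepsilon T)^{-1}v$, so $v=w+\varepsilon Tw$ and $T_\varepsilon v=\varepsilon w+Tw$. Using accretivity, Definition~\ref{defi_Accretive}~a), we get
\begin{equation*}
\Sc\langle T_\varepsilon v,v\rangle=\varepsilon\Vert w\Vert^2+(1+\varepsilon^2)\Sc\langle Tw,w\rangle+\varepsilon\Vert Tw\Vert^2\geq\varepsilon\big(\Vert w\Vert^2+\Vert Tw\Vert^2\big).
\end{equation*}
Since $\Vert v\Vert^2\leq2(\Vert w\Vert^2+\varepsilon^2\Vert Tw\Vert^2)$, this yields $\Sc\langle T_\varepsilon v,v\rangle\geq\frac{\varepsilon}{2}\Vert v\Vert^2$. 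Hence Lemma~\ref{lem_Coercive} applies to $T_\varepsilon$ and gives the representation \eqref{Eq_FC_coercive}, i.e.\ $f(T_\varepsilon)v=\frac{1}{2\pi}\int_\mathbb{R}A_\varepsilon(t)vf(tJ)\,dt$ with $A_\varepsilon(t)$ self-adjoint and nonnegative.

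\textbf{Step 2: the contractive estimate for $T_\varepsilon$.} The operator-valued measure $\frac{1}{2\pi}A_\varepsilon(t)\,dt$ is nonnegative. Taking $f\equiv1$, its total mass is $1(T_\varepsilon)=\mathrm{id}$; this follows by a regularizer limit as in the proof of Lemma~\ref{lem_Coercive}. For $u,v\in V$, Cauchy--Schwarz applied to the nonnegative real forms $\Sc\langle A_\varepsilon(t)\cdot,\cdot\rangle$ then gives
\begin{equation*}
|\Sc\langle f(T_\varepsilon)v,u\rangle|\leq\Vert f\Vert_{\infty,S_{\frac{\pi}{2}}}\Big(\tfrac{1}{2\pi}\int_\mathbb{R}\Sc\langle A_\varepsilon v,v\rangle\,dt\Big)^{\frac{1}{2}}\Big(\tfrac{1}{2\pi}\int_\mathbb{R}\Sc\langle A_\varepsilon u,u\rangle\,dt\Big)^{\frac{1}{2}}=\Vert f\Vert_\infty\Vert v\Vert\Vert u\Vert.
\end{equation*}
One subtlety is that $f(tJ)$ is a Clifford number multiplied from the right. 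I will handle this by splitting $f(tJ)=\alpha(t)+J\beta(t)$ with real $\alpha,\beta$, or alternatively by viewing $A_\varepsilon(t)$ as a positive operator on the complex Hilbert space $(V,\mathcal{I}^RJ)$; there $\mathcal{I}^R f(tJ)$ is multiplication by the complex number $f(tJ)$ and commutes with $A_\varepsilon(t)$, since $A_\varepsilon(t)$ is built from resolvents with $\mathcal{I}^RJ$. This yields $\Vert f(T_\varepsilon)\Vert\leq\Vert f\Vert_{\infty,S_{\frac{\pi}{2}}}$.

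\textbf{Step 3: the limit $\varepsilon\to0$ (the main obstacle).} For $g\in\mathcal{N}^0(S_\theta)$, the resolvents $(\mathcal{I}^Rs-T_\varepsilon)^{-1}$ converge strongly to $(\mathcal{I}^Rs-T)^{-1}$, as seen from the explicit formula in the proof of Lemma~\ref{lem_Teps}. The uniform bound \eqref{Eq_Teps_resolvent} then lets dominated convergence give $g(T_\varepsilon)v\to g(T)v$. To reach $f\in\mathcal{N}^\infty$, take the regularizers $r_n$ from \eqref{Eq_Coercive_10}. Then $r_nf\in\mathcal{N}^0$, and Step 2 combined with this limit gives
\begin{equation*}
\Vert(r_nf)(T)\Vert\leq\Vert r_nf\Vert_{\infty,S_{\frac{\pi}{2}}}\leq\Vert f\Vert_{\infty,S_{\frac{\pi}{2}}},
\end{equation*}
because $|r_n|\leq1$ on $S_{\frac{\pi}{2}}$. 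Finally, $(r_nf)(T)=f(T)r_n(T)$, and $r_n(T)v\to v$ since $T$ is injective and sectorial: the range is dense, reflexivity applies, and the $r_n(T)$ are uniformly bounded. Together with closedness of $f(T)$, this shows $\dom f(T)=V$ and gives the bound \eqref{Eq_Accretive_boundedFC}. The delicate points are the density and convergence of $r_n(T)$ for unbounded $T$, and the Clifford-valued Cauchy--Schwarz step in Step 2.
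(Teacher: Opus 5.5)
Your proposal is correct and follows the paper's proof essentially step by step. The shared steps are: coercivity of $T_\varepsilon$ and the representation from Lemma~\ref{lem_Coercive}; the self-adjoint nonnegative kernel $A_\varepsilon(t)$, normalized by $f\equiv 1$ and combined with Cauchy--Schwarz; dominated convergence as $\varepsilon\to0$ for $\mathcal{N}^0$-functions; and regularization by $r_n$. The one deviation is the last step, where the paper uses the convergence lemma \cite[Lemma~4.1]{CMS25} and you argue directly via $(r_nf)(T)=f(T)r_n(T)$, $r_n(T)\to 1$ strongly, and closedness of $f(T)$. There you should add one line: $f(T)r_n(T)v$ is only known to be bounded, not convergent. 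Either take a weakly convergent subsequence, using that the graph of $f(T)$ is weakly closed, or first prove the bound on the dense set $\dom f(T)\supseteq\ran e(T)$ and then use closedness.
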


\begin{proof}
In the \textit{first step} we will prove the estimate \eqref{Eq_Accretive_boundedFC} for the bounded operator $T_\varepsilon$ in \eqref{Eq_Teps}. With $w:=(1+\varepsilon T)^{-1}v$, we can use the property a) of Definition~\ref{defi_Accretive}, to show its coercivity
\begin{align*}
\Sc\langle(T_\varepsilon-\varepsilon)v,v\rangle&=\Sc\big\langle(\varepsilon+T)w-\varepsilon(1+\varepsilon T)w,v\big\rangle=(1-\varepsilon^2)\Sc\langle Tw,v\big\rangle \\
&=(1-\varepsilon^2)\big(\underbrace{\Sc\langle Tw,w\rangle}_{\geq 0}+\underbrace{\Sc\langle Tw,\varepsilon Tw\rangle}_{=\varepsilon\Vert Tw\Vert^2\geq 0}\big)\geq 0,\qquad v\in V.
\end{align*}
This shows that $T_\varepsilon$ satisfies the assumptions of Lemma~\ref{lem_Coercive}, and hence there is $f(T_\varepsilon)\in\mathcal{B}(V)$, and we can write
\begin{equation}\label{Eq_Accretive_boundedFC_1}
f(T_\varepsilon)v=\frac{1}{2\pi}\int_\mathbb{R}\underbrace{\big((t\mathcal{I}^RJ+T_\varepsilon^*)^{-1}-(t\mathcal{I}^RJ-T_\varepsilon)^{-1}\big)}_{=:A_\varepsilon(t)}vf(tJ)dt,
\end{equation}
see also the representation \eqref{Eq_Coercive_6}. As it is already shown in the inequality \eqref{Eq_Coercive_8} and the text above, the operator $A_\varepsilon(t)$ is self-adjoint and non-negative. Hence, there exists a self-adjoint, non-negative, bounded, and $\mathbb{R}$-linear operator $A_\varepsilon(t)^{\frac{1}{2}}$, such that
\begin{equation*}
A_\varepsilon(t)=A_\varepsilon(t)^{\frac{1}{2}}A_\varepsilon(t)^{\frac{1}{2}}.
\end{equation*}
Using this square root operator in the integral \eqref{Eq_Accretive_boundedFC_1}, we can estimate for every $v,w\in V$
\begin{align}
\Sc\langle f(T_\varepsilon)v,w\rangle&=\frac{1}{2\pi}\int_\mathbb{R}\Sc\big\langle A_\varepsilon(t)^{\frac{1}{2}}vf(tJ),A_\varepsilon(t)^{\frac{1}{2}}w\big\rangle dt \notag \\
&\leq\Vert f\Vert_{\infty,S_\frac{\pi}{2}}\frac{1}{2\pi}\int_\mathbb{R}\Vert A_\varepsilon(t)^{\frac{1}{2}}v\Vert\Vert A_\varepsilon(t)^{\frac{1}{2}}w\Vert dt \notag \\
&\leq\Vert f\Vert_{\infty,S_{\frac{\pi}{2}}}\frac{1}{2\pi}\bigg(\int_\mathbb{R}\Vert A_\varepsilon(t)^{\frac{1}{2}}v\Vert^2dt\bigg)^{\frac{1}{2}}\bigg(\int_\mathbb{R}\Vert A_\varepsilon(t)^{\frac{1}{2}}w\Vert^2dt\bigg)^{\frac{1}{2}} \notag \\
&=\Vert f\Vert_{\infty,S_{\frac{\pi}{2}}}\frac{1}{2\pi}\bigg(\int_\mathbb{R}\Sc\langle A_\varepsilon(t)v,v\rangle dt\bigg)^{\frac{1}{2}}\bigg(\int_\mathbb{R}\Sc\langle A_\varepsilon(t)w,w\rangle dt\bigg)^{\frac{1}{2}}. \label{Eq_Accretive_boundedFC_2}
\end{align}
Moreover, plugging the constant function $f(s)=1$ into the integral \eqref{Eq_Accretive_boundedFC_1}, gives the identity
\begin{equation*}
u=\frac{1}{2\pi}\int_\mathbb{R}A_\varepsilon(t)udt,\qquad u\in V.
\end{equation*}
This then reduces the right hand side of \eqref{Eq_Accretive_boundedFC_2} to
\begin{equation*}
\Sc\langle f(T_\varepsilon)v,w\rangle\leq\Vert f\Vert_{\infty,S_{\frac{\pi}{2}}}\Vert v\Vert\Vert w\Vert,\qquad v,w\in V.
\end{equation*}
Since this is true for every $v,w\in V$, we conclude
\begin{equation}\label{Eq_Accretive_boundedFC_3}
\Vert f(T_\varepsilon)\Vert\leq\Vert f\Vert_{\infty,S_{\frac{\pi}{2}}}\qquad f\in\mathcal{N}^\infty(S_\theta),\,\theta\in(\tfrac{\pi}{2},\pi).
\end{equation}
In the \textit{second step} we will prove \eqref{Eq_Accretive_boundedFC} for the operator $T$, but only for functions $f\in\mathcal{N}^0(S_\theta)$. By Lemma~\ref{lem_Teps}, the resolvents of $T_\varepsilon$ are for fixed $\varphi\in(\frac{\pi}{2},\theta)$, uniformly bounded by
\begin{equation*}
\Vert(\mathcal{I}^Rs-T_\varepsilon)^{-1}\Vert\leq\frac{C_\varphi}{|s|},\qquad s\in\mathbb{R}^{n+1}\setminus(S_\varphi\cup\{0\}).
\end{equation*}
Hence, by the dominated convergence theorem, we can carry the limit inside the integral
\begin{align*}
\lim\limits_{\varepsilon\rightarrow 0^+}f(T_\varepsilon)v&=\frac{1}{2\pi}\lim\limits_{\varepsilon\rightarrow 0^+}\int_{\partial S_\varphi\cap\mathbb{C}_J}(\mathcal{I}^Rs-T_\varepsilon)^{-1}vf(s)ds_J \\
&=\frac{1}{2\pi}\int_{\partial S_\varphi\cap\mathbb{C}_J}(\mathcal{I}^Rs-T)^{-1}vf(s)ds_J=f(T)v,\qquad v\in V.
\end{align*}
Together with the estimate \eqref{Eq_Accretive_boundedFC_3}, this limit shows the bound
\begin{equation}\label{Eq_Accretive_boundedFC_4}
\Vert f(T)v\Vert=\lim\limits_{\varepsilon\rightarrow 0^+}\Vert f(T_\varepsilon)v\Vert\leq\lim\limits_{\varepsilon\rightarrow 0^+}\Vert f(T_\varepsilon)\Vert\Vert v\Vert\leq\Vert f\Vert_{\infty,S_{\frac{\pi}{2}}}\Vert v\Vert,\qquad v\in V.
\end{equation}
In the \textit{third step}, we will generalize the result to $f\in\mathcal{N}^\infty(S_\theta)$. Let $r_n(s)$ be the regularizing sequence from \eqref{Eq_Coercive_10}. With the constant \eqref{Eq_Teps_3}, these functions are uniformly bounded by
\begin{equation*}
|r_n(s)|=\frac{n|s|}{|s+n||s+\frac{1}{n}|}=\frac{1}{|1+\frac{s}{n}||1+\frac{1}{sn}|}\leq c_{\pi-\theta}^2,\qquad s\in S_\theta.
\end{equation*}
Together with \eqref{Eq_Accretive_boundedFC_4}, we then obtain the estimates
\begin{equation}\label{Eq_Accretive_boundedFC_5}
\sup\limits_{n\in\mathbb{N}}\Vert r_nf\Vert_{\infty,S_\theta}\leq c_{\pi-\theta}^2\Vert f\Vert_{\infty,S_\theta},\qquad\text{and}\qquad\sup\limits_{n\in\mathbb{N}}\Vert(r_nf)(T)\Vert\leq c_{\pi-\theta}^2\Vert f\Vert_{\infty,S_\theta}.
\end{equation}
Hence, by \cite[Lemma~4.1]{CMS25}, there is $f(T)\in\mathcal{B}(V)$, and there converges
\begin{equation*}
\lim\limits_{n\rightarrow\infty}(r_nf)(T)v=f(T)v,\qquad v\in V.
\end{equation*}
Consequently, with \eqref{Eq_Accretive_boundedFC_4} and the fact that $c_{\frac{\pi}{2}}=1$ in \eqref{Eq_Accretive_boundedFC_5}, we can estimate
\begin{equation*}
\Vert f(T)v\Vert=\lim\limits_{n\rightarrow\infty}\Vert(r_nf)(T)v\Vert\leq\lim\limits_{n\rightarrow\infty}\Vert r_nf\Vert_{\infty,S_{\frac{\pi}{2}}}\Vert v\Vert\leq\Vert f\Vert_{\infty,S_{\frac{\pi}{2}}}\Vert v\Vert,\qquad v\in V.
\end{equation*}
Since this is true for every $v\in V$, we have proven \eqref{Eq_Accretive_boundedFC} for every $f\in\mathcal{N}^\infty(S_\theta)$, $\theta\in(\frac{\pi}{2},\pi)$.
\end{proof}

In our application of Theorem~\ref{thm_Accretive_boundedFC} to the gradient operator $\nabla_a$ in Theorem~\ref{thm_Bounded_FC_gradient}, the gradient itself is not $m$-accretive, only its square $\nabla_a^2$ is. Hence we need the following corollary.

\begin{cor}\label{cor_T2_accretive}
Let $T\in\mathcal{K}(V)$ be injective, and bisectorial of angle $\omega\in(0,\frac{\pi}{2})$, such that $T^2$ is $m$-accretive. Then $T$ has a bounded $H^\infty$-functional calculus.
\end{cor}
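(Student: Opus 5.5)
The plan is to combine Theorem~\ref{thm_Accretive_boundedFC} for the squared operator $T^2$ with the equivalence in Theorem~\ref{thm_Bounded_FC_T2}. First, $T^2$ must satisfy the hypotheses of Theorem~\ref{thm_Accretive_boundedFC}.
\begin{itemize}
\item[$\circ$] \textbf{Closedness.} By \cite[Proposition~3.26]{CMS26}, $T^2$ is sectorial of angle $2\omega$. In particular $T^2\in\mathcal{K}(V)$, since sectorial operators are closed by definition.
\item[$\circ$] \textbf{Injectivity.} This is immediate: if $T^2v=0$, then $Tv\in\ker T=\{0\}$ and hence $v=0$.
\item[$\circ$] \textbf{Accretivity.} $T^2$ is $m$-accretive by assumption.
\end{itemize}
So Theorem~\ref{thm_Accretive_boundedFC} gives, for every $\theta'\in(\frac{\pi}{2},\pi)$ and every $f\in\mathcal{N}^\infty(S_{\theta'})$, that $f(T^2)\in\mathcal{B}(V)$ with $\Vert f(T^2)\Vert\leq\Vert f\Vert_{\infty,S_{\theta'}}$.

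Next, I check that this is a bounded $H^\infty$-functional calculus for $T^2$ in the sense of Definition~\ref{defi_Bounded_FC}. There is a subtlety here: the definition is formulated relative to a sectoriality angle, and it demands the estimate for every $\theta$ above that angle.
\begin{itemize}
\item[$\circ$] \textbf{Angle $\frac{\pi}{2}$.} Theorem~\ref{thm_Accretive_boundedFC} also shows that $T^2$ is sectorial of angle $\frac{\pi}{2}$. Relative to that angle, the estimate with constant $C_\theta=1$ for all $\theta\in(\frac{\pi}{2},\pi)$ is exactly Definition~\ref{defi_Bounded_FC}.
\item[$\circ$] \textbf{Angle $2\omega<\frac{\pi}{2}$.} If $2\omega<\frac{\pi}{2}$, the definition also asks for the bound for $\theta\in(2\omega,\frac{\pi}{2}]$. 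Such an $f$ is defined on the smaller sector $S_\theta$, and the accretive theorem does not apply directly.
\end{itemize}

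The way around the second case is to use Theorem~\ref{thm_Quadratic_estimates}. By that theorem, boundedness of the calculus is equivalent to a quadratic estimate for a single function $g$ in some $\mathcal{N}^0(S_\theta)$, and this condition does not see the angle. The argument runs as follows.
\begin{itemize}
\item[$\circ$] Apply Theorem~\ref{thm_Quadratic_estimates} to $T^2$ viewed as $\frac{\pi}{2}$-sectorial. This produces a quadratic estimate with some $g\in\mathcal{N}^0(S_{\theta'})$, $\theta'>\frac{\pi}{2}$.
\item[$\circ$] Restrict $g$ to $S_\theta$ for any $\theta>2\omega$. The restriction is still in $\mathcal{N}^0$.
\item[$\circ$] The operators $g(tT^2)$ do not depend on which admissible contour is used in the $\omega$-functional calculus. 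So the same quadratic estimate holds.
\item[$\circ$] Apply Theorem~\ref{thm_Quadratic_estimates} again, now with the angle $2\omega$. This gives a bounded $H^\infty$-functional calculus of $T^2$ relative to the angle $2\omega$.
\end{itemize}

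Finally, Theorem~\ref{thm_Bounded_FC_T2} applies, since $T$ is injective and bisectorial of angle $\omega$ and $T^2$ has a bounded $H^\infty$-functional calculus. It yields that $T$ has a bounded $H^\infty$-functional calculus. The only delicate point is the angle bookkeeping between $2\omega$ and $\frac{\pi}{2}$, which the quadratic estimate route handles.
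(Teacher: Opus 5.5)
Your proposal is correct and follows essentially the same route as the paper: apply Theorem~\ref{thm_Accretive_boundedFC} to $T^2$, use the angle-independence of the quadratic estimates in Theorem~\ref{thm_Quadratic_estimates} to pass from the angle $\frac{\pi}{2}$ down to $2\omega$, and conclude with Theorem~\ref{thm_Bounded_FC_T2}. Your explicit checks that $T^2$ is closed and injective fill in details the paper leaves implicit, and the restriction of $g$ is not needed: when $2\omega<\frac{\pi}{2}$, any $\theta'\in(\frac{\pi}{2},\pi)$ already lies in $(2\omega,\pi)$.
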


\begin{proof}
Since $T^2$ is $m$-accretive, it is $\frac{\pi}{2}$-sectorial by Theorem~\ref{thm_Accretive_boundedFC}, and satisfies the estimate
\begin{equation*}
\Vert f(T^2)\Vert\leq\Vert f\Vert_{\infty,\frac{\pi}{2}},\qquad f\in\mathcal{N}^\infty(S_{2\theta}),\,\theta\in(\tfrac{\pi}{4},\tfrac{\pi}{2}).
\end{equation*}
In particular, $T^2$ is sectorial of any angle $\omega'\in(\frac{\pi}{2},\pi)$ and admits a bounded $H^\infty$-functional calculus considered as an $\omega'$-sectorial operator. However, since $T^2$ is even sectorial of angle $2\omega$ by \cite[Proposition~3.26]{CMS26}, which may be smaller then $\frac{\pi}{2}$, it follows from Theorem~\ref{thm_Quadratic_estimates} that $T^2$ also admits a bounded $H^\infty$-functional calculus with respect to this angle $2\omega$. Finally, Theorem~\ref{thm_Bounded_FC_T2} tells us that in this case also $T$ has a bounded $H^\infty$-functional calculus, considered as a bisectorial operator of angle $\omega$.
\end{proof}

\section{Spectral projectors}\label{sec_Projectors}

Let us note that for any $\theta\in(0,\frac{\pi}{2})$, the double sector $D_\theta$ in \eqref{Eq_Domega} can be decomposed into two disconnected sectors $S_\theta$ and $-S_\theta$ from \eqref{Eq_Somega}. Moreover, for every function $f:D_\theta\rightarrow\mathbb{R}^{n+1}$, we define the restrictions to the respective sectors \medskip

\begin{minipage}{0.29\textwidth}
\begin{center}
\begin{tikzpicture}
\fill[black!20] (0,0)--(1.81,0.85) arc (25:-25:2)--(0,0)--(-1.81,-0.85) arc (205:155:2);
\draw (-1.81,0.85)--(1.81,-0.85);
\draw (-1.81,-0.85)--(1.81,0.85);
\draw[->] (-2.2,0)--(2.3,0);
\draw[->] (0,-0.8)--(0,0.8);
\draw (1,0) arc (0:25:1) (0.8,-0.07) node[anchor=south] {\small{$\theta$}};
\draw (-1.4,-0.05) node[anchor=south] {$-S_\theta$};
\draw (1.4,-0.05) node[anchor=south] {$S_\theta$};
\end{tikzpicture}
\end{center}
\end{minipage}
\begin{minipage}{0.7\textwidth}
\begin{equation}\label{Eq_fpm}
f_+(s):=\begin{cases} f(s), & s\in S_\theta, \\ 0, & s\in-S_\theta, \end{cases}\quad f_-(s):=\begin{cases} 0, & s\in S_\theta, \\ f(s), & s\in-S_\theta. \end{cases}
\end{equation}
\end{minipage}

\medskip If $f$ is intrinsic, then also $f_\pm$ are intrinsic. Moreover, if $f\in\mathcal{N}^0(D_\theta)$, then also $f_\pm\in\mathcal{N}^0(D_\theta)$, and for every bisectorial operator $T\in\mathcal{K}(V)$ of angle $\omega\in(0,\frac{\pi}{2})$, the $\omega$-functional calculus is compatible with this decomposition in the sense
\begin{equation*}
f(T)=f_+(T)+f_-(T).
\end{equation*}
If moreover $\sigma_S(T)\subseteq\pm\overline{S_\omega}$, then
\begin{equation*}
f(T)v=f_\pm(T)v=\frac{1}{2\pi}\int_{\partial(\pm S_\varphi)\cap\mathbb{C}_J}(\mathcal{I}^Rs-T)^{-1}vf(s)ds_J.
\end{equation*}
In other words, the bisectorial functional calculus reduces in this case to the sectorial functional calculus, where the integration path only surrounds one of the two sectors. \medskip

Now we discuss if it is possible to decompose a bisectorial operator $T$ into a sum of two operators, corresponding to the decomposition of the $S$-spectrum into the two parts of the double sector. To do so, let us consider the characteristic functions
\begin{equation*}
\mathds{1}_+(s):=\begin{cases} 1 & \Sc(s)>0, \\ 0 & \Sc(s)<0, \end{cases}\qquad\text{and}\qquad\mathds{1}_-(s):=\begin{cases} 0, & \Sc(s)>0, \\ 1, & \Sc(s)<0. \end{cases}
\end{equation*}
These functions are constant and hence intrinsic on both half spaces $\mathbb{R}^{n+1}_+$ and $\mathbb{R}^{n+1}_-$. However, they do not decay at $0$ and at $\infty$, which means we cannot use the $\omega$-functional calculus to define the projectors $\mathds{1}_\pm(T)$. Moreover, since these functions admit a jump at $s=0$ we can also not use something like the extended $\omega$-functional calculus \cite[Definition~4.3]{MS24}. Consequently, we need the full $H^\infty$-functional calculus from Definition~\ref{defi_omega_Hinfty}~ii'), which causes the problem that $\mathds{1}_\pm(T)$ will become unbounded in general, and the upcoming theory will no longer be valid. Hence we need to assume that the operator $T$ admits a bounded $H^\infty$-functional calculus, in order to ensure that $\mathds{1}_\pm(T)$ are bounded.

\begin{defi}\label{defi_P}
Let $T\in\mathcal{K}(V)$ be injective, bisectorial of angle $\omega\in(0,\frac{\pi}{2})$, and have a bounded $H^\infty$-functional calculus. Then we define the \textit{spectral projectors}
\begin{equation*}
P_+:=\mathds{1}_+(T),\qquad\text{and}\qquad P_-:=\mathds{1}_-(T).
\end{equation*}
Both operators are defined via the $H^\infty$-functional calculus from Definition~\ref{defi_omega_Hinfty}~ii'), and by assumption they are bounded, everywhere defined operators $P_+,P_-\in\mathcal{B}(V)$.
\end{defi}

\begin{lem}\label{lem_P_properties}
Let $T$ be injective, bisectorial of angle $\omega\in(0,\frac{\pi}{2})$, and have a bounded $H^\infty$-functional calculus. Then there holds
\begin{enumerate}
\item[a)] $P_+^2 = P_+$,\quad and\quad $P_-^2=P_-$,
\item[b)] $P_+P_-=P_-P_+=0$,
\item[c)] $P_++P_-=1$.
\end{enumerate}
\end{lem}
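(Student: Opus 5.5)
The plan is to reduce all three identities to algebraic identities of the characteristic functions, $\mathds{1}_\pm^2=\mathds{1}_\pm$, $\mathds{1}_+\mathds{1}_-=0$ and $\mathds{1}_++\mathds{1}_-=1$ on $D_\theta$. These will be transported to the operator level by the product and sum rules of the $H^\infty$-functional calculus from \cite[Theorem~5.7]{MS24}. In general, those rules give only inclusions such as $f(T)g(T)\subseteq(fg)(T)$ and $f(T)+g(T)\subseteq(f+g)(T)$. Equality holds whenever $f(T)$ (respectively $g(T)$) is bounded, or when $\dom(fg)(T)\subseteq\dom g(T)$. Here every operator that appears is bounded and everywhere defined by Definition~\ref{defi_Bounded_FC}: namely $P_\pm$, and also $0(T)=0$ and $1(T)=1$, the latter two being trivially bounded. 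So all inclusions become equalities of operators in $\mathcal{B}(V)$.

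First I would prove c). The sum rule gives $P_++P_-\subseteq(\mathds{1}_++\mathds{1}_-)(T)=1(T)$. It then remains to check that the constant function $1$ is mapped to the identity. With the regulariser $e(s)=\frac{s}{1+s^2}$ we have $1(T)=e(T)^{-1}e(T)=1$, because $e(T)=T(1+T^2)^{-1}$ is injective. Since the left-hand side $P_++P_-$ is everywhere defined, the inclusion is an equality.

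For a) and b) I would use the product rule with the bounded factor: $P_+P_+\subseteq(\mathds{1}_+^2)(T)=P_+$, and equality holds since both sides are everywhere defined. The same argument gives $P_-^2=P_-$. Similarly $P_+P_-\subseteq(\mathds{1}_+\mathds{1}_-)(T)=0(T)=0$, and $0(T)=e(T)^{-1}0=0$, so $P_+P_-=0$; exchanging the roles of $+$ and $-$ gives $P_-P_+=0$. Alternatively, once c) and a) are known, b) follows purely algebraically from $P_+P_-=P_+(1-P_+)=P_+-P_+^2=0$.

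The main obstacle is making sure the product rule is applicable in the present setting. The functions $\mathds{1}_\pm$ are not in $\mathcal{N}^0$, so a direct resolvent-identity computation with the $\omega$-calculus is unavailable. If the cited product rule were only stated as an inclusion in the wrong direction, I would instead argue with regularisers. Let $e$ be the regulariser above, so $e\mathds{1}_\pm\in\mathcal{N}^0(D_\theta)$. The product rule of the $\omega$-functional calculus, which is an honest identity of bounded operators, gives $(e\mathds{1}_+)(T)(e\mathds{1}_+)(T)=(e^2\mathds{1}_+)(T)$. Writing $(e\mathds{1}_+)(T)=e(T)P_+=P_+e(T)$, using that $P_+$ commutes with $e(T)$ because $e(T)$ is bounded, yields $e(T)^2P_+^2=e(T)^2P_+$. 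Injectivity of $e(T)$ then gives a); b) and c) follow by the same cancellation argument.
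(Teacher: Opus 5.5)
Your main argument is the paper's own proof. You use the product rule \cite[Theorem~5.7]{MS24} and the sum rule/linearity \cite[Lemma~5.5]{MS24} to get the inclusions $P_+^2\subseteq P_+$, $P_+P_-\subseteq 0(T)=0$ and $P_++P_-\subseteq 1(T)=1$, and these become equalities because the left-hand sides are everywhere defined; your check of $1(T)=1$ and $0(T)=0$ via a regulariser is a harmless extra detail the paper takes for granted. One small inaccuracy, which does not affect the argument since every factor here is bounded: in the product rule $f(T)g(T)\subseteq(fg)(T)$, it is boundedness of the right factor $g(T)$, not of $f(T)$, that forces equality (e.g.\ $(1+T^2)^{-1}(1+T^2)$ is only defined on $\dom T^2$).
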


\begin{proof}
a), b)\;\;By the product rule of the $H^\infty$-functional calculus \cite[Theorem~5.7]{MS24}, there is
\begin{equation*}
P_+^2=\mathds{1}_+(T)\mathds{1}_+(T)\subseteq(\mathds{1}_+\mathds{1}_+)(T)=\mathds{1}_+(T)=P_+.
\end{equation*}
However, since $P_+\in\mathcal{B}(V)$ is everywhere defined, this operator inclusion holds with equality. Analogous computations also show that $P_-^2=P_-$, $P_+P_-=0$, and $P_-P_+=0$. \medskip

c)\;\;From the linearity of the $H^\infty$-functional calculus \cite[Lemma~5.5~(ii)]{MS24}, we know that
\begin{equation*}
P_++P_-=\mathds{1}_+(T)+\mathds{1}_-(T)\subseteq(\mathds{1}_++\mathds{1}_-)(T)=1(T)=1.
\end{equation*}
Since $P_+,P_-\in\mathcal{B}(V)$ are everywhere defined, this operator inclusion holds with equality.
\end{proof}

\begin{defi}\label{defi_V}
Let $T$ be injective, bisectorial of angle $\omega\in(0,\frac{\pi}{2})$, and have a bounded $H^\infty$-functional calculus. Then we define the spaces
\begin{equation*}
V_+:=\ran P_+,\qquad\text{and}\qquad V_-:=\ran P_-.
\end{equation*}
\end{defi}

\begin{lem}\label{lem_V_decomposition}
Let $T$ be injective, bisectorial of angle $\omega\in(0,\frac{\pi}{2})$ and have a bounded $H^\infty$-functional calculus. Then $V_+$ and $V_-$ are closed, right linear subspaces of $V$, and the whole space admits the direct sum decomposition
\begin{equation}\label{Eq_V_decomposition}
V=V_+\overset{\bullet}{+}V_-.
\end{equation}
\end{lem}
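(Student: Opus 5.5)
The plan is to deduce everything from the algebraic identities in Lemma~\ref{lem_P_properties} together with the fact that $P_\pm\in\mathcal{B}(V)$ are right linear. This right linearity holds because they come from the $H^\infty$-functional calculus, whose operators are right linear by Remark~\ref{rem_Gantner_equivalence}.

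\emph{Step 1: right linearity.} Since $P_\pm$ is right linear, for $v=P_+w\in V_+$ and $s\in\mathbb{R}_n$ we get $vs=(P_+w)s=P_+(ws)\in V_+$. Together with additivity, this shows that $V_+$ is a right linear subspace. The same argument works for $V_-$.

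\emph{Step 2: closedness.} I identify $V_+$ with a kernel. By Lemma~\ref{lem_P_properties}~a) and c),
\begin{equation*}
V_+=\ran P_+=\ker(1-P_+)=\ker P_-.
\end{equation*}
Indeed, if $v=P_+w$, then $(1-P_+)v=P_+w-P_+^2w=0$. Conversely, if $(1-P_+)v=0$, then $v=P_+v\in\ran P_+$. Finally, $1-P_+=P_-$ by c). Since $P_-$ is bounded, its kernel is closed, so $V_+$ is closed. Symmetrically, $V_-=\ker P_+$ is closed.

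\emph{Step 3: the direct sum \eqref{Eq_V_decomposition}.} For every $v\in V$, Lemma~\ref{lem_P_properties}~c) gives $v=P_+v+P_-v\in V_++V_-$. For the trivial intersection, take $v\in V_+\cap V_-$. By Step~2, $v\in\ker P_-\cap\ker P_+$, hence $v=P_+v+P_-v=0$. (Alternatively, use b): $v=P_+v=P_+P_-v=0$.)

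Nothing here is a real obstacle. The only point needing care is that right linearity of $P_\pm$ must be justified through the right linearity of the $H^\infty$-functional calculus. That is where Remark~\ref{rem_Gantner_equivalence} (or \cite{MS24}) is used, since the resolvent $(\mathcal{I}^Rs-T)^{-1}$ itself is only $\mathbb{R}$-linear.
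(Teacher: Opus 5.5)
Your proposal is correct and follows the paper's proof essentially step by step. Right linearity comes from the right linearity of $P_\pm$. Closedness comes from boundedness plus idempotence, which you make explicit via $V_\pm=\ker P_\mp$. The direct sum comes from Lemma~\ref{lem_P_properties}~b) and c).
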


\begin{proof}
The spaces $V_+$ and $V_-$ are right linear, as they are the ranges of the right linear operators $P_+$ and $P_-$. The closedness of $V_\pm$ is an immediate consequence of the boundedness of $P_\pm$ and the property $P_\pm^2=P_\pm$ from Lemma~\ref{lem_P_properties}~a). To prove the decomposition \eqref{Eq_V_decomposition}, let $v\in V$. By Lemma~\ref{lem_P_properties}~c), we can write
\begin{equation*}
v=P_+v+P_-v,
\end{equation*}
which shows that $v\in V_++V_-$. In order to show that $V_+\cap V_-=\{0\}$, let $v\in V_+\cap V_-$. Since $v\in V_+=\ran P_+$, there is $P_-v=0$ due to Lemma~\ref{lem_P_properties}~b). Analogously, there is $P_+v=0$ and hence $v=P_+v+P_-v=0$.
\end{proof}

Now that we have defined the spectral projectors $P_\pm$ and the corresponding subspaces $V_\pm$, we want to restrict operators to these subspaces. However, this can only be done if the subspaces are invariant under the action of the operator. This property is formulated in three equivalent ways in the following lemma.

\begin{lem}[Invariant subspaces]\label{lem_Invariant_subspace}
Let $T\in\mathcal{K}(V)$ be injective, bisectorial of angle $\omega\in(0,\frac{\pi}{2})$ and have a bounded $H^\infty$-functional calculus. Then for any $S\in\mathcal{K}(V)$ the following statements are equivalent
\begin{enumerate}
\item[i)] $\ran(S\rest{V_\pm})\subseteq V_\pm$,\quad and\quad $\ran(P_\pm\rest{\dom S})\subseteq\dom S$, \medskip
\item[ii)] $SP_+\rest{\dom S}\,=P_+S$, \medskip
\item[iii)] $SP_-\rest{\dom S}\,=P_-S$.
\end{enumerate}
\end{lem}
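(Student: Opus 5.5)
This is pure linear algebra with the idempotents $P_\pm$. We only use Lemma~\ref{lem_P_properties}, namely $P_\pm^2=P_\pm$, $P_+P_-=P_-P_+=0$ and $P_++P_-=1$, together with Lemma~\ref{lem_V_decomposition}. Closedness of $S$ is not needed for the equivalences. Throughout we read the operator identities in ii) and iii) as equalities of operators including domains: $\dom(SP_+\rest{\dom S})=\{v\in\dom S\mid P_+v\in\dom S\}$ and $\dom(P_+S)=\dom S$.

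\emph{ii) $\Leftrightarrow$ iii).} Since $P_-=1-P_+$, on $\dom S$ we have $P_-S=S-P_+S$. Also $SP_-\rest{\dom S}$ is defined at $v\in\dom S$ exactly when $P_-v=v-P_+v\in\dom S$, i.e.\ exactly when $P_+v\in\dom S$. So both sides of ii) and of iii) live on the same domain. On that domain
\begin{equation*}
SP_-v=Sv-SP_+v.
\end{equation*}
Hence ii) and iii) are equivalent.

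\emph{i) $\Rightarrow$ ii).} Let $v\in\dom S$. By the second part of i), $P_+v\in\dom S$ and $P_-v\in\dom S$, so $SP_+\rest{\dom S}$ is defined on all of $\dom S$. By the first part of i), $SP_+v\in V_+$ and $SP_-v\in V_-$. Applying $P_+$ to $Sv=SP_+v+SP_-v$ and using $P_+\rest{V_+}=1$ and $P_+\rest{V_-}=0$ (both from Lemma~\ref{lem_P_properties}) gives $P_+Sv=SP_+v$, which is ii).

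\emph{ii) $\Rightarrow$ i).} Equality of domains in ii) gives $P_+v\in\dom S$ for every $v\in\dom S$. Then also $P_-v=v-P_+v\in\dom S$, which is the second part of i). For $v\in V_+\cap\dom S$ we have $v=P_+v$, so
\begin{equation*}
Sv=SP_+v=P_+Sv\in V_+.
\end{equation*}
For $v\in V_-\cap\dom S$ we use the already established iii): $Sv=SP_-v=P_-Sv\in V_-$. This is the first part of i).

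The only delicate point is the domain bookkeeping: the restriction notation must be read so that the operator equality forces $P_\pm(\dom S)\subseteq\dom S$. Everything else is immediate.
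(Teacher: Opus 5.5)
Your proof is correct and follows essentially the paper's argument: you decompose $v=P_+v+P_-v$, apply $P_+$ using $P_+^2=P_+$ and $P_+P_-=0$, and read ii) as an operator identity including equality of domains. The only cosmetic difference is in the $V_-$ part of ii)$\Rightarrow$i): you first prove ii)$\Leftrightarrow$iii) directly and then use iii), whereas the paper argues via $P_+Su=SP_+u=0$ for $u\in V_-\cap\dom S$.
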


\begin{proof}
We will only prove the equivalence i) $\Leftrightarrow$ ii), while i) $\Leftrightarrow$ iii) then follows analogously. For the implication i) $\Rightarrow$ ii), let us first prove the inclusion $\subseteq$. Let $v\in\dom S$ with $P_+v\in\dom S$. If we decompose $v=P_+v+P_-v$, possible by Lemma~\ref{lem_P_properties}~c), we also get $P_-v\in\dom S$. Hence, we can write
\begin{equation}\label{Eq_Invariant_subspace_1}
SP_+v=Sv-SP_-v.
\end{equation}
By assumption we know that $SP_+v\in V_+$ and $SP_-v\in V_-$. I.e., if we apply the operator $P_+$ onto the equation \eqref{Eq_Invariant_subspace_1}, we get with Lemma~\ref{lem_P_properties}~a) \& b) the desired
\begin{equation*}
SP_+v=P_+Sv.
\end{equation*}
For the inverse inclusion $\supseteq$ let $v\in\dom S$. Then by i) there is also $P_+v\in\dom S$, and consequently $v\in\dom(SP_+\rest{\dom S})$. \medskip

For the implication ii) $\Rightarrow$ i), let first $v\in\ran(S\rest{V_+})$, i.e., $v=Su$ for some $u\in V_+\cap\dom S$. Consequently, there is $P_+u=u\in\dom S$, and we get from our assumption ii), that
\begin{equation*}
v=Su=SP_+u=P_+Su\in\ran P_+=V_+.
\end{equation*}
For the second inclusion let $v\in\ran(S\rest{V_-})$, i.e., $v=Su$ for some $u\in V_-\cap\dom S$. Since there is also $P_+u=0\in\dom S$, we get from ii), that $0=SP_+u=P_+Su$. This means
\begin{equation*}
v=Su=P_-Su\in\ran P_-=V_-.
\end{equation*}
Next, let $v\in\dom S$. This means, $v$ is in the domain of the right hand side of ii). Hence, $v$ is also in the domain of the left hand side of ii), which means that $P_+v\in\dom S$. Moreover, since we can write $P_-v=v-P_+v$ by Lemma~\ref{lem_P_properties}~c), there is also $P_-v\in\dom S$.
\end{proof}

\begin{defi}\label{defi_Spm}
Let $T$ be injective, bisectorial of angle $\omega\in(0,\frac{\pi}{2})$ and have a bounded $H^\infty$-functional calculus. Furthermore, let $S\in\mathcal{K}(V)$ satisfy the equivalent conditions i)--iii) of Lemma~\ref{lem_Invariant_subspace}. In this case, we define the \text{restricted operators}
\begin{equation*}
S_\pm:=S\rest{V_\pm}:V_\pm\rightarrow V_\pm,\qquad\text{with}\quad\dom S_\pm:=V_\pm\cap\dom S.
\end{equation*}
\end{defi}

\begin{prop}\label{prop_dom_ran_Spm}
Let $T$ be injective, bisectorial of angle $\omega\in(0,\frac{\pi}{2})$ and have a bounded $H^\infty$-functional calculus. Furthermore, let $S\in\mathcal{K}(V)$ satisfy the equivalent conditions i)--iii) of Lemma~\ref{lem_Invariant_subspace}. Then, there is
\begin{enumerate}
\item[a)] $\ran S_\pm=\ran S\cap V_\pm$, \medskip
\item[b)] $\dom S_\pm=\ran(P_\pm\rest{\dom S})$.
\end{enumerate}
\end{prop}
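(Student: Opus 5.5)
Both statements follow from double inclusions that use only Lemma~\ref{lem_P_properties}, Definition~\ref{defi_Spm} and the equivalent conditions of Lemma~\ref{lem_Invariant_subspace}. Closedness of $S$ is not needed. We treat the ``$+$'' case; the ``$-$'' case is identical once $P_+$ is replaced by $P_-$ and the commutation relation ii) by iii).

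For a), let $v\in\ran S_+$, so $v=Su$ for some $u\in V_+\cap\dom S$. Clearly $v\in\ran S$. By condition i) of Lemma~\ref{lem_Invariant_subspace}, $\ran(S\rest{V_+})\subseteq V_+$, hence $v\in V_+$. Conversely, let $v\in\ran S\cap V_+$ and write $v=Su$ with $u\in\dom S$. The candidate preimage in $V_+$ is $P_+u$. By condition i), $P_+u\in\dom S$, and $P_+u\in\ran P_+=V_+$, so $P_+u\in\dom S_+$. The commutation relation ii) together with $v\in V_+$ and Lemma~\ref{lem_P_properties}~a) then give
\begin{equation*}
S_+P_+u=SP_+u=P_+Su=P_+v=v,
\end{equation*}
so $v\in\ran S_+$.

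For b), let $v\in\dom S_+=V_+\cap\dom S$. Then $v=P_+v$ by Lemma~\ref{lem_P_properties}~a), and $v\in\dom S$, so $v\in\ran(P_+\rest{\dom S})$. Conversely, let $v=P_+u$ with $u\in\dom S$. Then $v\in\ran P_+=V_+$, and condition i) gives $v=P_+u\in\dom S$. Hence $v\in V_+\cap\dom S$.

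The only point needing care is the reverse inclusion in a), where the preimage $u$ must be replaced by $P_+u$. This step relies on both halves of condition i), namely invariance of $\dom S$ under $P_+$ and the commutation relation ii), which we take as given from Lemma~\ref{lem_Invariant_subspace}. Everything else is a direct unwinding of definitions.
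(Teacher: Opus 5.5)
Your proof is correct and follows the paper's argument essentially step by step. In a) you replace the preimage $u$ by $P_+u$ and use $v=P_+v=P_+Su=SP_+u$, and in b) you simply unwind the definitions; the only quibble is wording, since the commutation relation is condition ii) rather than a ``half'' of condition i), which is harmless because the conditions are equivalent.
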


\begin{proof}
In both points, we will only show the \grqq$+$\grqq-part, the \grqq$-$\grqq-part follows analogously. \medskip

a)\;\;The inclusion $\subseteq$ follows immediately from Lemma~\ref{lem_Invariant_subspace}~i). For the inclusion $\supseteq$, let $v\in\ran S\cap V_+$. Hence, there is $v=Su$ for some $u\in\dom S$. By Lemma~\ref{lem_Invariant_subspace}~i), we know that also $P_+u\in\dom S$. From the commutation in Lemma~\ref{lem_Invariant_subspace}~ii) and from $v\in V_+$, it then follows that
\begin{equation*}
v=P_+v=P_+Su=SP_+u\in\ran(S\rest{V_+})=\ran(S_+).
\end{equation*}
b)\;\;For the inclusion $\supseteq$, let $v\in\dom S$. Then there is $P_+v\in V_+$, but also $P_+v\in\dom S$ by the operator inclusion in Lemma~\ref{lem_Invariant_subspace}~ii). Hence there is $P_+v\in\dom(S\rest{V_+})=\dom S_+$. \medskip

For the inclusion $\subseteq$, let $v\in\dom S_+$, i.e., $v\in\dom(S)$ and $v\in V_+$. Then there clearly is $v=P_+v\in\ran(P_+\rest{\dom S})$.
\end{proof}

Until now, operators $S\in\mathcal{K}(V)$ for which $S_\pm$ are defined, are given abstractly by the conditions in Lemma~\ref{lem_Invariant_subspace}. The next corollary shows that every operator $f(T)$, which comes from the $H^\infty$-functional calculus of the underlying operator $T$, is one of them.

\begin{cor}\label{cor_fTpm}
Let $T$ be injective, bisectorial of angle $\omega\in(0,\frac{\pi}{2})$ and have a bounded $H^\infty$-functional calculus. Then, for every $f\in\mathcal{N}^\poly(D_\theta)$, $\theta\in(\omega,\frac{\pi}{2})$, the operator $f(T)$ satisfies
\begin{equation}\label{Eq_fTpm}
f(T)P_\pm\rest{\dom f(T)}=P_\pm f(T).
\end{equation}
In particular, $f(T)$ satisfies the conditions i)--iii) of Lemma~\ref{lem_Invariant_subspace}. Moreover, there holds
\begin{enumerate}
\item[a)] $\ran f(T)_\pm=\ran f(T)\cap V_\pm$, \medskip
\item[b)] $\dom f(T)_\pm=\ran(P_\pm\rest{\dom f(T)})$.
\end{enumerate}
\end{cor}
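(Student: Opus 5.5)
The plan is to derive the commutation relation \eqref{Eq_fTpm} from the product rule of the $H^\infty$-functional calculus \cite[Theorem~5.7]{MS24}. That rule gives, for $f,g\in\mathcal{N}^\poly(D_\theta)$, the inclusion $f(T)g(T)\subseteq(fg)(T)$, with $\dom(f(T)g(T))=\dom(fg)(T)\cap\dom g(T)$. In the complex case one also has equality of domains when $g(T)$ is bounded. Once the commutation relation is established, statements a) and b) follow directly from Proposition~\ref{prop_dom_ran_Spm} applied to $S=f(T)$. For this we also need $f(T)\in\mathcal{K}(V)$, which holds because the $H^\infty$-functional calculus produces closed operators: $f(T)=e(T)^{-1}(ef)(T)$ is the product of a closed operator with a bounded one.

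To prove \eqref{Eq_fTpm}, take $g=\mathds{1}_+$, which is bounded and intrinsic on $D_\theta$, so that $g(T)=P_+\in\mathcal{B}(V)$. Applying the product rule in both orders gives
\begin{equation*}
P_+f(T)\subseteq(\mathds{1}_+f)(T)\qquad\text{and}\qquad f(T)P_+\subseteq(f\mathds{1}_+)(T).
\end{equation*}
Since $\mathds{1}_+f=f\mathds{1}_+=f_+$, both operators are restrictions of $f_+(T)$. The left-hand side $P_+f(T)$ has domain exactly $\dom f(T)$, because $P_+$ is everywhere defined.

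The main obstacle is to show that $\dom f(T)\subseteq\dom(f(T)P_+)$, i.e.\ that $P_+v\in\dom f(T)$ for every $v\in\dom f(T)$. I would use the version of the product rule in which the bounded operator stands on the right: for bounded $g(T)$ one has $f(T)g(T)$ with domain $\{v:g(T)v\in\dom f(T)\}$, and the inclusion $f(T)g(T)\subseteq(fg)(T)$ becomes an equality on $\dom f(T)$ after verifying that $\dom f(T)\subseteq\dom (fg)(T)$. The direct route is via a regularizer. Let $e$ be a regularizer for $f$, and note that $e$ also regularizes $f_+$, so $e(T)$ is injective. Since $P_+$ commutes with $e(T)$ and with $(ef)(T)$, both of which are bounded (product rule, with everything bounded and everywhere defined), we can argue as follows for $v\in\dom f(T)$. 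By definition $(ef)(T)v\in\ran e(T)$, say $(ef)(T)v=e(T)w$ with $w=f(T)v$. Then
\begin{equation*}
(ef)(T)P_+v=P_+(ef)(T)v=P_+e(T)w=e(T)P_+w\in\ran e(T).
\end{equation*}
Hence $P_+v\in\dom f(T)$ and $f(T)P_+v=P_+w=P_+f(T)v$. This proves $f(T)P_+\rest{\dom f(T)}=P_+f(T)$, which is condition ii) of Lemma~\ref{lem_Invariant_subspace}. The case of $P_-$ is identical, or it can be read off from iii) via the equivalence in Lemma~\ref{lem_Invariant_subspace}.

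Finally, Lemma~\ref{lem_Invariant_subspace} yields conditions i)--iii) for $S=f(T)$, so the restrictions $f(T)_\pm$ are well defined by Definition~\ref{defi_Spm}, and Proposition~\ref{prop_dom_ran_Spm} gives a) and b) verbatim.
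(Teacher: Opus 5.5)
Your proof is correct. The difference from the paper lies in how you handle the domain inclusion $\dom f(T)\subseteq\dom(f(T)P_+)$, which you call the main obstacle.

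\textbf{The paper's route.} The paper gets this inclusion directly from the domain formula of the product rule, which you yourself quoted, applied in both orders:
\begin{align*}
\dom(P_+f(T))&=\dom f_+(T)\cap\dom f(T),\\
\dom(f(T)P_+)&=\dom f_+(T)\cap\dom P_+=\dom f_+(T).
\end{align*}
Since $P_+$ is everywhere defined, $\dom(P_+f(T))=\dom f(T)$. The first identity then gives $\dom f(T)\subseteq\dom f_+(T)=\dom(f(T)P_+)$. Both operators are restrictions of $f_+(T)$, so \eqref{Eq_fTpm} follows. The product-rule route you sketch first therefore already closes in one line; nothing extra needs verifying.

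\textbf{Your route.} Your regularizer argument proves $P_+f(T)\subseteq f(T)P_+$ directly from $f(T)=e(T)^{-1}(ef)(T)$. It uses the commutation of $P_+$ with the bounded operators $e(T)$ and $(ef)(T)$. This is the classical fact that a bounded operator commuting with $e(T)$ and $(ef)(T)$ commutes with $f(T)$.

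\textbf{Comparison.} Your version needs only the inclusion part of the product rule, and only between everywhere-defined operators, where an inclusion automatically becomes an equality. It is therefore more elementary and establishes values and domains in a single step. The paper's version is shorter, but it relies on the full domain identity of the product rule for unbounded $f(T)$.

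\textbf{A point in your favour.} You explicitly check $f(T)\in\mathcal{K}(V)$: a closed operator $e(T)^{-1}$ composed with a bounded one. This is needed to apply Lemma~\ref{lem_Invariant_subspace} and Proposition~\ref{prop_dom_ran_Spm}, and the paper leaves it implicit.

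\textbf{Minor remark.} Your observation that $e$ also regularizes $f_+$ plays no role. Injectivity of $e(T)$ is already part of $e$ being a regularizer for $f$.
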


\begin{proof}
We only have to prove \eqref{Eq_fTpm}. The equivalent conditions i)--iii) of Lemma~\ref{lem_Invariant_subspace} are then satisfied clearly, and the properties a) and b) follow from Proposition~\ref{prop_dom_ran_Spm}. Also, we will only consider the \grqq$+$\grqq-part in \eqref{Eq_fTpm}, the \grqq$-$\grqq-part then follows analogously. \medskip

From the product rule of the $H^\infty$-functional calculus \cite[Theorem~5.7]{MS24}, we know that
\begin{equation*}
P_+f(T)\subseteq(\mathds{1}_+f)(T)\qquad\text{and}\qquad f(T)P_+\subseteq(f\mathds{1}_+)(T),
\end{equation*}
have the same value on their respective domains, and their domains are given by
\begin{align*}
\dom(P_+f(T))&=\dom(\mathds{1}_+f)(T)\cap\dom f(T), \\
\dom(f(T)P_+)&=\dom(f\mathds{1}_+)(T)\cap\dom\mathds{1}_+(T)=\dom(f\mathds{1}_+)(T).
\end{align*}
Combining these two domain identities, gives
\begin{equation*}
\dom(P_+f(T))=\dom(f(T)P_+)\cap\dom f(T),
\end{equation*}
ans consequently, we get $f(T)P_+\rest{\dom f(T)}=P_+f(T)$.
\end{proof}

Choosing the special function $f(s)=s$ in Corollary~\ref{cor_fTpm}, we know that $T$ satisfies the equivalent conditions i)--iii) of Lemma~\ref{lem_Invariant_subspace}. Hence, the restricted operators $T_+$ and $T_-$ are defined in the sense of Definition~\ref{defi_Spm}. The next theorem now closer investigates them in terms of their $S$-spectra.

\begin{thm}\label{thm_Spectrum_restriction}
Let $T$ be injective, bisectorial of angle $\omega\in(0,\frac{\pi}{2})$ and have a bounded $H^\infty$-functional calculus. Then, using the half spaces $\mathbb{R}^{n+1}_\pm$ from \eqref{Eq_Accretive_5}, there is
\begin{equation}\label{Eq_Spectrum_restriction}
\sigma_S(T_\pm)\setminus\{0\}=\sigma_S(T)\cap\mathbb{R}^{n+1}_\pm.
\end{equation}
For $s=0$ on the other hand, there is
\begin{equation}\label{Eq_Zero_in_the_spectrum}
0\in\sigma_S(T)\quad\Leftrightarrow\quad 0\in\sigma_S(T_+)\cup\sigma_S(T_-).
\end{equation}
\end{thm}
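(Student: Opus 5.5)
We prove \eqref{Eq_Spectrum_restriction} by relating the resolvent of $T$ to the resolvents of $T_\pm$ on the invariant decomposition $V=V_+\overset{\bullet}{+}V_-$ from Lemma~\ref{lem_V_decomposition}. By Corollary~\ref{cor_fTpm} with $f(s)=s$, $T$ commutes with $P_\pm$ in the sense of Lemma~\ref{lem_Invariant_subspace}. Since $P_\pm$ are right linear, they also commute with $\mathcal{I}^Rs$. Hence for every $s\in\mathbb{R}^{n+1}$ the operator $\mathcal{I}^Rs-T$ decomposes as the direct sum $(\mathcal{I}^Rs-T_+)\oplus(\mathcal{I}^Rs-T_-)$, with $\dom T=\dom T_+\overset{\bullet}{+}\dom T_-$ by Proposition~\ref{prop_dom_ran_Spm}~b). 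We use the characterization $\rho_S(\cdot)=\{s\,|\,\mathcal{I}^Rs-\cdot\text{ bijective}\}$ from \cite[Theorem~2.4]{ADJOINT}, applied on the closed Hilbert submodules $V_\pm$. Then $\mathcal{I}^Rs-T$ is bijective if and only if both $\mathcal{I}^Rs-T_\pm$ are bijective. This gives $\sigma_S(T)=\sigma_S(T_+)\cup\sigma_S(T_-)$. Closedness of $T_\pm$ is inherited from $T$, since $V_\pm$ are closed.

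\textbf{Step 1: Equation \eqref{Eq_Zero_in_the_spectrum}.} This follows directly from the union identity above at $s=0$.

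\textbf{Step 2: the spectrum of $T_+$ avoids the open left half space, and symmetrically for $T_-$.} For \eqref{Eq_Spectrum_restriction} it then suffices to show $\sigma_S(T_+)\cap\mathbb{R}^{n+1}_-=\emptyset$ and $\sigma_S(T_-)\cap\mathbb{R}^{n+1}_+=\emptyset$. Points on the imaginary hyperplane $s_0=0$, $s\neq0$, need no work, since they lie in $\rho_S(T)$ by bisectoriality. Consider $s\in\mathbb{R}^{n+1}_-$, and assume first $s\in\rho_S(T)$. Then $\mathcal{I}^Rs-T$ is bijective, so each restriction is bijective. The hard case is $s\in\sigma_S(T)\cap\mathbb{R}^{n+1}_-$, i.e.\ $s\in-\overline{S_\omega}\setminus\{0\}$. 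There we must show that $\mathcal{I}^Rs-T_+$ is invertible.

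Our plan for this case is to construct the inverse via the $H^\infty$-functional calculus. Fix $s\in-\overline{S_\omega}\setminus\{0\}$ and choose $\theta\in(\omega,\frac{\pi}{2})$ with $s\in-S_\theta$. Define on $D_\theta$ the intrinsic function
\[
g(p):=\mathds{1}_+(p)\,Q_s(p)^{-1}(\overline{s}-p),\qquad Q_s(p)=p^2-2s_0p+|s|^2.
\]
Its zeros in $p$ are $s$ and $\overline{s}$ together with their axial sphere, which lies in $-S_\theta$, where $g$ vanishes. Hence $g$ is bounded and holomorphic on $S_\theta$, and $g\in\mathcal{N}^\infty(D_\theta)$. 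It decays like $|p|^{-1}$ at infinity and tends to $\mathds{1}_+\,\overline{s}/|s|^2$ at $0$.

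Boundedness of the calculus gives $g(T)\in\mathcal{B}(V)$, and $g(T)$ commutes with $P_+$. Using the product rule \cite[Theorem~5.7]{MS24} together with the relation between $(\mathcal{I}^Rs-T)^{-1}$ and $Q_s[T]^{-1}$ in Remark~\ref{rem_Gantner_equivalence}, we would verify that $g(T)\rest{V_+}$ inverts $\mathcal{I}^Rs-T_+$. Concretely, we would check that $Q_s[T]\,g(T)\supseteq(\mathds{1}_+(\overline{s}-\cdot))(T)$ on $V_+$. We then reassemble the $\mathbb{R}$-linear resolvent from $Q_s[T_+]^{-1}=(\mathds{1}_+Q_s^{-1})(T)\rest{V_+}$ by the formula $v\mapsto Q_s[T_+]^{-1}v\overline{s}-T_+Q_s[T_+]^{-1}v$. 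It is therefore cleanest to define $R:=(\mathds{1}_+Q_s^{-1})(T)$, which lies in $\mathcal{B}(V)$, and to prove $Q_s[T_+]R\rest{V_+}=1_{V_+}$ and $RQ_s[T_+]=1$ on $\dom T_+^2$ via the product rule and $P_+^2=P_+$. The second identity uses that $\mathds{1}_+Q_s^{-1}\cdot Q_s=\mathds{1}_+$ and that $\mathds{1}_+(T)$ acts as the identity on $V_+$. The first requires domain bookkeeping, namely that $\ran R\subseteq\dom T^2$, which holds because $p^2\,\mathds{1}_+Q_s^{-1}$ is again bounded.

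This yields $s\in\rho_S(T_+)$, and symmetrically for $T_-$. The main obstacle is the careful domain tracking in the product rule, i.e.\ showing the inclusions are equalities using that the relevant operators are everywhere defined and bounded. A secondary point is checking that the $S$-resolvent set of $T_+$ taken in the submodule $V_+$ is characterized by bounded invertibility of $Q_s[T_+]$ there.
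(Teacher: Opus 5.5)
Your proposal is correct and is essentially the paper's argument. The key step, inverting $Q_s[T_+]$ by $(\mathds{1}_+Q_s^{-1})(T)\rest{V_+}$ via the product rule and $Q_s[T]=Q_s(T)$, is exactly the paper's $h_s$. Your direct-sum argument for $\rho_S(T)=\rho_S(T_+)\cap\rho_S(T_-)$ just packages the paper's remaining two inclusions, which it phrases through $Q_s[T]$ rather than $\mathcal{I}^Rs-T$.

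Drop the detour through $g(p)=\mathds{1}_+(p)Q_s(p)^{-1}(\overline{s}-p)$. Since $\overline{s}$ is a non-real paravector, $g$ is not intrinsic, so $g(T)$ is not defined by the calculus. Your final route via $R$ does not need it.
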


\begin{proof}
In \eqref{Eq_Spectrum_restriction} we will only verify the equality for $T_+$. First of all, let us note that there is
\begin{equation}\label{Eq_Spectrum_restriction_4}
\dom T_+^2=V_+\cap\dom T^2\qquad\text{and}\qquad Q_s[T_+]v=Q_s[T]v,\quad v\in\dom T_+^2.
\end{equation}
The inclusion $\subseteq$ in \eqref{Eq_Spectrum_restriction} will be done in two parts. In the \textit{first part}, we will show that
\begin{equation}\label{Eq_Spectrum_restriction_1}
\sigma_S(T_+)\subseteq\overline{S_\omega},
\end{equation}
i.e., for $s\in\mathbb{R}^{n+1}\setminus\overline{S_\omega}$ we have to verify that the operator $Q_s[T_+]:\dom T_+^2\rightarrow V_+$ is bijective. With the function $Q_s(q):=q^2-2s_0q+|s|^2$, let us define
\begin{equation*}
h_s(q):=\begin{cases} Q_s(q)^{-1}, & q\in S_\omega, \\ 0, & q\in-S_\omega. \end{cases}.
\end{equation*}
Then $h_s\in\mathcal{N}^\infty(D_\omega)$, and $h_s(T)$ is a bounded operator, defined via the $H^\infty$-functional calculus. Since the operator polynomial $Q_s[T]=Q_s(T)$ coincides with the $H^\infty$-functional calculus by \cite[Theorem~5.9]{MS24}, we can use the product rule \cite[Theorem~5.7]{MS24}, to get
\begin{align*}
h_s(T)Q_s[T]&\subseteq(h_sQ_s)(T)=\mathds{1}_+(T)=P_+, \\
Q_s[T]h_s(T)&=(Q_sh_s)(T)=\mathds{1}_+(T)=P_+.
\end{align*}
These two identities, together with \eqref{Eq_Spectrum_restriction_4}, then turn into
\begin{align*}
h_s(T)Q_s[T_+]v&=h_s(T)Q_s[T]v=P_+v=v,\qquad v\in\dom T_+^2, \\
Q_s[T_+]h_s(T)v&=Q_s[T]h_s(T)v=P_+v=v,\qquad v\in V_+,
\end{align*}
where in the first equation of the second line we used that $h_s(T)v\in V_+$ by Corollary~\ref{cor_fTpm}~a). These identities now show that $Q_s[T_+]$ is bijective, i.e., $s\in\rho_S(T_+)$, and we have verified \eqref{Eq_Spectrum_restriction_1}. \medskip

In the \textit{second part} of the inclusion $\subseteq$ of \eqref{Eq_Spectrum_restriction}, we will show
\begin{equation}\label{Eq_Spectrum_restriction_5}
\sigma_S(T_+)\subseteq\sigma_S(T).
\end{equation}
To do so, let $s\in\rho_S(T)$, i.e., $Q_s[T]$ is bijective. Then we obtain the two identities
\begin{align*}
Q_s[T]^{-1}Q_s[T_+]v&=Q_s[T]^{-1}Q_s[T]v=v,\qquad v\in\dom T_+^2, \\
Q_s[T_+]Q_s[T]^{-1}v&=Q_s[T]Q_s[T]^{-1}v=v,\qquad v\in V_+,
\end{align*}
where in the first equation of the second line we used that $Q_s[T]^{-1}v\in V_+$ by \eqref{Eq_Spectrum_restriction_4}. These two identities now show that $Q_s[T_+]$ is bijective, i.e., $s\in\rho_S(T^+)$, and we have verified \eqref{Eq_Spectrum_restriction_5}. \medskip

Combining now the identities \eqref{Eq_Spectrum_restriction_1} and \eqref{Eq_Spectrum_restriction_5}, proves the first inclusion in \eqref{Eq_Spectrum_restriction}, namely
\begin{equation*}
\sigma_S(T_+)\setminus\{0\}\subseteq\sigma_S(T)\cap\overline{S_\omega}\setminus\{0\}\subseteq\sigma_S(T)\cap\mathbb{R}^{n+1}_+.
\end{equation*}
For the inclusion $\supseteq$ in \eqref{Eq_Spectrum_restriction}, let $s\in\rho_S(T_+)\setminus(-\overline{S_\omega})$. From the inclusion \eqref{Eq_Spectrum_restriction_1}, for $T_-$ instead of $T_+$, we also know that $s\in\rho_S(T_-)$. This means, $Q_s[T_+]$ and $Q_s[T_-]$ are both bijective. Since by \eqref{Eq_Spectrum_restriction_4}, the commutation \eqref{Eq_fTpm} writes as
\begin{equation*}
P_\pm Q_s[T]\subseteq Q_s[T]P_\pm=Q_s[T_\pm]P_\pm,
\end{equation*}
it follows that
\begin{subequations}
\begin{align}
\big(Q_s[T_+]^{-1}P_++Q_s[T_-]^{-1}P_-\big)Q_s[T]v&=P_+v+P_-v=v,\qquad v\in\dom T^2, \label{Eq_Spectrum_restriction_3} \\
Q_s[T]\big(Q_s[T_+]^{-1}P_++Q_s[T_-]^{-1}P_-\big)v&=P_+v+P_-v=v,\qquad v\in V, \label{Eq_Spectrum_restriction_6}
\end{align}
\end{subequations}
These two identities now show that $Q_s[T]$ is bijective, i.e., $s\in\rho_S(T)$. Hence we have proven
\begin{equation}\label{Eq_Spectrum_restriction_2}
\rho_S(T_+)\setminus(-\overline{S_\omega})\subseteq\rho_S(T).
\end{equation}
Since we already know by \eqref{Eq_Spectrum_restriction_1} that there is $\sigma_S(T_+)\subseteq\overline{S_\omega}$, we conclude from \eqref{Eq_Spectrum_restriction_2} that
\begin{equation*}
\sigma_S(T)\cap\mathbb{R}^{n+1}_+\subseteq\sigma_S(T_+)\cap\mathbb{R}^{n+1}_+=\sigma_S(T_+)\setminus\{0\}.
\end{equation*}
Now that we have proven \eqref{Eq_Spectrum_restriction}, we turn our attention to the equivalence \eqref{Eq_Zero_in_the_spectrum}.  The implication $\Leftarrow$ follows already from \eqref{Eq_Spectrum_restriction_5} and the according inclusion for $T_-$. For the implication $\Rightarrow$ on the other hand assume that $0\in\rho_S(T_+)\cap\rho_S(T_-)$, i.e., $Q_0[T_+]$ and $Q_0[T_-]$ are invertible. Then the same two identities as in \eqref{Eq_Spectrum_restriction_3} and \eqref{Eq_Spectrum_restriction_6} prove that $Q_0[T]$ is bijective, which is a contradiction to $0\in\sigma_S(T)$.
\end{proof}

\begin{cor}
Let $T$ be injective, bisectorial of angle $\omega\in(0,\frac{\pi}{2})$ and have a bounded $H^\infty$-functional calculus. Then
\begin{equation}\label{Eq_Resolvent_set_Restriction}
\rho_S(T)=\rho_S(T_+)\cap\rho_S(T_-),
\end{equation}
and for every $s\in\rho_S(T)$, the resolvent operator decomposes into
\begin{equation}\label{Eq_Resolvent_decomposition}
(\mathcal{I}^Rs-T)^{-1}=(\mathcal{I}^Rs-T_+)^{-1}P_++(\mathcal{I}^Rs-T_-)^{-1}P_-.
\end{equation}
\end{cor}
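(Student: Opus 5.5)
The set identity \eqref{Eq_Resolvent_set_Restriction} should follow almost directly from Theorem~\ref{thm_Spectrum_restriction} and its proof. The resolvent decomposition \eqref{Eq_Resolvent_decomposition} will then be obtained by the same left/right inverse argument used there, but now with $\mathcal{I}^Rs-T$ in place of $Q_s[T]$.

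For \eqref{Eq_Resolvent_set_Restriction}, the inclusion $\subseteq$ is exactly \eqref{Eq_Spectrum_restriction_5}, together with the analogous statement for $T_-$. For $\supseteq$, take $s\in\rho_S(T_+)\cap\rho_S(T_-)$. If $s\neq0$, then at least one of $s\notin-\overline{S_\omega}$ or $s\notin\overline{S_\omega}$ holds. In the first case \eqref{Eq_Spectrum_restriction_2} gives $s\in\rho_S(T)$, and in the second case its $T_-$ analogue does. Alternatively, one can directly reuse the identities \eqref{Eq_Spectrum_restriction_3}, \eqref{Eq_Spectrum_restriction_6}. These only require that $Q_s[T_\pm]$ be bijective, so they show that $Q_s[T]$ is bijective for every $s$ in the intersection, including $s=0$. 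This is the cleanest route, and it makes \eqref{Eq_Zero_in_the_spectrum} a special case. Boundedness of $Q_s[T]^{-1}=Q_s[T_+]^{-1}P_++Q_s[T_-]^{-1}P_-$ follows from the boundedness of $P_\pm$ and of the restricted inverses.

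For \eqref{Eq_Resolvent_decomposition}, fix $s\in\rho_S(T)$, so that $s\in\rho_S(T_\pm)$ by the first part. The characterization \cite[Theorem~2.4]{ADJOINT} applied in the closed right Hilbert submodules $V_\pm$ shows that $\mathcal{I}^Rs-T_\pm:\dom T_\pm\to V_\pm$ is bijective. This uses that $V_\pm$ is closed under right multiplication, so that $\mathcal{I}^Rs$ maps $V_\pm$ into itself. The key commutation is
\begin{equation*}
P_\pm(\mathcal{I}^Rs-T)\subseteq(\mathcal{I}^Rs-T)P_\pm=(\mathcal{I}^Rs-T_\pm)P_\pm.
\end{equation*}
Here $P_\pm T\subseteq TP_\pm$ comes from Corollary~\ref{cor_fTpm} with $f(s)=s$, and $P_\pm\mathcal{I}^Rs=\mathcal{I}^RsP_\pm$ holds by right linearity of $P_\pm$. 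The identity on the right holds because $P_\pm$ maps $\dom T$ into $\dom T_\pm$, by Corollary~\ref{cor_fTpm}~b).

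Set $R:=(\mathcal{I}^Rs-T_+)^{-1}P_++(\mathcal{I}^Rs-T_-)^{-1}P_-$. For $v\in\dom T$, the commutation gives
\begin{equation*}
R(\mathcal{I}^Rs-T)v=(\mathcal{I}^Rs-T_+)^{-1}(\mathcal{I}^Rs-T_+)P_+v+(\mathcal{I}^Rs-T_-)^{-1}(\mathcal{I}^Rs-T_-)P_-v=P_+v+P_-v=v.
\end{equation*}
For $v\in V$, the element $(\mathcal{I}^Rs-T_\pm)^{-1}P_\pm v$ lies in $\dom T_\pm\subseteq\dom T$, and $\mathcal{I}^Rs-T$ acts on it as $\mathcal{I}^Rs-T_\pm$. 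Hence $(\mathcal{I}^Rs-T)Rv=P_+v+P_-v=v$. Since $\mathcal{I}^Rs-T$ is bijective, $R$ must equal its inverse.

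The only delicate point is justifying that $\mathcal{I}^Rs-T_\pm$ is invertible on $V_\pm$. This requires applying the equivalence between $Q_s$-invertibility and bijectivity of $\mathcal{I}^Rs-T$ inside $V_\pm$. That in turn needs $V_\pm$, with the restricted inner product, to be a right Hilbert module and $T_\pm$ to be closed. Both hold because $V_\pm$ is closed and right linear by Lemma~\ref{lem_V_decomposition}, and $T_\pm$ is closed as the restriction of a closed operator to a closed subspace.
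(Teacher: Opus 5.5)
Your proof is correct and follows essentially the paper's route. The set identity comes from Theorem~\ref{thm_Spectrum_restriction}; you reuse its intermediate steps \eqref{Eq_Spectrum_restriction_5}, \eqref{Eq_Spectrum_restriction_2} and the identities \eqref{Eq_Spectrum_restriction_3}, \eqref{Eq_Spectrum_restriction_6} for all $s$ at once, while the paper combines \eqref{Eq_Spectrum_restriction}, \eqref{Eq_Zero_in_the_spectrum} and bisectoriality on the imaginary axis. The decomposition rests on $\mathcal{I}^Rs-T$ acting as $\mathcal{I}^Rs-T_\pm$ on $\dom T_\pm$, and your bijectivity argument for $\mathcal{I}^Rs-T_\pm$ on $V_\pm$ and explicit left/right inverse check fill in what the paper calls immediate.
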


\begin{proof}
From the two identities in \eqref{Eq_Spectrum_restriction}, it in particular follows that
\begin{equation*}
\big(\rho_S(T_+)\cap\rho_S(T_-)\big)\cup\{0\}=\rho_S(T)\cup\big\{s\in\mathbb{R}^{n+1}\;\big|\;s_0=0\big\}=\rho_S(T)\cup\{0\},
\end{equation*}
where in the second equation we used that $\{s\in\mathbb{R}^{n+1}\;|\;s_0=0\}\setminus\{0\}\subseteq\rho_S(T)$, because $T$ is bisectorial. Using also the equivalence
\begin{equation*}
0\in\rho_S(T)\quad\Leftrightarrow\quad 0\in\rho_S(T_+)\cap\rho_S(T_-),
\end{equation*}
from \eqref{Eq_Zero_in_the_spectrum}, we conclude the equality \eqref{Eq_Resolvent_set_Restriction}. Moreover, the decomposition \eqref{Eq_Resolvent_decomposition} follows immediately from
\begin{equation*}
(\mathcal{I}^Rs-T)v=(\mathcal{I}^Rs-T_\pm)v,\qquad v\in\dom T_\pm. \qedhere
\end{equation*}
\end{proof}

\begin{thm}
Let $T$ be injective, bisectorial of angle $\omega\in(0,\frac{\pi}{2})$ and have a bounded $H^\infty$-functional calculus. Then, for every $f\in\mathcal{N}^\poly(D_\theta)$, $\theta\in(\omega,\frac{\pi}{2})$, there is
\begin{equation}\label{Eq_SFC_restriction}
f(T)_\pm=f_\pm(T)_\pm=f(T_\pm).
\end{equation}
The functions $f_\pm$ are the ones in \eqref{Eq_fpm}. The first two expressions $f(T)_\pm$ and $f_\pm(T)_\pm$ are understood as functional calculi of the operator $T$ in the whole space $V$, which are then restricted to the subspace $V_\pm$. On the other hand, $f(T_\pm)$ is the functional calculus of the restricted operator $T_\pm$, already in the subspace $V_\pm$.
\end{thm}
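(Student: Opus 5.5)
We prove the two equalities in \eqref{Eq_SFC_restriction} separately, first for $f\in\mathcal{N}^0(D_\theta)$ and then for $f\in\mathcal{N}^\poly(D_\theta)$ via regularizers.

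\emph{First equality.} By the product rule \cite[Theorem~5.7]{MS24}, $f(T)P_+\subseteq(f\mathds{1}_+)(T)=f_+(T)$, and by Corollary~\ref{cor_fTpm} we have $\dom f(T)_+=V_+\cap\dom f(T)$. For $v\in V_+\cap\dom f(T)$ this gives $f(T)v=f(T)P_+v=f_+(T)v$, so $f(T)_+\subseteq f_+(T)_+$. For the reverse inclusion take $v\in V_+\cap\dom f_+(T)$. Then $P_-v=0$ gives $v\in\dom f_-(T)$ with $f_-(T)v=f_-(T)P_-v\subseteq (f_-\mathds{1}_-)(T)P_-\ldots$; more precisely, $P_-f_-(T)\supseteq$ and $f_-(T)P_+\subseteq(f_-\mathds{1}_+)(T)=0$, so $v\in\dom f_-(T)$ with $f_-(T)v=0$. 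Linearity of the calculus then yields $f_+(T)v+f_-(T)v\subseteq f(T)v$, hence $v\in\dom f(T)$ and the two restrictions coincide.

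\emph{Second equality.} First let $f\in\mathcal{N}^0(D_\theta)$ and $v\in V_+$. By the resolvent decomposition \eqref{Eq_Resolvent_decomposition}, with $P_+v=v$ and $P_-v=0$,
\begin{equation*}
f_+(T)v=\frac{1}{2\pi}\int_{\partial D_\varphi\cap\mathbb{C}_J}(\mathcal{I}^Rs-T_+)^{-1}vf_+(s)ds_J=\frac{1}{2\pi}\int_{\partial S_\varphi\cap\mathbb{C}_J}(\mathcal{I}^Rs-T_+)^{-1}vf(s)ds_J .
\end{equation*}
This is exactly the sectorial $\omega$-calculus $f(T_+)v$, provided $T_+$ is sectorial. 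Sectoriality of $T_+$ follows from Theorem~\ref{thm_Spectrum_restriction}, which gives $\sigma_S(T_+)\subseteq\overline{S_\omega}$, together with the resolvent estimate inherited from \eqref{Eq_Resolvent_decomposition} and $\|(\mathcal{I}^Rs-T_+)^{-1}\|\le\|(\mathcal{I}^Rs-T)^{-1}\|$ on $V_+$. The estimate is needed for $s$ outside $S_\varphi$; in the left half-plane away from $-\overline{S_\omega}$ this is immediate, while on $-D$ it uses $\rho_S(T_+)$ together with the bounded calculus identity $(\mathcal{I}^Rs-T_+)^{-1}=r_s(T)\rest{V_+}$ with $r_s(q)=(s-q)^{-1}\mathds{1}_+(q)$ bounded by $C/|s|$. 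Injectivity of $T_+$ is inherited from $T$. Here, and below, one has to identify which function space $f\rest{S_\theta}$ lives in, namely $\mathcal{N}^0(S_\theta)$.

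\emph{Polynomially growing $f$.} Take the regularizer $e$ from Definition~\ref{defi_omega_Hinfty}~ii'). Its restriction to $S_\theta$ is an admissible sectorial regularizer, and $e(T_+)=e(T)\rest{V_+}$ is injective. By the $\mathcal{N}^0$ case,
\begin{equation*}
f(T_+)=e(T_+)^{-1}(ef)(T_+)=\big(e(T)\rest{V_+}\big)^{-1}(ef)(T)\rest{V_+},
\end{equation*}
and this equals $(e(T)^{-1}(ef)(T))\rest{V_+}$, because both $e(T)$ and $(ef)(T)$ map $V_+$ into $V_+$ by Corollary~\ref{cor_fTpm}.

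I expect the main obstacle to be the domain bookkeeping in these last steps: checking that $v\in V_+$ with $(ef)(T)v\in\ran e(T)$ actually has a preimage in $V_+$, which follows from applying $P_+$ and using the commutation relation. The uniform resolvent bound for $T_+$ near $-S_\omega$ is the other point that needs care.
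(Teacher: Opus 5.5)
Your proof is correct in outline, and it agrees with the paper on two of its three parts. The regularizer step is the same as the paper's, and your domain check (applying $P_+$ to obtain a preimage in $V_+$) is exactly what the paper's chain of equalities uses silently. The first equality also matches: the paper gets it in one line by restricting the commutation relation of Corollary~\ref{cor_fTpm}, which rests on the product-rule domain identity. Your detour through $f_-$ is unnecessary. Since $P_+$ is bounded, the product rule gives $f(T)P_+=f_+(T)$ with equal domains, so $v\in V_+\cap\dom f_+(T)$ immediately yields $v=P_+v\in\dom f(T)$. Your own inference ``$f_-(T)P_+\subseteq 0$, hence $v\in\dom f_-(T)$'' already depends on that identity.

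The second equality is where you take a genuinely different route.
\begin{itemize}
\item The paper reads $f(T_+)$ as the bisectorial calculus of $T_+$. For $f\in\mathcal{N}^0(D_\theta)$ and $v\in V_+$ it replaces $(\mathcal{I}^Rs-T)^{-1}v$ by $(\mathcal{I}^Rs-T_+)^{-1}v$ along all of $\partial D_\varphi$ and gets $f(T)v=f(T_+)v$ directly. The only property of $T_+$ this needs is bisectoriality, which comes for free: off $D_\varphi$ the resolvent of $T_+$ is the restriction of that of $T$.
\item You pass through $f_+$, so the left branch of the contour drops out, and identify the result with the sectorial calculus of $f\rest{S_\theta}$. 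This requires $T_+$ to be sectorial of angle $\omega$, i.e.\ a bound $C/|s|$ also on the left cone $-S_\varphi$. There $s$ may lie in $\sigma_S(T)$, so the resolvent decomposition is unavailable.
\end{itemize}
Your route costs more work, but it proves more: it justifies the introduction's claim that $T_\pm$ are genuinely sectorial, which the paper never verifies.

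As written, though, your estimate on the left cone is not valid. For $s\notin\mathbb{R}$ the function $r_s(q)=(s-q)^{-1}\mathds{1}_+(q)$ is not intrinsic, so $r_s(T)$ is undefined. You can repair this with $h_s:=Q_s^{-1}\mathds{1}_+$ from the proof of Theorem~\ref{thm_Spectrum_restriction}, together with $k_s(q):=q\,h_s(q)$. Then $Q_s[T_+]^{-1}=h_s(T)\rest{V_+}$, and by the product rule $T\,h_s(T)=k_s(T)$, so
\begin{equation*}
(\mathcal{I}^Rs-T_+)^{-1}v=\big(h_s(T)v\big)\overline{s}-k_s(T)v,\qquad v\in V_+.
\end{equation*}
Both $h_s$ and $k_s$ are intrinsic. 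For $s\in-\overline{S_\varphi}$ and $q\in S_\theta$, the angular gap $\pi-\varphi-\theta>0$ gives $\Vert h_s\Vert_\infty\lesssim|s|^{-2}$ and $\Vert k_s\Vert_\infty\lesssim|s|^{-1}$. Using $\Vert w\overline{s}\Vert=|s|\Vert w\Vert$ for paravectors, the bounded $H^\infty$-calculus then yields $\Vert(\mathcal{I}^Rs-T_+)^{-1}\Vert\leq C/|s|$ there. On the rest of $\mathbb{R}^{n+1}\setminus(S_\varphi\cup\{0\})$ the bound is inherited from $T$.
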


\begin{proof}
We will only prove the \grqq$+$\grqq-part of the statement, the \grqq$-$\grqq -part is analog. If we restrict the identity \eqref{Eq_fTpm} to the subspace $V_+$, we get
\begin{equation*}
f(T)\rest{V_+}=f_+(T)\rest{V_+}.
\end{equation*}
For the second equation in \eqref{Eq_SFC_restriction}, we first consider $f\in\mathcal{N}^0(D_\theta)$. Then, for every $v\in V_+$, it follows from \eqref{Eq_Resolvent_decomposition} that the $\omega$-functional calculus is given by
\begin{equation*}
f(T)v=\frac{1}{2\pi}\int_{\partial D_\varphi\cap\mathbb{C}_J}(\mathcal{I}^Rs-T)^{-1}vds_Jf(s)=\frac{1}{2\pi}\int_{\partial D_\varphi\cap\mathbb{C}_J}(\mathcal{I}^Rs-T_+)^{-1}vds_Jf(s)=f(T_+)v.
\end{equation*}
For the extension to functions $f\in\mathcal{N}^\poly(D_\theta)$, let $e\in\mathcal{N}^0(D_\theta)$ be a regularizer function according to Definition~\ref{defi_omega_Hinfty}~ii'). Then we get
\begin{equation*}
f(T_+)=e(T_+)^{-1}(ef)(T_+)=(e(T)\rest{V_+})^{-1}(ef)(T)\rest{V_+}=e(T)^{-1}(ef)(T)\rest{V_+}=f(T)\rest{V_+}. \qedhere
\end{equation*}
\end{proof}

\section{Projectors of the gradient}\label{sec_Gradient}

The main application of the projectors from Section~\ref{sec_Projectors} will be the gradient operator with non-constant coefficients in $n\geq 3$ dimensions, first considered in the article \cite{CMS24},
\begin{equation}\label{Eq_Gradient}
\nabla_a:=\sum_{i=1}^ne_ia_i(x)\frac{\partial}{\partial x_i},\qquad\text{with }\dom\nabla_a:=H^1(\mathbb{R}^n,\mathbb{R}_n).
\end{equation}
This is an operator in the Clifford module $L^2(\mathbb{R}^n,\mathbb{R}_n)$  of square integrable functions with values in $\mathbb{R}_n$. For the coefficients $a_1,\dots,a_n$, we assume one of the following two assumptions:

\begin{itemize}
\item[I)] If the functions $a_i$ depend on all variables $x_1,\dots,x_n$, we assume that
\begin{subequations}
\begin{align}
m_a:=&\min\limits_{i\in\{1,\dots,n\}}\inf\limits_{x\in\mathbb{R}^n}a_i(x)>0, \label{Eq_ma} \\
M_a:=&\Big(\sum\nolimits_{i=1}^n\Vert a_i\Vert_{L^\infty}^2\Big)^{\frac{1}{2}}<\infty, \label{Eq_Ma} \\
M_a':=&\Big(\sum\nolimits_{i,j=1}^n\Big\Vert a_j\frac{\partial a_i}{\partial x_j}\Big\Vert^2_{L^n}\Big)^{\frac{1}{2}}<\infty, \label{Eq_Maprime} \\
M_a'':=&\Big(\sum\nolimits_{i,j=1}^n\Big\Vert\frac{\partial a_i}{\partial x_j}\Big\Vert_{L^\infty}^2\Big)^{\frac{1}{2}}<\infty. \label{Eq_Maprimeprime}
\end{align}
\end{subequations}
Moreover, with the constant $C_S$ from the Sobolev embedding $H^1(\mathbb{R}^n)\subseteq L^{\frac{2n}{n-2}}(\mathbb{R}^n)$, we assume these upper bounds satisfy
\begin{equation*}
m_a^2>C_SM_a'.
\end{equation*}

\item[II)] If $a_i(x)=a_i(x_i)$ only depends on the variable $x_i$, for every $i\in\{1,\dots,n\}$, we only assume the bounds \eqref{Eq_ma}, \eqref{Eq_Ma}, and \eqref{Eq_Maprimeprime}.
\end{itemize}

\begin{thm}\label{thm_Bounded_FC_gradient}
The gradient $\nabla_a$ in \eqref{Eq_Gradient} is injective and bisectorial of angle $\omega\in(0,\frac{\pi}{2})$, where
\begin{equation*}
\omega:=\begin{cases} \arctan\Big(\sqrt{\frac{M_a^2}{m_a^2-C_SM_a'}-1}\Big), & \text{in case I)}, \\ \arctan\left(\sqrt{n-1}\right), & \text{in case II)}. \end{cases}
\end{equation*}
Moreover, $\nabla_a$ admits a bounded $H^\infty$-functional calculus.
\end{thm}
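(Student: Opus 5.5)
The plan is to reduce everything to Corollary~\ref{cor_T2_accretive}. For this we must verify three facts about $T=\nabla_a$: it is injective, it is bisectorial of the stated angle $\omega$, and $\nabla_a^2$ is $m$-accretive. Injectivity and bisectoriality with exactly this angle are the content of the earlier work on the gradient with nonconstant coefficients (\cite{CMS24} and \cite[Section~4]{CMS26}), where the same hypotheses I) and II) appear. I would quote those results. If needed, I would recall the mechanism: $\nabla_a^2$ is associated with a sesquilinear form $q(u,v)=\sum_{i,j}\langle a_j\partial_j u\,e_j,\ldots\rangle$. Injectivity follows because $\nabla_a u=0$ with $u\in H^1$ forces $\partial_iu=0$ for all $i$, since $a_i\geq m_a>0$ and the $e_i$-components separate. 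Hence $u$ is constant, and so $u=0$ in $L^2$.

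The core step is the $m$-accretivity of $\nabla_a^2$ in the sense of Definition~\ref{defi_Accretive}. For a) I would integrate by parts. For $u\in\dom\nabla_a^2$ we have $\nabla_a^2u=\sum_{i,j}e_ie_ja_i\partial_i(a_j\partial_ju)$, so
\begin{equation*}
\Sc\langle\nabla_a^2u,u\rangle=\Sc\langle\nabla_au,\nabla_a^*u\rangle,
\end{equation*}
where $\nabla_a^*u=\nabla_au+\sum_ie_i(\partial_ia_i)u$ up to sign. This gives the lower bound $\Vert\nabla_au\Vert^2$ minus the commutator term $\sum_{i,j}\Sc\langle e_ie_ja_j(\partial_ja_i)\partial_iu,u\rangle$. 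I would bound that term with Hölder in $L^n\times L^2\times L^{\frac{2n}{n-2}}$ together with the Sobolev constant $C_S$, which gives $\leq C_SM_a'\Vert\nabla u\Vert^2$. Since $\Vert\nabla_au\Vert^2\geq m_a^2\Vert\nabla u\Vert^2$, the condition $m_a^2>C_SM_a'$ in case I) yields $\Sc\langle\nabla_a^2u,u\rangle\geq(m_a^2-C_SM_a')\Vert\nabla u\Vert^2\geq0$. In case II) the mixed terms $i\neq j$ pair to zero by the anticommutation $e_ie_j=-e_je_i$ combined with $\partial_ja_i=0$ for $i\neq j$, so no smallness is needed. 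This is exactly the estimate behind the angle formulas, so the computation should be transcribable from \cite{CMS24}.

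For b), $\overline{\ran}(1+\nabla_a^2)=V$, I would use that $1+\nabla_a^2=Q_{J}[\nabla_a]$-type operator $\nabla_a^2-2s_0\nabla_a+|s|^2$ with $s=J\in\mathbb{S}$. Since $J$ lies on the imaginary axis outside $\overline{D_\omega}$, bisectoriality gives $J\in\rho_S(\nabla_a)$, so $Q_J[\nabla_a]=\nabla_a^2+1$ is even bijective. Then Corollary~\ref{cor_T2_accretive} yields the bounded $H^\infty$-functional calculus. I expect the main obstacle to be the accretivity estimate a): controlling the commutator terms from the nonconstant coefficients with the right constants and with care about Clifford-valued scalar parts. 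Should the equality $\dom\nabla_a^2$ versus form domain cause trouble, I would prove a) first on $C_c^\infty$ and extend by closedness.
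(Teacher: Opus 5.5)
Your plan is the same as the paper's. You quote \cite[Theorem~4.7]{CMS26} for injectivity and bisectoriality, prove that $\nabla_a^2$ is $m$-accretive, and conclude with Corollary~\ref{cor_T2_accretive}.

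\begin{itemize}
\item Your proof of b) via $1+\nabla_a^2=Q_J[\nabla_a]$ with $J\in\rho_S(\nabla_a)$ is correct. The paper uses the equivalent fact $-1\in\rho_S(\nabla_a^2)$.
\item In case I, your H\"older--Sobolev bound is the estimate $\Sc\langle\nabla_a^2v,v\rangle\geq(m_a^2-C_SM_a')\Vert\nabla v\Vert^2$ that the paper imports from \cite[Equation~(3.7)]{Gradient}. Cite it rather than rederive it. The step $\Vert\nabla_au\Vert^2\geq m_a^2\Vert\nabla u\Vert^2$ is not valid as stated for nonconstant coefficients: the cross terms $i\neq j$ cancel only after an integration by parts, which creates further first order terms. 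So your sketch would not reach the exact constant in the hypothesis.
\item Your fallback argument for injectivity is wrong. For $\mathbb{R}_n$-valued $u$ the terms $e_ia_i\partial_iu$ do not separate: $u=x_1-e_1e_2x_2$ satisfies $\sum_ie_i\partial_iu=0$ with $\partial_1u\neq0$. A global $L^2$ argument is needed.
\end{itemize}

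The genuine gap is a) in case II. The condition $\partial_ja_i=0$ for $i\neq j$ removes only the off-diagonal terms. The diagonal ones remain, $\nabla_a^2=-\sum_ia_i\partial_i(a_i\partial_i\,\cdot\,)$, and
\begin{equation*}
\Sc\langle\nabla_a^2u,u\rangle=\sum_{i=1}^n\Vert a_i\partial_iu\Vert^2+\frac{1}{2}\sum_{i=1}^n\int_{\mathbb{R}^n}a_ia_i'\,\partial_i|u|^2\,dx .
\end{equation*}
Nothing in II controls the last sum; in fact $M_a'=\infty$ unless all $a_i$ are constant. This cannot be repaired, because under II the operator $\nabla_a^2$ need not be accretive at all. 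In the variables $y_i=\int_0^{x_i}\frac{dt}{a_i(t)}$ one has $\nabla_a^2=-\Delta_y$ and $dx=a_1\cdots a_n\,dy$.
\begin{itemize}
\item Prescribe every $a_i=a$ as a function of $y_i$: $a(t)=1+t^2$ for $|t|\leq R$, frozen smoothly to a constant for $|t|>R$. In $x$ this satisfies \eqref{Eq_ma}, \eqref{Eq_Ma} and \eqref{Eq_Maprimeprime}, since $\frac{da}{dx_i}=\frac{1}{a}\frac{da}{dy_i}$ is bounded.
\item Take $u=\prod_ig(y_i)$ with $g(t)=e^{-t^2/8}$.
\item Then $\Sc\langle\nabla_a^2u,u\rangle=n\beta\big(\int_{\mathbb{R}}ag^2dt\big)^{n-1}$ with $\beta=\int_{\mathbb{R}}\big(a\dot g^2-\frac{1}{2}\ddot a g^2\big)dt$.
\item One computes $\beta=-\frac{\sqrt{\pi}}{4}$ up to an error exponentially small in $R$, so $\Sc\langle\nabla_a^2u,u\rangle<0$.
\end{itemize}
The paper's own case II step, $\Sc\langle\nabla^2v,a_1\cdots a_nv\rangle_{L^2_{dy}}\geq m_a^n\Sc\langle\nabla^2v,v\rangle_{L^2_{dy}}$, fails on the same example.

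What the change of variables does give is a similarity. Define $\Phi u:=u\circ x(\cdot)$. Then:
\begin{itemize}
\item $\Phi$ is a right linear isomorphism of $L^2(\mathbb{R}^n,\mathbb{R}_n)$, since $\Vert\Phi u\Vert^2=\int|u|^2\frac{dx}{a_1\cdots a_n}$.
\item $\Phi$ commutes with $\mathcal{I}^Rs$ and maps $H^1$ onto $H^1$.
\item $\Phi\nabla_a\Phi^{-1}=\nabla$, the flat gradient, which is injective and bisectorial with $m$-accretive square $-\Delta$.
\end{itemize}
Corollary~\ref{cor_T2_accretive} therefore applies to $\nabla$. From $(\mathcal{I}^Rs-\nabla_a)^{-1}=\Phi^{-1}(\mathcal{I}^Rs-\nabla)^{-1}\Phi$ one gets $f(\nabla_a)=\Phi^{-1}f(\nabla)\Phi$ for all $f\in\mathcal{N}^\poly(D_\theta)$. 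Hence $\Vert f(\nabla_a)\Vert\leq\Vert\Phi\Vert\Vert\Phi^{-1}\Vert C_\theta\Vert f\Vert_\infty$, which is the bounded $H^\infty$-functional calculus in case II, and it also gives injectivity there.
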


\begin{proof}
The fact that $\nabla_a$ is injective and bisectorial of angle $\omega$ is already proven in \cite[Theorem~4.7]{CMS26}. In order to show that $\nabla_a$ admits a bounded $H^\infty$-functional calculus, we will prove that the squared operator $\nabla_a^2$ is $m$-accretive and then use Corollary~\ref{cor_T2_accretive}. \medskip

For the condition a) of Definition~\ref{defi_Accretive}, we note that there is $\nabla_a^2=Q_0[\nabla_a]$. In case I), Using the weak formulation \cite[Equation~(1.8)]{Gradient}, we are able to write
\begin{equation*}
\langle\nabla_a^2v,v\rangle_{L^2}=q_0(v,v),\qquad v\in\dom\nabla_a^2.
\end{equation*}
Then, in \cite[Equation~(3.7)]{Gradient} it is proven that the scalar part of the form $q_0$ on the right hand side is bounded from below by
\begin{equation*}
\Sc\langle\nabla_a^2v,v\rangle_{L^2}\geq(m_a^2-C_SM_a')\Vert\nabla v\Vert_{L^2}^2\geq 0,\qquad v\in\dom\nabla_a^2.
\end{equation*}
In case II), we know from \cite[Theorem 4.3]{CMS26}, that the transformation
\begin{equation*}
y_i:=\int_0^{x_i}\frac{1}{a_i(t)}dt,\qquad x_i\in\mathbb{R},
\end{equation*}
induces a change of variable, such that
\begin{equation*}
a_i(x_i)\frac{\partial}{\partial x_i}=\frac{\partial}{\partial y_i}.
\end{equation*}
In particular, we have
\begin{equation*}
\nabla_a^2=-\sum_{i=1}^n\frac{\partial^2}{\partial y_i^2}=\nabla^2,
\end{equation*}
where in the second equality, the gradient with no coefficients $\nabla$ must be understood with respect to the variables $y_i$. For every $u\in\dom\nabla_a^2$, we now have
\begin{equation*}
\Sc\langle\nabla_a^2v,v\rangle_{L^2_{dx}}=\Sc\langle\nabla^2v,a_1\dots a_nv\rangle_{L^2_{dy}}\geq m_a^n\Sc\langle\nabla^2v,v\rangle_{L^2_{dy}}=m_a^n\Vert\nabla v\Vert_{L^2_{dy}}^2\geq 0.
\end{equation*}
For the condition b) of Definition~\ref{defi_Accretive}, we know by \cite[Proposition~3.26]{CMS26}, that $\nabla_a^2$ is sectorial of angle $2\omega$. This in particular means that $-1\in\rho_S(\nabla_a^2)$ and hence $\ran(1+\nabla_a^2)=L^2(\mathbb{R}^n,\mathbb{R}_n)$. \medskip

Now that we have verified that the operator $\nabla_a^2$ is $m$-accretive, the bounded $H^\infty$-functional calculus of $\nabla_a$ follows from Corollary~\ref{cor_T2_accretive}.
\end{proof}

In Theorem~~\ref{thm_Bounded_FC_gradient} we have proven that $\nabla_a$ is an injective, bisectorial operator which admits a bounded $H^\infty$-functional calculus. Hence, the projectors $P_\pm$ from Definition~\ref{defi_P}, as well as the corresponding subspaces $V_\pm$ in Definition~\ref{defi_V}, are well defined. We will now investigate how these objects look explicitly in the case of the gradient with constant coefficients $a_i(x)=a_i$. \medskip

The tool of choice will be the Fourier transform of Clifford valued functions. More precisely, we embed the Clifford algebra $\mathbb{R}_n$ into the complex Clifford algebra $\mathbb{C}_n$ by introducing an artificial imaginary unit $\mathbf{i}$, which commutes with all the imaginary units $e_1,\dots,e_n$ of the Clifford algebra $\mathbb{R}_n$. Then we define for every $u=\sum_{A\in\mathcal{A}}e_Au_A\in L^2(\mathbb{R}^n,\mathbb{C}_n)$, with $u_A\in L^2(\mathbb{R}^n,\mathbb{C})$, the Fourier transform componentwise as
\begin{equation*}
F[u](\xi):=\sum\limits_{A\in\mathcal{A}}e_AF[u_A](\xi) \in L^2(\mathbb{R}^n,\mathbb{C}_n),\qquad\xi\in\mathbb{R}^n.
\end{equation*}
Note that for the real Fourier transform, we use the convention
\begin{equation*}
F[u_A](\xi)=\frac{1}{(2\pi)^{\frac{n}{2}}}\int_{\mathbb{R}^n}e^{-\mathbf{i}\,x\cdot\xi}u_A(x)dx,\qquad u_A\in\mathcal{S}(\mathbb{R}^n,\mathbb{C}).
\end{equation*}
Let us start with the action of the projectors $P_\pm$ in Fourier space.

\begin{thm}\label{thm_Fourier_P}
The projectors $P_\pm$ of the gradient $\nabla_a$ with constant coefficients $a_i(x)=a_i>0$ admits the Fourier representation
\begin{equation*}
F[P_\pm u](\xi)=\frac{1}{2}\Big(1\pm\frac{\mathbf{i}\,\xi_a}{|\xi_a|}\Big)F[u](\xi),\qquad u\in L^2(\mathbb{R}^n,\mathbb{R}_n),\,\xi\in\mathbb{R}^n,
\end{equation*}
using the notion $\xi_a:=a_1\xi_1e_1+\dots+a_n\xi_ne_n$.
\end{thm}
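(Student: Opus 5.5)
We compute $P_+=\mathds{1}_+(\nabla_a)$ through a regularizing sequence, so that only the $\omega$-functional calculus of decaying functions appears, and we evaluate that calculus in Fourier space. For constant coefficients, $F[\nabla_a u](\xi)=\mathbf{i}\,\xi_aF[u](\xi)$. Here $\mathbf{i}\xi_a$ commutes with Clifford right multiplication. Moreover $(\mathbf{i}\xi_a)^2=|\xi_a|^2$, since $\xi_a^2=-|\xi_a|^2$.

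\textbf{Step 1 (resolvent in Fourier space).} For $s\in\rho_S(\nabla_a)$, we use the representation $(\mathcal{I}^Rs-T)^{-1}v=Q_s[T]^{-1}v\overline{s}-TQ_s[T]^{-1}v$ from Remark~\ref{rem_Gantner_equivalence}. Since $F[Q_s[\nabla_a]u](\xi)=(|\xi_a|^2-2s_0\mathbf{i}\xi_a+|s|^2)F[u](\xi)$, the resolvent acts in Fourier space by
\begin{equation*}
F[(\mathcal{I}^Rs-\nabla_a)^{-1}u](\xi)=\big(|\xi_a|^2-2s_0\mathbf{i}\xi_a+|s|^2\big)^{-1}\big(F[u](\xi)\overline{s}-\mathbf{i}\xi_aF[u](\xi)\big).
\end{equation*}

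\textbf{Step 2 (spectral decomposition of the symbol).} For fixed $\xi\neq0$, set $E_\pm:=\frac12(1\pm\mathbf{i}\xi_a/|\xi_a|)$. These are commuting idempotents in $\mathbb{C}_n$ with $E_++E_-=1$, $E_+E_-=0$ and $\mathbf{i}\xi_aE_\pm=\pm|\xi_a|E_\pm$. Acting on $E_\pm F[u](\xi)$, the symbol above reduces to the real-scalar expression $Q_s(\pm|\xi_a|)^{-1}\big(w\overline{s}\mp|\xi_a|w\big)$. This is exactly the complex resolvent at the real point $\pm|\xi_a|$, applied via right multiplication by $s$.

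\textbf{Step 3 (integration).} For $g\in\mathcal{N}^0(D_\theta)$, we insert Step 1 into Definition~\ref{defi_omega_Hinfty}~i'). We then interchange the contour integral with the Fourier transform, which is justified by the resolvent bounds and Fubini/Plancherel. Applying the slicewise Cauchy formula to the intrinsic function $g$ at the real points $\pm|\xi_a|$, with $J$ fixed and values in $\mathbb{C}_J$, gives
\begin{equation*}
F[g(\nabla_a)u](\xi)=\big(g(|\xi_a|)E_++g(-|\xi_a|)E_-\big)F[u](\xi).
\end{equation*}
The main obstacle is this step. The difficulty is that $s$ acts from the right while $\mathbf{i}\xi_a$ acts from the left, so the Cauchy formula must be checked carefully: the expression $Q_s(q)^{-1}(\overline{s}-q)$, as a function of $s\in\mathbb{C}_J$, equals the Cauchy kernel $(s-q)^{-1}$ for real $q$, and it multiplies from the right.

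\textbf{Step 4 (passage to the projector).} Let $r_n(s):=\frac{n^2s^2}{(1+n^2s^2)(1+s^2/n^2)}$ and put $g_n:=(\mathds{1}_+)r_n\in\mathcal{N}^0(D_\theta)$. Then $g_n\to\mathds{1}_+$ pointwise on $D_\theta$ and $\sup_n\Vert g_n\Vert_\infty<\infty$. By Theorem~\ref{thm_Bounded_FC_gradient} and the convergence lemma \cite[Lemma~4.1]{CMS25}, used in the same way as in the proof of Theorem~\ref{thm_Accretive_boundedFC}, we obtain $g_n(\nabla_a)u\to P_+u$. On the Fourier side, $g_n(|\xi_a|)E_+F[u]\to E_+F[u]$ in $L^2$ by dominated convergence; the null set $\xi=0$ is irrelevant. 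Plancherel then yields $F[P_+u]=E_+F[u]$, and the case of $P_-$ is identical.
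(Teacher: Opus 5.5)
Your proof is correct, and it takes a genuinely different route from the paper. The paper never touches the resolvent. Its steps are:
\begin{itemize}
\item it imports from \cite{CMS26} the identity $\sgn(\nabla_a)=\overline{q_{-1}(\nabla_a)p_1(\nabla_a)}=\overline{(-\Delta_a)^{-\frac{1}{2}}\nabla_a}$;
\item it uses \cite{S26} to identify the $H^\infty$-calculus of $(-\Delta_a)^{-\frac12}$ with the Borel calculus, i.e.\ with multiplication by $|\xi_a|^{-1}$;
\item it reads off the symbol $\mathbf{i}\,\xi_a/|\xi_a|$ on Schwartz functions and extends it by density, using the boundedness of $\sgn(\nabla_a)$ from Theorem~\ref{thm_Bounded_FC_gradient};
\item it concludes with $P_\pm=\frac12(1\pm\sgn(\nabla_a))$.
\end{itemize}

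You instead diagonalize the $S$-resolvent symbol with the idempotents $E_\pm$. The key observation is that $Q_s(q)^{-1}(\overline{s}-q)=(s-q)^{-1}$ for real $q$, with everything in $\mathbb{C}_J$ sitting to the right of $E_\pm F[u](\xi)$. This disposes of the left/right issue correctly. The slicewise Cauchy formula then shows that every $g(\nabla_a)$ with $g\in\mathcal{N}^0(D_\theta)$ is the multiplier $g(|\xi_a|)E_++g(-|\xi_a|)E_-$.

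What each route buys:
\begin{itemize}
\item The paper's argument is shorter given the cited machinery. It also produces the Fourier form of $\sgn(\nabla_a)$ that Theorem~\ref{thm_sgn_connection} needs.
\item Your argument is self-contained and needs neither \cite{CMS26} nor \cite{S26}. It gives the symbol of the whole calculus at once, and $\sgn(\nabla_a)$ follows the same way with $g_n=\sgn\cdot r_n$.
\item Your Step~3 also yields the uniform bound $\Vert g(\nabla_a)\Vert\le C\Vert g\Vert_\infty$ directly, without the accretivity argument of Section~\ref{sec_Accretive_operators}.
\end{itemize}

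Step~4 could be shortened further, avoiding the convergence lemma entirely:
\begin{itemize}
\item Take the regularizer $e(s)=\frac{s}{1+s^2}$. Step~3 gives the symbols $e(|\xi_a|)E_++e(-|\xi_a|)E_-$ of $e(\nabla_a)$ and $e(|\xi_a|)E_+$ of $(e\mathds{1}_+)(\nabla_a)$.
\item The function $F^{-1}[E_+F[u]]$ is $\mathbb{R}_n$-valued, because $\mathbf{i}$-conjugation maps $E_+(\xi)$ to $E_+(-\xi)$.
\item It is visibly a preimage of $(e\mathds{1}_+)(\nabla_a)u$ under the injective operator $e(\nabla_a)$, hence equals $e(\nabla_a)^{-1}(e\mathds{1}_+)(\nabla_a)u=P_+u$.
\end{itemize}
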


\begin{proof}
For every Schwartz function $u\in\mathcal{S}(\mathbb{R}^n,\mathbb{R}_n)$, the Fourier transform of the gradient is
\begin{equation}\label{Eq_Fourier_P_1}
F[\nabla_au](\xi)=\mathbf{i}\,\xi_aF[u](\xi),\qquad\xi\in\mathbb{R}^n.
\end{equation}
Let us now consider the self-adjoint Laplace operator with constant coefficients
\begin{equation*}
-\Delta_a:=-\sum\limits_{i=1}^na_i^2\frac{\partial^2}{\partial x_i^2},\qquad\dom(-\Delta_a)=H^2(\mathbb{R}^n,\mathbb{C}_n).
\end{equation*}
With the Borel functional calculus, we can now define the inverse square root $(-\Delta_a)^{-\frac{1}{2}}$. This operator acts in Fourier space like the multiplication with $|\xi_a|^{-1}$. Since \eqref{Eq_Fourier_P_1} shows that $|\xi_a|^{-1}F[\nabla_au](\xi)\in L^2(\mathbb{R}^n,\mathbb{C}_n)$, there is $\nabla_au\in\dom(-\Delta_a)^{-\frac{1}{2}}$, and there is
\begin{equation}\label{Eq_Fourier_P_2}
F\big[(-\Delta_a)^{-\frac{1}{2}}\nabla_au\big](\xi)=\frac{1}{|\xi_a|}F[\nabla_au](\xi)=\frac{\mathbf{i}\,\xi_a}{|\xi_a|}F[u](\xi),\qquad\xi\in\mathbb{R}^n.
\end{equation}
On the other hand, by \cite[Theorem~3.23, Remark~3.3, Theorem~3.27]{CMS26}, there is
\begin{equation}\label{Eq_Fourier_P_3}
\sgn(\nabla_a)=\overline{q_{-1}(\nabla_a)p_1(\nabla_a)}=\overline{(-\Delta_a)^{-\frac{1}{2}}\nabla_a}.
\end{equation}
Note that the fractional power $(-\Delta_a)^{-\frac{1}{2}}$ in \eqref{Eq_Fourier_P_2} is understood as the Borel functional calculus of self-adjoint operators, while $(-\Delta_a)^{-\frac{1}{2}}$ in \eqref{Eq_Fourier_P_3} is understood as the $H^\infty$-functional calculus of sectorial operators. However, it is shown in \cite{S26} that these two functional calculi coincide. Combining now \eqref{Eq_Fourier_P_2} and \eqref{Eq_Fourier_P_3}, we get the following Fourier representation of the sign function
\begin{equation*}
F[\sgn(\nabla_a)u](\xi)=\frac{\mathbf{i}\,\xi_a}{|\xi_a|}F[u](\xi),\qquad u\in\mathcal{S}(\mathbb{R}^n,\mathbb{R}_n).
\end{equation*}
Since both sides are bounded operators which coincide on the Schwartz space, which is dense in $L^2(\mathbb{R}^n,\mathbb{R}_n)$, these operators have to coincide everywhere
\begin{equation}\label{Eq_Fourier_P_4}
F[\sgn(\nabla_a)u](\xi)=\frac{\mathbf{i}\,\xi_a}{|\xi_a|}F[u](\xi),\qquad u\in L^2(\mathbb{R}^n,\mathbb{R}_n).
\end{equation}
Finally, since we can write $\mathds{1}_\pm(s)=\frac{1\pm\sgn(s)}{2}$, the operator $P_\pm=\frac{1\pm\sgn(\nabla_a)}{2}$ admits the Fourier representation
\begin{equation*}
F[P_\pm u](\xi)=\frac{F[u](\xi)\pm F[\sgn(\nabla_a)u](\xi)}{2}=\frac{1}{2}\Big(1\pm\frac{\mathbf{i}\,\xi_a}{|\xi_a|}\Big)F[u](\xi). \qedhere
\end{equation*}
\end{proof}

As a consequence of this representation of the projectors, we can also represent the sign function of the gradient in Fourier space. This sign function fulfils an important task, namely it is the connection between the two different approaches of fractional powers of the gradient in \cite{CMS26}. These two approaches are connected to the functions
\begin{equation}\label{Eq_palpha_qalpha}
p_\alpha(s):=\begin{cases} s^\alpha, & \Sc(s)>0, \\ -(-s)^\alpha, & \Sc(s)<0, \end{cases}\qquad\text{and}\qquad q_\alpha(s):=\begin{cases} s^\alpha, & \Sc(s)>0, \\ (-s)^\alpha, & \Sc(s)<0. \end{cases}
\end{equation}
In the special case of the gradient with constant coefficients, the operator $\sgn(\nabla_a)$ is even the connection between the novel approach of fractional power $p_\alpha(\nabla_a)$ and the classical approach $(-\Delta_a)^{\frac{\alpha}{2}}$.

\begin{thm}\label{thm_sgn_connection}
For every $\alpha\in\mathbb{R}$, the fractional powers $p_\alpha(\nabla_a)$ and $q_\alpha(\nabla_a)$ of the gradient \eqref{Eq_Gradient} are connected via
\begin{equation}\label{Eq_palpha_gradient}
p_\alpha(\nabla_a)=q_\alpha(\nabla_a)\sgn(\nabla_a).
\end{equation}
In particular, if the coefficients $a_i(x)=a_i>0$ are constant, we get
\begin{equation}\label{Eq_palpha_Delta}
p_\alpha(\nabla_a)=(-\Delta_a)^{\frac{\alpha}{2}}\sgn(\nabla_a),\qquad\text{with}\qquad\sgn(\nabla_a)u=\mathbf{i}\,F^{-1}\Big[\frac{\xi_a}{|\xi_a|}F[u](\xi)\Big].
\end{equation}
\end{thm}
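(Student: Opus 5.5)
The plan is to derive \eqref{Eq_palpha_gradient} from the product rule of the $H^\infty$-functional calculus \cite[Theorem~5.7]{MS24}, using that $\nabla_a$ has a bounded $H^\infty$-functional calculus by Theorem~\ref{thm_Bounded_FC_gradient}. On $D_\theta$ we have the pointwise identity $p_\alpha(s)=q_\alpha(s)\sgn(s)$, where $\sgn=\mathds{1}_+-\mathds{1}_-\in\mathcal{N}^\infty(D_\theta)$. Since the calculus is bounded, $\sgn(\nabla_a)\in\mathcal{B}(V)$ is everywhere defined. The product rule then gives
\begin{equation*}
q_\alpha(\nabla_a)\sgn(\nabla_a)\subseteq(q_\alpha\sgn)(\nabla_a)=p_\alpha(\nabla_a),\qquad\dom\big(q_\alpha(\nabla_a)\sgn(\nabla_a)\big)=\dom p_\alpha(\nabla_a)\cap\dom\sgn(\nabla_a)=\dom p_\alpha(\nabla_a).
\end{equation*}
The domain formula is the same one used in the proof of Corollary~\ref{cor_fTpm}, now with the bounded factor on the right. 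Because both sides have the same domain, the inclusion is in fact an equality.

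For extra certainty I would also check the domains directly, using that $\sgn(\nabla_a)^2=(\sgn^2)(\nabla_a)=1$. So $\sgn(\nabla_a)$ is a bounded involution. By Corollary~\ref{cor_fTpm} it commutes with $q_\alpha(\nabla_a)$, since $\sgn(\nabla_a)=P_+-P_-$. Hence $u\in\dom p_\alpha(\nabla_a)$ if and only if $\sgn(\nabla_a)u\in\dom q_\alpha(\nabla_a)$, and this is consistent with the domain formula above.

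For the constant-coefficient case, the remaining step is to identify $q_\alpha(\nabla_a)$ with $(-\Delta_a)^{\frac{\alpha}{2}}$. Note that $q_\alpha(s)=(s^2)^{\frac{\alpha}{2}}=\widehat{f}(s)$ with $f(q)=q^{\frac{\alpha}{2}}$ on $S_{2\theta}$. This holds because $s^2\in S_{2\theta}$ and the principal powers satisfy $(s^2)^{\frac{\alpha}{2}}=s^\alpha$ for $s\in S_\theta$ and $=(-s)^\alpha$ for $s\in-S_\theta$. Lemma~\ref{lem_FC_T2} then gives $q_\alpha(\nabla_a)=(\nabla_a^2)^{\frac{\alpha}{2}}$. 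For constant coefficients $\nabla_a^2=-\Delta_a$, which is visible from \eqref{Eq_Fourier_P_1}: $(\mathbf{i}\xi_a)^2=-\xi_a^2=|\xi_a|^2$. As in the proof of Theorem~\ref{thm_Fourier_P}, the sectorial $H^\infty$-calculus power agrees with the Borel one by \cite{S26}. Finally, the formula for $\sgn(\nabla_a)$ is exactly \eqref{Eq_Fourier_P_4}, rewritten with $F^{-1}$ and with the scalar $\mathbf{i}$ pulled out, since $\mathbf{i}$ commutes with everything.

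The main obstacle is the careful handling of domains in the product rule, in particular making sure the inclusion really is an equality for every $\alpha\in\mathbb{R}$, including negative $\alpha$ where the powers are unbounded near $0$. The product rule with a bounded right factor handles this uniformly in $\alpha$. A secondary point is justifying $\nabla_a^2=-\Delta_a$ as an equality of operators with domain $H^2$, which follows from Fourier multiplication.
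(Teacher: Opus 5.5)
Your proposal is correct, but for the main identity \eqref{Eq_palpha_gradient} it takes a different route from the paper.

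\textbf{Main identity.} The paper does not apply the product rule directly. It imports from \cite[Theorem~3.23 \& Theorem~3.27]{CMS26} the identity $p_\alpha(\nabla_a)=\overline{q_\alpha(\nabla_a)p_0(\nabla_a)}$. It then uses Theorem~\ref{thm_Bounded_FC_gradient} only to remove the closure: $p_0(\nabla_a)=\sgn(\nabla_a)$ is bounded and everywhere defined, so composing it on the right of the closed operator $q_\alpha(\nabla_a)$ already gives a closed operator.

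You instead apply the domain formula $\dom(f(T)g(T))=\dom(fg)(T)\cap\dom g(T)$ of the product rule \cite[Theorem~5.7]{MS24} with $g=\sgn$ everywhere defined. This turns the inclusion $q_\alpha(\nabla_a)\sgn(\nabla_a)\subseteq p_\alpha(\nabla_a)$ into an equality in one line, and it is exactly the mechanism already used in Corollary~\ref{cor_fTpm}.

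\textbf{What each route buys.} Your argument is self-contained within the present paper's toolkit and needs no closure argument. It also proves more: $f(T)\sgn(T)=(f\sgn)(T)$ for every $f\in\mathcal{N}^\poly(D_\theta)$ and every injective bisectorial $T$ with bounded $H^\infty$-functional calculus. The paper's route instead reuses an identity already available for fractional powers, with the boundedness of the calculus entering only at the very end.

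\textbf{Constant coefficients.} Here you follow the paper, but you supply two justifications it leaves implicit. You obtain $q_\alpha(\nabla_a)=(\nabla_a^2)^{\frac{\alpha}{2}}$ from Lemma~\ref{lem_FC_T2} with $f(q)=q^{\frac{\alpha}{2}}$, correctly checking that $\widehat{f}=q_\alpha$. You obtain $\nabla_a^2=-\Delta_a$ on $H^2$ from the Fourier symbol $(\mathbf{i}\,\xi_a)^2=|\xi_a|^2$.

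\textbf{Minor remark.} Your consistency check via $\sgn(\nabla_a)^2=1$ is harmless but not independent. The commutation from Corollary~\ref{cor_fTpm} only shows that $\dom q_\alpha(\nabla_a)$ is invariant under $\sgn(\nabla_a)$. Linking this to $\dom p_\alpha(\nabla_a)$ still requires the product-rule domain formula.
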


\begin{rem}
The operator $\sgn(\nabla_a)$ in \eqref{Eq_palpha_Delta} is known, in the Clifford setting, as weighted Hilbert transform, see \cite{Bernstein,QianYang2009Hilbert}. However, when applied to real valued functions, the operator $\sgn(\nabla_a)$ can be identified with a weighted Riesz transform if one identifies vectors in $\mathbb{R}^n$ with imaginary paravectors in $\mathbb{R}_n$.
\end{rem}

\begin{proof}[Proof of Theorem~\ref{thm_sgn_connection}]
In \cite[Theorem~3.23 \& Theorem~3.27]{CMS26} it is already proven that
\begin{equation}\label{Eq_palpha_gradient_1}
p_\alpha(\nabla_a)=\overline{q_\alpha(\nabla_a)p_0(\nabla_a)}.
\end{equation}
Since we know by Theorem~\ref{thm_Bounded_FC_gradient} that $\sgn(\nabla_a)$ is a bounded operator, the product with the closed operator $q_\alpha(\nabla_a)$ is closed again. This reduces \eqref{Eq_palpha_gradient_1} to the stated identity \eqref{Eq_palpha_gradient}. \medskip

In the case that $\nabla_a$ has constant coefficients, there is
\begin{equation}\label{Eq_qalpha_as_Laplace}
q_\alpha(\nabla_a)=(\nabla_a^2)^{\frac{\alpha}{2}}=(-\Delta_a)^{\frac{\alpha}{2}},
\end{equation}
where as stated in the discussion below \eqref{Eq_Fourier_P_3}, the $H^\infty$- and the Borel-functional calculus of $(-\Delta_a)^{\frac{\alpha}{2}}$ coincide. The Fourier representation of $\sgn(\nabla_a)$ is already shown in \eqref{Eq_Fourier_P_4}.
\end{proof}

The identity \eqref{Eq_palpha_Delta} now also allows us to identify the domains of the two versions $p_\alpha(\nabla_a)$ and $q_\alpha(\nabla_a)$ of fractional powers.

\begin{cor}
Let $\nabla_a$ be the gradient operator with constant coefficients $a_i(x)=a_i>0$. Then, for every $\alpha\in\mathbb{R}$, the domains of the fractional powers are given by
\begin{equation}\label{Eq_domain_equality}
\dom(p_\alpha(\nabla_a))=\dom(q_\alpha(\nabla_a))=\dom\big((-\Delta)^{\frac{\alpha}{2}}\big).
\end{equation}
In particular, the domain of $(-\Delta)^{\frac{\alpha}{2}}$ is in Fourier space given by
\begin{equation*}
\dom\big((-\Delta)^{\frac{\alpha}{2}}\big)=\Big\{u\in L^2(\mathbb{R}^n,\mathbb{R}_n)\;\Big|\;\int_{\mathbb{R}^n}|F[u](\xi)|^2|\xi|^{2\alpha}<\infty\Big\}.
\end{equation*}
In particular, for $\alpha\geq 0$, this domain coincides with the fractional Sobolev space $H^\alpha(\mathbb{R}^n)$.
\end{cor}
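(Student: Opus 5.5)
The identity \eqref{Eq_palpha_Delta}, $p_\alpha(\nabla_a)=(-\Delta_a)^{\frac{\alpha}{2}}\sgn(\nabla_a)$, reduces the corollary to two facts about the bounded sign operator. First, it is boundedly invertible. Second, multiplication by an isometric Fourier multiplier does not change whether $|\xi|^{\alpha}F[u]$ is square integrable.

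\emph{Step 1: $\sgn(\nabla_a)$ is a bijection.} Since $\sgn(s)^2=1$ on $D_\theta$, the product rule of the $H^\infty$-functional calculus, together with boundedness of $\sgn(\nabla_a)$ from Theorem~\ref{thm_Bounded_FC_gradient}, gives $\sgn(\nabla_a)^2=1$. Alternatively, this is read off from \eqref{Eq_Fourier_P_4}: $(\mathbf{i}\,\xi_a/|\xi_a|)^2=-\xi_a^2/|\xi_a|^2=1$, since $\xi_a^2=-|\xi_a|^2$ for a vector. Hence $\sgn(\nabla_a)$ is a bounded bijection of $L^2(\mathbb{R}^n,\mathbb{R}_n)$ onto itself. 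Note that the multiplier $\mathbf{i}\,\xi_a/|\xi_a|$ acts pointwise on $F[u](\xi)\in\mathbb{C}_n$ by left multiplication with a unit vector times $\mathbf{i}$. Such a left multiplication is isometric for the Clifford norm $|\cdot|$, because $|xy|=|x||y|$ holds when $x$ is a paravector. Consequently it preserves the pointwise norm $|F[u](\xi)|$ at almost every $\xi$, and it commutes with multiplication by the scalar $|\xi_a|^\alpha$.

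\emph{Step 2: the domains.} Since $\sgn(\nabla_a)$ is bounded and everywhere defined, \eqref{Eq_palpha_Delta} gives
\begin{equation*}
\dom p_\alpha(\nabla_a)=\big\{u\;\big|\;\sgn(\nabla_a)u\in\dom(-\Delta_a)^{\frac{\alpha}{2}}\big\}.
\end{equation*}
The Borel calculus (which agrees with the $H^\infty$ calculus, as noted after \eqref{Eq_Fourier_P_3}) characterizes $\dom(-\Delta_a)^{\frac{\alpha}{2}}$ by the condition $\int|\xi_a|^{2\alpha}|F[u]|^2d\xi<\infty$. By Step 1, $|F[\sgn(\nabla_a)u](\xi)|=|F[u](\xi)|$, so the condition holds for $\sgn(\nabla_a)u$ exactly when it holds for $u$. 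This gives $\dom p_\alpha(\nabla_a)=\dom(-\Delta_a)^{\frac{\alpha}{2}}=\dom q_\alpha(\nabla_a)$, the last equality by \eqref{Eq_qalpha_as_Laplace}.

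Next, because $\min_i a_i\,|\xi|\le|\xi_a|\le\max_i a_i\,|\xi|$, the weights $|\xi_a|^{2\alpha}$ and $|\xi|^{2\alpha}$ are comparable for every real $\alpha$. Hence $\dom(-\Delta_a)^{\frac{\alpha}{2}}=\dom(-\Delta)^{\frac{\alpha}{2}}$, and that domain is given by the stated Fourier formula.

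\emph{Step 3: the Sobolev space.} For $\alpha\ge0$, an $L^2$ function already has $F[u]\in L^2$. The condition $|\xi|^\alpha F[u]\in L^2$ is then equivalent to $(1+|\xi|^2)^{\alpha/2}F[u]\in L^2$, which is the definition of $H^\alpha$, applied componentwise to the $\mathbb{R}_n$-valued function.

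The main obstacle is Step 1: making rigorous that the Clifford-valued multiplier preserves pointwise norms on $\mathbb{C}_n$-valued functions, given that the norm used is the one on the complexified algebra. I would check this directly via $|e_{\xi}w|^2=\Sc(\overline{w}\,\overline{e_\xi}e_\xi w)=|w|^2$ for a unit vector $e_\xi$, with the factor $\mathbf{i}$ having modulus one. The rest is routine.
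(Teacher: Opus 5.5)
Your proof is correct, but it reaches the first equality by a different route from the paper.

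\textbf{The paper's route.} The paper does not use the Fourier symbol of $\sgn(\nabla_a)$ at this point. It writes $p_\alpha=\sgn\cdot q_\alpha$ and $q_\alpha=\sgn\cdot p_\alpha$. It then applies the domain formula of the product rule \cite[Theorem~5.7]{MS24}, namely $\dom(\sgn(\nabla_a)q_\alpha(\nabla_a))=\dom p_\alpha(\nabla_a)\cap\dom q_\alpha(\nabla_a)$. Boundedness of $\sgn(\nabla_a)$ makes the left side equal to $\dom q_\alpha(\nabla_a)$, which gives one inclusion; the symmetric argument gives the other. This uses only the bounded $H^\infty$-functional calculus. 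So $\dom p_\alpha(\nabla_a)=\dom q_\alpha(\nabla_a)$ holds for any injective bisectorial operator with bounded $H^\infty$-calculus, in particular for the variable-coefficient $\nabla_a$ of Theorem~\ref{thm_Bounded_FC_gradient}. Constant coefficients enter only when $q_\alpha(\nabla_a)$ is identified with $(-\Delta_a)^{\frac{\alpha}{2}}$.

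\textbf{Your route.} You go through the factorization \eqref{Eq_palpha_Delta} and the pointwise isometry of the symbol $\mathbf{i}\,\xi_a/|\xi_a|$. This is more concrete: it shows directly that $\sgn(\nabla_a)$ leaves every weighted $L^2$ condition $\int|\xi_a|^{2\alpha}|F[u]|^2d\xi<\infty$ invariant. The cost is that it depends on Theorem~\ref{thm_Fourier_P} and on the coincidence of the Borel and $H^\infty$ calculi, and it is tied to constant coefficients. Your explicit bound $\min_i a_i|\xi|\le|\xi_a|\le\max_i a_i|\xi|$ is a welcome justification of $\dom(-\Delta_a)^{\frac{\alpha}{2}}=\dom(-\Delta)^{\frac{\alpha}{2}}$, which the paper only asserts.

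\textbf{Two small remarks.}
\begin{enumerate}
\item The bijectivity in your Step 1 is never used; only the pointwise identity $|F[\sgn(\nabla_a)u](\xi)|=|F[u](\xi)|$ enters Step 2.
\item On $\mathbb{C}_n$, the expression $\Sc(\overline{w}w)$ with Clifford conjugation alone is not $|w|^2$: for $w=w_1+\mathbf{i}w_2$ it equals $|w_1|^2-|w_2|^2+2\mathbf{i}\,\Sc(\overline{w_1}w_2)$. You need the conjugation that also sends $\mathbf{i}$ to $-\mathbf{i}$. More simply, split $w=w_1+\mathbf{i}w_2$ with $w_1,w_2\in\mathbb{R}_n$, use $|e_\xi w_k|=|w_k|$ in $\mathbb{R}_n$, and note that multiplication by $\mathbf{i}$ just swaps the two parts up to sign.
\end{enumerate}
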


\begin{proof}
Using the connection $p_\alpha(s)=\sgn(s)q_\alpha(s)$ between the functions $p_\alpha$ and $q_\alpha$ in \eqref{Eq_palpha_qalpha}, as well as the connection between the domains in \cite[Theorem~5.7]{MS24}, we deduce
\begin{equation*}
\dom(\sgn(\nabla_a)q_\alpha(\nabla_a))=\dom(p_\alpha(\nabla_a))\cap\dom(q_\alpha(\nabla_a))\subseteq\dom(p_\alpha(\nabla_a)).
\end{equation*}
Moreover, since $\sgn(\nabla_a)$ is a bounded operator, we have
\begin{equation*}
\dom(\sgn(\nabla_a)q_\alpha(\nabla_a))=\dom(q_\alpha(\nabla_a)),
\end{equation*}
and combining these relations yields the inclusion
\begin{equation*}
\dom(q_\alpha(\nabla_a))\subseteq\dom(p_\alpha(\nabla_a)).
\end{equation*}
By symmetry, exchanging the roles of $p_\alpha$ and $q_\alpha$ gives the reverse inclusion, and therefore the domains coincide. For the second domain equality in \eqref{Eq_domain_equality}, we already know from \eqref{Eq_qalpha_as_Laplace} that $q_\alpha(\nabla_a)=(-\Delta_a)^{\frac{\alpha}{2}}$ acts as the Laplacian with constant coefficients. Since $-\Delta_a$ differs from the Laplacian $-\Delta $ only by finitely many positive constant coefficients $a_1,\dots,a_n>0$, the domains of their fractional powers are the same. This then leads to
\begin{equation*}
\dom(q_\alpha(\nabla_a))=\dom((-\Delta_a)^{\frac{\alpha}{2}})=\dom((-\Delta)^{\frac{\alpha}{2}}).
\end{equation*}
Finally, the explicit characterization of the domain of $(-\Delta)^{\frac{\alpha}{2}}$ follows by applying the Fourier transform.
\end{proof}

After we found a representation for the projectors $P_\pm$ of the gradient in Theorem~\ref{thm_Fourier_P}, we will now also characterize the range $V_\pm$ of these projectors.

\begin{thm}
Let $\nabla_a$ be the gradient with constant coefficients $a_i(x)=a_i>0$. Then, the subspaces $V_\pm$ from Definition~\ref{defi_V} admit the explicit representation
\begin{equation*}
V_\pm=\Big\{F^{-1}\Big[\Big(1\pm\frac{\mathbf{i}\,\xi_a}{|\xi_a|}\Big)v(\xi)\Big]\in L^2(\mathbb{R}^n,\mathbb{R}_n)\;\Big|\;v\in L^2(\mathbb{R}^n,\mathbb{C}_n)\Big\}.
\end{equation*}
Note that we only choose those $\mathbb{C}_n$-valued functions $v$, for which the inverse Fourier transform is $\mathbb{R}_n$-valued.
\end{thm}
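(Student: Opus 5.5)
The plan is to prove the two inclusions separately, using only the Fourier representation of $P_\pm$ from Theorem~\ref{thm_Fourier_P} and the fact that $V_\pm=\ran P_\pm$ (Definition~\ref{defi_V}). Write
\begin{equation*}
m_\pm(\xi):=\frac{1}{2}\Big(1\pm\frac{\mathbf{i}\,\xi_a}{|\xi_a|}\Big),
\end{equation*}
so that $F[P_\pm u]=m_\pm F[u]$. The key algebraic observation is $\xi_a^2=-|\xi_a|^2$, hence $(\mathbf{i}\,\xi_a/|\xi_a|)^2=1$. This gives $m_\pm^2=m_\pm$, consistent with Lemma~\ref{lem_P_properties}. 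It also shows that $m_\pm$ is a bounded multiplier for $\xi\neq 0$, with $|m_\pm|$ bounded uniformly in the $\mathbb{C}_n$ norm.

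For the inclusion $\subseteq$, take $w\in V_+$, so $w=P_+u$ for some $u\in L^2(\mathbb{R}^n,\mathbb{R}_n)$. Then $F[w]=(1+\mathbf{i}\,\xi_a/|\xi_a|)\,\tfrac12F[u]$, and we choose $v:=\tfrac12F[u]\in L^2(\mathbb{R}^n,\mathbb{C}_n)$ by Plancherel, applied componentwise. Since $w$ is $\mathbb{R}_n$-valued, $w$ belongs to the right-hand set.

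For the inclusion $\supseteq$, take $v\in L^2(\mathbb{R}^n,\mathbb{C}_n)$ such that $w:=F^{-1}[(1+\mathbf{i}\,\xi_a/|\xi_a|)v]$ is $\mathbb{R}_n$-valued. Then $w\in L^2(\mathbb{R}^n,\mathbb{R}_n)$, because the multiplier is bounded. Applying Theorem~\ref{thm_Fourier_P} to $w$ gives
\begin{equation*}
F[P_+w]=m_+\cdot 2m_+v=2m_+v=F[w].
\end{equation*}
Here we use that $m_+$ is a scalar-type multiplier: it is built from $1$, $\mathbf{i}$ and $\xi_a$, which commute with one another, and it acts by left multiplication. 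Hence $w=P_+w\in\ran P_+=V_+$. The minus case is identical with the signs changed.

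The main point requiring care is noncommutativity: $m_\pm$ acts by left multiplication on $F[u]$, and we must check that the idempotence $m_\pm^2=m_\pm$ holds pointwise as an identity in $\mathbb{C}_n$. This is immediate, since $\mathbf{i}$ is central, $\xi_a$ commutes with itself, and $\xi_a^2=-|\xi_a|^2$. Apart from that, we only need that the set $\{\xi_a=0\}=\{0\}$ has measure zero, which is harmless. Finally, the realness constraint on $v$ is built into the statement, so no characterization of which $v$ give real-valued $w$ is required.
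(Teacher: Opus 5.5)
Your proof is correct and is essentially the paper's argument. Both rely only on the Fourier representation of $P_\pm$ from Theorem~\ref{thm_Fourier_P} and the pointwise identity $X^2=1$ for $X:=\mathbf{i}\,\xi_a/|\xi_a|$; the paper merely packages it as $V_\pm=\{u\,|\,P_\pm u=u\}=\{u\,|\,(1\mp X)F[u]=0\}$ and turns this kernel into a range via $(1\mp X)(1\pm X)=0$ and $\tfrac{1}{2}\big((1+X)+(1-X)\big)=1$, which are exactly your idempotence $m_\pm^2=m_\pm$ and your choice $v=\tfrac{1}{2}F[u]$.
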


\begin{proof}
Since $P_\pm^2=P_\pm$ by Lemma~\ref{lem_P_properties}~a), the range of the projectors is given by
\begin{equation*}
V_\pm=\ran P_\pm=\big\{u\in L^2(\mathbb{R}^n,\mathbb{R}_n)\;\big|\;P_\pm u=u\big\}.
\end{equation*}
Next, by the Fourier representation of the projectors in Theorem~\ref{thm_Fourier_P}, we can rewrite
\begin{equation}\label{Eq_V_gradient_2}
V_\pm=\Big\{u\in L^2(\mathbb{R}^n,\mathbb{R}_n)\;\Big|\;\Big(1\mp\frac{\mathbf{i}\,\xi_a}{|\xi_a|}\Big)F[u](\xi)=0\Big\}.
\end{equation}
It follows immediately from the two basic identities
\begin{equation*}
\Big(1\mp\frac{\mathbf{i}\,\xi_a}{|\xi_a|}\Big)\Big(1\pm\frac{\mathbf{i}\,\xi_a}{|\xi_a|}\Big)=0,\qquad\text{and}\qquad\frac{1}{2}\bigg(\Big(1+\frac{\mathbf{i}\,\xi_a}{|\xi_a|}\Big)+\Big(1-\frac{\mathbf{i}\,\xi_a}{|\xi_a|}\Big)\bigg)=1,
\end{equation*}
that
\begin{equation*}
\Big(1\mp\frac{\mathbf{i}\,\xi_a}{|\xi_a|}\Big)F[u](\xi)=0\qquad\Leftrightarrow\qquad F[u](\xi)=\Big(1\pm\frac{\mathbf{i}\,\xi_a}{|\xi_a|}\Big)v(\xi),\quad\text{for some }v\in L^2(\mathbb{R}^n,\mathbb{C}_n).
\end{equation*}
Consequently, the kernel representation of $V_\pm$ in \eqref{Eq_V_gradient_2} can be rewritten as
\begin{align*}
V_\pm&=\Big\{u\in L^2(\mathbb{R}^n,\mathbb{R}_n)\;\Big|\;\exists v\in L^2(\mathbb{R}^n,\mathbb{C}_n):\;F[u](\xi)=\Big(1\pm\frac{\mathbf{i}\,\xi_a}{|\xi_a|}\Big)v(\xi)\Big\} \\
&=\Big\{F^{-1}\Big[\Big(1\pm\frac{\mathbf{i}\,\xi_a}{|\xi_a|}\Big)v(\xi)\Big]\in L^2(\mathbb{R}^n,\mathbb{R}_n)\;\Big|\;v\in L^2(\mathbb{R}^n,\mathbb{C}_n)\Big\}. \qedhere
\end{align*}
\end{proof}

\hspace{1cm}

\textbf{Acknowledgements.}
Francesco Mantovani is supported by MUR grant Dipartimento di Eccellenza 2023-2027.

\end{document}